\addspace\texttt{\mkbibbrackets{\thefield{arxivclass}}}}}}
\addspace\texttt{\mkbibbrackets{\thefield{arxivclass}}}}}}
\renewcommand\tableofcontents{%
	\@starttoc{toc}%
	
}
\newcounter{noteCounter}
\newcommand\shorttitle{Ricci-harmonic flow of $\G2$ and Spin(7)-structures} 
\newcommand\authors{S. Dwivedi} 
\newcounter{commentCounter}
	\ifodd\value{page}
\authors
\shorttitle
\newcommand*{\rom}[1]{\expandafter\@slowromancap\romannumeral #1@}
\newtheorem{theorem}{Theorem}[section]
\newtheorem{corollary}[theorem]{Corollary}
\newtheorem{lemma}[theorem]{Lemma}
\newtheorem{proposition}[theorem]{Proposition}
\newtheorem{question}[theorem]{Question}
\newtheorem{claim}[theorem]{Claim}
\theoremstyle{definition}
\newtheorem{definition}[theorem]{Definition}
\newtheorem{remark}[theorem]{Remark}
\newtheorem*{ack}{Acknowledgments}
\numberwithin{equation}{section}
\def\bR{\mathbb R}
\def\bO{\mathbb O}
\def\pt{\partial}
\def\del{\nabla}
\def\G2{\mathrm{G}_2}
\def\g2{\varphi}
\def\S7{\mathrm{Spin}(7)}
\def\s7{\Phi}
\def\ddt{\frac{d}{dt}}
\def\ptt{\frac{\partial}{\partial t}}
\def\sgn{\mathrm{sgn}}
\def\mhatl{\widehat{m}_l}
\def\ml{m_l}
\def\cF{\mathcal{F}}
\def\cL{\mathcal{L}}
\def\cT{\mathcal{T}}
\newcommand{\Vop}{\mathsf V}
\def\Spin7{\mathrm{Spin(7)}}
\def\Ric{\mathrm{Ric}}
\def\Riem{\mathrm{Rm}}
\def\Rm{\mathrm{Rm}}
\def\SO{\mathrm{SO}}
\def\dots7{\Dot{\Phi}}
\def\inj{\mathrm{inj}}
\DeclareMathOperator\Div{div}
\DeclareMathOperator\curl{curl}
\DeclareMathOperator\vol{vol}
\DeclareMathOperator\Vol{Vol}
\DeclareMathOperator\tr{tr}
\newcommand\xqed[1]{%
	\leavevmode\unskip\penalty9999 \hbox{}\nobreak\hfill
	\quad\hbox{#1}}
\newcommand\demo{\xqed{$\blacktriangle$}}
\begin{document}

	\title{Ricci-harmonic flow of $\mathrm{G_2}$ and Spin(7)-structures}
	\author{Shubham Dwivedi}
	\date{}
	
	\maketitle

	\begin{abstract}
	We introduce and study a new general flow  of $\G2$-structures which we call the Ricci-harmonic flow of $\G2$-structures. The flow is the coupling of the Ricci flow of underlying metrics and the isometric flow of $\G2$-structures, but we also provide explicit lower order in the torsion terms. The lower order terms and the flow are obtained by analyzing the second order term in the Taylor series expansion of $\G2$-structures in normal coordinates. As such, the Ricci-harmonic flow described in the paper can be interpreted as the "heat equation" for $\G2$-structures. The lower order terms allow us to prove that the stationary points of the Ricci-harmonic flow are exactly torsion-free $\G2$-structures on compact manifolds. We study various analytic and geometric properties of the flow. We show that the flow has short-time existence and uniqueness on compact manifolds starting with an arbitrary $\G2$-structure and prove global Shi-type estimates. We also prove a modified local Shi-type estimates for the flow which assume bounds on the initial derivatives of the Riemann curvature tensor and the torsion but give uniform bounds on these quantities for all times. We prove a compactness theorem for the solutions of the flow and use it to prove that the Ricci-harmonic flow exists as long as the velocity of the flow remains bounded. We also study Ricci-harmonic solitons where we prove that there are no compact expanding solitons and the only steady solitons are torsion-free. We derive an analog of Hamilton's identity for gradient Ricci-harmonic solitons and prove some integral identities for the solitons. Finally, we prove a version of the Taylor series expansion for Spin(7)-structures and use it to derive the Ricci-harmonic flow of Spin(7)-structures.  
	\end{abstract}
	
	\begin{adjustwidth}{0.95cm}{0.95cm}
	 \tableofcontents
	\end{adjustwidth}
	
	\let\thefootnote\relax\footnotetext{\emph{MSC (2020): 53E99, 53C29, 53C21, 53C15.}}

	\section{Introduction}\label{sec:intro}

The search for ``best'' geometric structures on a manifold gives rise to interesting problems which are both geometric and analytic in nature. Over the past few decades, use of geometric flows and related theory of parabolic partial differential equations has been very successful in providing solutions to longstanding problems as well as generating new powerful machinery and theories. Perhaps, the most well-known intrinsic flow of geometric structures is the Ricci flow of Riemannian metrics which was introduced by Richard Hamilton in \cite{hamilton-3manifolds} as a means to solve the Poincaré conjecture. Hamilton developed new tools to study the Ricci flow and proved the Poincaré conjecture for $3$-manifolds which admit metrics of positive Ricci curvature. Hamilton also established a roadmap for proving the conjecture in full generality which was finally achieved in the breakthroughs of Perelman \cite{perelman-1}. Hamilton's ideas and philosophies in proposing the Ricci flow and the reults therein are a major motivation for us in this paper.

\medskip

The geometric structures we are interested in this paper are $\G2$-structures on seven dimensional manifolds and Spin(7)-structures on eight dimensional manifolds. Both these structures have origins in the theory of the non-associative real normed division algebra of the octonions $\mathbb{O}$. A $\G2$-structure on a $7$-manifold $M$ is the reduction of the frame bundle $Fr(M)$ from the group $\mathrm{GL}(7, \bR)$ to the Lie group $\G2$ which can be described as the automorphism group of $\mathbb{O}$. A $\G2$-structure on $M$ induces a Riemannian metric and an orientation. From the point of view of differential geometry, a $\G2$-structure is a \emph{positive} (or nondegenerate) $3$-form which we denote by $\g2$. The subclass of torsion-free $\G2$-structures are those $\g2$s which are parallel with respect to the Levi-Civita connection of the induced metric. The \emph{holonomy} of the metric of a torsion-free $\G2$-structure is contained in the exceptional Lie group $\G2$ which is one of the groups that appears on the Berger's list of possible Riemannian holonomy groups. Such metrics are also Ricci-flat which make their study very valuable and interesting. The existence of torsion-free $\G2$-structures on a manifold is a challenging problem as one needs to solve a highly nonlinear PDE. Given the success of the Ricci flow of metrics,  one hopes that a suitable flow of $\G2$-structures might help in proving the existence of torsion-free $\G2$-structures. Ideally, one would like to start with an arbitrary $\G2$-structure and hope that the flow (or some normalization of it) will exist for all time and converge to a torsion-free $\G2$-structure as long as the manifold satisfies required topological conditions. 

\medskip

There has been a lot of work in that direction with various different flow of $\G2$-structures each with its own motivation and applicability. The first such flow was the Laplacian flow of closed $\G2$-structures proposed by Bryant \cite{bryant-remarks}. Various foundation results for the flow have been established by Bryant--Xu \cite{bryant-xu} and Lotay--Wei \cite{lotay-wei-gafa, lotay-wei-cag, lotay-wei-jdg}. Inspired by the Laplacian flow, the Laplacian co-flow of co-closed $\G2$-structures was defined by Karigiannis--McKay--Tsui \cite{kmt} which is not yet known to have a short-time existence result, an issue which was modified by Grigorian who proposed the modified Laplacian co-flow of co-closed $\G2$-structures which is well-posed on compact manifolds \cite{grigorian-modified}. The theory of general flows of $\G2$-structures, i.e., flows which do not impose any \emph{a priori} conditions on the evolving $\G2$-structures was initiated by Karigiannis \cite{flows1}. The study was continued in \cite{dgk-flows} where the authors classified \emph{all} possible second order differential invariants of a $\G2$-structure which can be evolved as a $3$-form. As a result of the general theory, the authors proved a very general short-time existence and uniqueness for solutions of vast family of flows of $\G2$-structures. A general flow of $\G2$-structures which is the negative gradient flow of the natural energy functional $\g2 \mapsto$ constant$\cdot \int_M ||\del \g2||^2 \vol$, was proposed by Weiss--Witt \cite{weiss-witt} where the authors proved short-time existence and uniqueness of solutions as well as stability of torsion-free $\G2$-structures along the flow.
	
	\medskip

In this paper, we propose a new general flow of $\G2$-structures which is inspired by Hamilton's methodology for proposing the Ricci flow. In order to explain the flow, we briefly describe the setup for flows of $\G2$-structures. More details can be found in \textsection~\ref{sec:prelims}. Any flow of a family $\g2(t)$ of $\G2$-structures  is described by the evolution equation
\begin{align}\label{eq:genf1}
\ptt \g2(t)=h(t)\diamond_t\g2(t) + X(t)\lrcorner \psi(t)
\end{align}
where $h(t)$ is a family of symmetric $2$-tensors on $M$, $X(t)$ is a family of vector fields on $M$, both of which are usually second order operators in $\g2(t)$ and $\diamond_t$ is a map which takes symmetric $2$-tensors on $M$ to $3$-forms lying in $\Omega_{1\oplus 27}$ component (see \textsection~\ref{sec:prelims} for the definition of the $\diamond$ operator). Here $\psi(t)=*_t\g2(t)$. Under the flow \eqref{eq:genf1}, the underlying metric evolves by $\ptt g(t)=2h(t)$, see \cite{flows1}.

\medskip 

If  a family $\g2(t)$ of $\G2$-structures is evolving by $\ptt \g2(t)=-\Ric(t) \diamond_t \g2(t)$ then the underlying metrics evolve by $\ptt g=-2\Ric$ which is the Ricci flow of metrics. We call the former flow as \emph{Ricci flow of} $\G2$-\emph{structures}. Given the enormous success of the Ricci flow in solving outstanding problems in geometry and topology it seems reasonable to study flows of $\G2$-structures whose underlying metric are evolving by the Ricci flow. If, however, we simply evolve $\g2(t)$ by the Ricci flow of $\G2$-structures then the flow is neglecting possible evolutions and degenerations of the $\G2$-structures governed by the vector fields which correspond to the deformations of $\g2(t)$ in the $\Omega^3_7$ direction (see \textsection~\ref{sec:prelims} for the decomposition of $3$-forms). 	
	
	\medskip
	
A proposal for studying the ``best'' flow of $\G2$-structures with deformations only in the $\Omega^3_7$	direction was put forward in \cite{dgk-isometric}, \cite{grigorian-isometric} and \cite{loubeau-saearp}. We note that the deformations in $\Omega^3_7$ direction do not change the underlying metric. The \emph{isometric} or \emph{harmonic} flow of $\G2$-structures defined in the above works is the flow of $\G2$-structures on a compact $(M^7, \g2_0)$ given by 
\begin{align}\label{eq:isometricflow}
	\frac{\pt \g2}{\pt t} = \Div T\lrcorner \psi, \ \ \ \g2(0)=\g2_0,
\end{align}
where $T$ is the torsion of the $\G2$-structure and is a $2$-tensor and $\Div T$ is the vector field defined by $(\Div T)_{i}=\del_jT_{ji}$. The harmonic flow \eqref{eq:isometricflow} is the negative gradient of the natural energy functional
\begin{align*}
	E(\g2)=\frac 12\int_M |T_{\g2}|^2_{g_{\g2}} \vol_{\g2},
\end{align*}
when restricted to the \emph{isometry class} $[[\g2_0]]$, i.e., to the space of those $\G2$-structures all of which induce the same Riemannian metric as $\g2_0$ (see \cite[\textsection 2]{dgk-isometric} for more details). In this way, the harmonic (or isometric) flow finds the ``best'' $\G2$-structure in the isometry class of the initial $\G2$-structure. 

\medskip

Since the Ricci flow is the best way to deform a family of metrics and the harmonic/isometric flow of $\G2$-structures is the most optimum way of deforming a $\G2$-structure in an isometry class, it is reasonable to attempt to define a flow of $\G2$-structures which is a ``coupling'' of both the flows. This suggests the following flow of $\G2$-structures 
\begin{align*}
	\frac{\partial \g2}{\partial t} = -\Ric \diamond \g2 + \Div T\lrcorner \psi.
\end{align*}

While the above flow of $\G2$-structures is well-posed on compact manifolds (see Theorem~\ref{thm:RHFste}), it doesn't take into account any lower-order terms in the $\G2$-structure, of which there can be possibly many (see \cite[\textsection 5]{dgk-flows} for some of the possible lower-order terms which could appear in a flow of $\G2$-structures). These lower order terms can change the analytic and geometric behaviour of the flow. So, on the one hand, it would be desirable to have lower order terms in the definition of the geometric flow which couples the Ricci flow of the metric and the harmonic flow of $\G2$-structures up to the highest order, on the other hand, we would like those terms to appear ``naturally'' or in a "geometric" setting. In fact, this was a question asked in \cite{dgk-flows} that can we choose suitable lower order terms in the coupling of the Ricci flow and the harmonic flow which makes the evolution equations of torsion and the curvature "nicer". 

\medskip

We remedy this situation by proposing a general flow of $\G2$-structures, where general refers to the fact that we put no extra conditions on the family of $\G2$-structures, which up to the highest order is the coupled Ricci flow and the harmonic flow of $\G2$-structures but with explicit lower order terms which are geometrically motivated.  We propose the following flow.

\begin{definition}\label{heatflowdefn}
	Let $(M^7, \g2_0)$ be a compact manifold with a $\G2$-structure $\g2_0$. The \emph{Ricci-harmonic flow} for the family of $\G2$-structures $\g2(t)$ is the following initial value problem
	\begin{align} 
		\label{ricci-harmonic flow} 
		\left\{\begin{array}{rl} 
			& \dfrac{\pt \g2}{\pt t} = \left(-\Ric+3 T^tT -|T|^2g \right) \diamond \g2 + \Div T\lrcorner \psi, \\
			& \g2(0) =\g2_0.
		\end{array}\right.
	\end{align}
Here $T^t$ denotes the transpose of the torsion $2$-tensor and $\Div T$ is the divergence of the torsion.
\end{definition}
The choice of explicit lower order terms in \eqref{ricci-harmonic flow} is motivated in \textsection~\ref{sec:motivation} where these terms are shown to be contractions of the second order terms in the Taylor series expansion of a $\G2$-structure in $\G2$-adapted normal coordinates. As mentioned before, this is based on Hamilton's philosophy of proposing the Ricci flow as the flow of metrics along the term which is obtained by contracting the second order terms in the Taylor series expansion of the metric in normal coordinates. In this way, we can view the Ricci-harmonic flow \eqref{ricci-harmonic flow} as the heat equation for $\G2$-structures and an analog of the Ricci flow of metrics. We explain this viewpoint in detail in \textsection~\ref{sec:motivation}.

\medskip

We prove in \textsection~\ref{sec:motivation} that the stationary points of the Ricci-harmonic flow on compact manifolds are exactly torsion-free $\G2$-structures which also reinforces the natural appearance of our lower order terms. We prove the short-time existence and uniqueness of solutions of the flow in Theorem~\ref{thm:RHFste} as an application of the general theorem proved by the author with P. Gianniotis and S. Karigiannis in \cite{dgk-flows}. As further evidence for \eqref{ricci-harmonic flow} being an analog for the Ricci flow of metrics, we use the ideas in \cite{panos-georgeG2Hilbert} to show that the Ricci-harmonic flow cannot be the gradient flow of any diffeomorphism invariant functional of $\G2$-structures which is similar to the Ricci flow of metrics. Similarly, we prove in Proposition~\ref{prop:RHFonNG2} that \emph{nearly} $\G2$-structures, whose underlying metrics are positive Einstein, are \emph{shrinking solutions} to the flow which is precisely analogous to the behaviour of positive Einstein metrics along the Ricci flow. 

\medskip
We develop foundational theory for the Ricci-harmonic flow in \textsection~\ref{sec:evoleqns} and \ref{sec:ltecompact}. We prove both global Shi-type estimates in Theorem~\ref{thm:shiest} and local Shi-type estimates (which we remark, also follow from the work of Gao Chen \cite{gaochen-shi}). We then prove a modified local Shi-type estimates for the Ricci-harmonic flow in Theorem~\ref{thm:strlocder} which has a stronger assumptions of bounded derivatives of the Riemann curvature tensor and the torsion of the \emph{initial} $\G2$-structure but also gives stronger bounds on all order derivatives of the curvature and tensor at all later times. A consequence of these stronger estimates is a uniform bound on derivatives of curvature and the torsion tensor in geodesic balls along the Ricci-harmonic flow even close to the initial time. These modified local Shi-type estimates are based on similar ideas of Peng Lu for the Ricci flow and this is the first instance of such estimates being proved for flows of $\G2$-structures. Such estimates are used, for instance, in the proof of existence of standard solutions of the Ricci flow on $\bR^3$ and also in Ricci flow with surgery (see \cite{morgan-tian}). A long term goal would be to emulate possible approaches and results for the Ricci-harmonic flow of $\G2$-structures.

\medskip
In \textsection~\ref{sec:ltecompact}, we study the criterion for long-time existence of the Ricci-harmonic flow. We prove in Theorem~\ref{thm:lte1} that the Ricci-harmonic flow will exist on a compact manifold as long as 
\begin{align}\label{eq:lambintro}
\underset{(x,t)\in M\times [0,\tau)}{\text{sup}}\ \Lambda(x,t) = \underset{(x,t)\in M\times [0,\tau)}{\text{sup}}\ \left(|\Rm(x,t)|^2+|\del T(x,t)|^2+|T(x,t)|^4\right)^{\frac 12},
\end{align}
remains bounded. We follow this by proving a compactness theorem for solution of the flow in Theorem~\ref{thm:cmptthmflows}. We prove that for a sequence of pointed solutions to the Ricci-harmonic flow if the quantity $\Lambda$ defined in \eqref{eq:lambintro} is uniformly bounded and there is a uniform injectivity radius lower bound away from $0$ then there exists a subsequence which converges in the Cheeger--Gromov sense to a solution of the Ricci-harmonic flow. Using our modified local Shi-estimates Theorem~\ref{thm:strlocder} we prove a stronger version of the compactness theorem in Theorem~\ref{thm:strccompactnessthm}. These results, while interesting in their own right, are very useful in the analysis of the singularities of the flow. Our hope is that the stronger regularity results and the compactness theorem will be used in the same way as it has been used for the analysis of the solutions and singularities of the Ricci flow. As another application of our characterization of the singular time and the compactness theorem, we prove in Theorem~\ref{thm:velbound} that a solution to the Ricci-harmonic flow will keep on existing as long as the velocity of the flow, i.e., $|\Ric|+|T|^2+|\Div T|$ remains bounded. This is done by following the methods of Šešum \cite{sesum} for the Ricci flow case and those of Lotay--Wei \cite{lotay-wei-gafa} for the Laplacian flow of closed $\G2$-structures case. 

\medskip

Section \textsection~\ref{sec:solitons} studies solitons or self-similar solutions of the Ricci-harmonic flow. We prove in Proposition~\ref{prop:solmainprop} that there are no compact expanding solitons of the Ricci-harmonic flow and the only compact steady solitons for the flow are torsion-free $\G2$-structures. We then prove various identities and properties of solitons of the flow. Among other things, we prove an identity for compact \emph{gradient} Ricci-harmonic solitons in \eqref{eq:soliden2} which is the analog of Hamilton's identity for gradient Ricci solitons which has far-reaching applications in the study of Ricci solitons. We expect similar applications from \eqref{eq:soliden2}. We give a criterion for a Ricci-harmonic soliton to be trivial in Corollary~\ref{cor:soltrivial}. Finally, we prove an integral formula for the potential function of a gradient Ricci-harmonic soliton in Lemma~\ref{lem:intformsol}.

\medskip

There are special geometric structures on manifolds in dimension $8$ which are also related to $\bO$ and these are called Spin(7)-structures. These are prescribed by a $4$-form $\Phi$ on $M$ and the existence of such a $\Phi$ is a purely topological condition. A Spin(7)-structure $\Phi$ on $M$ induces a Riemannian metric $g_{\Phi}$ and an orientation and $\Phi$ is {\emph{torsion-free}} if $\del \Phi=0$, where $\del$ is the Levi-Civita connection of $g_{\Phi}$. Thus, similar to the $\G2$-case, the existence of a torsion-free Spin(7)-structure is obtained by solving a highly nonlinear PDE so one approach to find such a structure is by the means of a suitable geometric flow. Unlike the $\G2$-case, there have not been many proposed flows for Spin(7)-structures. A flow of isometric Spin(7)-structures, called the \emph{harmonic flow of Spin(7)-structures} was proposed in \cite{dle-isometric} and was shown to have various nice analytic and geometric properties. The harmonic flow was the negative gradient flow of the $L^2$-norm of the torsion functional, $\Phi \mapsto \frac 12 \int_M |T_{\Phi}|^2\vol$ with $T$ being the torsion of $\Phi$, when \emph{restricted to an isometry class} $[[\Phi_0]]_{\text{iso}}$. A flow where the metric is also varying was proposed by the author in \cite{dwivedi-spin7} where the flow was again the negative gradient flow of the $L^2$-norm of the torsion functional but now the Spin(7)-structures were allowed to vary over the space of \emph{all Spin(7)-structures.} Other second order flows of Spin(7)-structures were studied by Krasnov in \cite{krasnov-spin7}.

\medskip 

Motivated by the idea to obtain the Ricci-harmonic flow for $\G2$-structures, we first prove a Taylor series expansion for a Spin(7)-structure $\Phi$ in Theorem~\ref{thm:taylorspin7}. This result, although a straightforward emulation of the result for $\G2$-structures, is new and allows us to define the natural coupling of the Ricci flow and the harmonic flow of Spin(7)-structures with explicit lower order terms. We compute the Laplacian of the components of $\Phi$ by contracting the expression in the second order term of the Taylor series expansion \eqref{eq:taylorspin7} and get the Ricci-harmonic flow of Spin(7)-structures which is given in Definition~\ref{defn:rhfspin7}. The short-time existence and uniqueness of solutions to the flow is proved in Theorem~\ref{thm:srhfste} and essentially follows from the analysis of the principal symbols of the operators $\Ric_{\Phi}$ and $\Div_{\Phi}T_{\Phi}$ developed in an earlier work of the author \cite{dwivedi-spin7}. Finally in \textsection~\ref{sec:questions} we mention some problems for future directions. The methods in this paper are quite general and will work for the corresponding "Ricci-harmonic flow" with lower order terms obtained in the manner we describe here for any $H$-structures. For example, these methods provide natural flows of $\mathrm{SU}(2)$-structures (some of which were studied in \cite{fowdar-saearp}) and will give flows of $\mathrm{SU}(3)$-structures. These will be addressed in future works. 

\medskip 

We have also included an Appendix~\ref{sec:appendix} where we prove the Taylor series expansion of the dual $4$-form $\psi$ and follow the methods in \textsection~\ref{sec:motivation} to write the potential "heat equation" for $4$-forms which we hope will shed light on flows of $4$-forms.

\medskip
 
Another proposal for a general flow of $\G2$-structures with behaviour similar to the Ricci flow of metrics is described in the recent work for Gianniotis--Zacharopoulos \cite{panos-georgeG2Hilbert}. The flow in that paper is based on the variation of a modified Einstein-Hilbert-functional \cite[eq. (4.2)]{panos-georgeG2Hilbert} which the authors introduce and which is more suited for $\G2$-structures by taking into account the torsion of the $\G2$-structure. The flow in \eqref{ricci-harmonic flow} is different from the flows studied in \cite{panos-georgeG2Hilbert}, although, torsion-free $\G2$-structures are stationary points in both the cases.

	\medskip
	
\begin{ack}
We would like to thank Panagiotis Gianniotis, Spiro Karigiannis, Ragini Singhal and Thomas Walpuski for helpful discussions.
\end{ack}
	
	
	\section{Preliminaries on $\G2$ and Spin(7)-structures}\label{sec:prelims}
	
In this section, we briefly recall some preliminaries on $\G2$ and Spin(7)-structures and set up notations for the rest of the paper.

Throughout the paper, we compute in a local orthonormal frame, so all indices are subscripts and any repeated indices are summed over all values from $1$ to $7$ in the $\G2$-case and $1$ to $8$ in the Spin(7)-case. We have the Ricci identity which, for instance, for a $2$-tensor $S$ reads as
\begin{align}\label{eq:ricciiden}
	\del_i\del_jS_{kl}-\del_j\del_iS_{kl}=-R_{ijkm}S_{ml}-R_{ijlm}S_{km}. 
\end{align}

Let $M^7$ be an oriented smooth manifold. We say that $\g2\in \Omega^3(M)$ is a $\G2$-structure if it is nondegenerate, which means that it determines a Riemannian metric $g_\g2$ and an orientation which is given by the formula
\begin{align}\label{eq:metricfromphi}
(X\lrcorner \g2)\wedge (Y\lrcorner \g2)\wedge \g2 = -6 g_{\g2}(X,Y)\vol_{\g2},\ \ \ X,\ Y\in \Gamma(TM).
\end{align}
The metric and orientation determines the Hodge star operator and we denote by $\psi=*_{\g2}\g2$. The space of differential forms decompose further into irreducible $\G2$-representation. We have the following orthogonal decomposition with respect to $g$,
\begin{align*}
\Omega^2=\Omega^2_7\oplus \Omega^2_{14}\ \ \ \ \ \text{and}\ \ \ \ \ \Omega^3=\Omega^3_1\oplus \Omega^3_7\oplus \Omega^3_{27},
\end{align*}
where $\Omega^k_l$ has pointwise dimension $l$. Let $\gamma \in \Omega^k$. Given any $2$-tensor $A$ on $M$, we define
\begin{align}\label{eq:diadefn}
(A\diamond \gamma)_{i_1i_2\cdots i_k}=A_{i_1p}\gamma_{pi_2\cdots i_k}+A_{i_2p}\gamma_{i_1pi_3\cdots i_k}+\cdots + A_{i_kp}\gamma_{i_1i_2\cdots i_{k-1}p}.	
\end{align}	
So, for instance, $g\diamond \gamma = k\gamma$. The operation $\diamond$ is the infinitesimal action of the group $\mathrm{GL}(7, \bR)$ on the space of differential forms. If $\cT^2$ denote the space of $2$-tensors, then we get a linear map 
\begin{align*}
\diamond : \cT^2 \rightarrow \Omega^3.
\end{align*}

Recall that $\cT^2= C^{\infty}(M)g \oplus S^2_0 \oplus \Omega^2_7 \oplus \Omega^2_{14}$, where $S^2_0$ denote the space of symmetric traceless $2$-tensors. It's easy to prove that $\ker(\diamond) \cong \Omega^2_{14}$ and we have isomorphisms
\begin{align*}
C^{\infty}(M)\cong \Omega^3_1,\ \ \ \ \ \Omega^2_7 \cong \Omega^3_7 \cong  \Omega^1,\ \ \ \ \  \ S^2_0\cong \Omega^3_{27}.
\end{align*}
As a result, any $3$-form $\sigma\in \Omega^3(M)$ can be described by a pair $(h,X)$ where $h$ is a symmetric $2$-tensor and $X$ a vector field on $M$. We have
\begin{align*}
\sigma = h\diamond \g2+ X\lrcorner \psi.
\end{align*}
We have the following identities for contractions between $\g2$ and $\psi$.
\begin{align}
\g2_{ijk}\g2_{abk}&=g_{ia}g_{jb}-g_{ib}g_{ja}-\psi_{ijab}, \label{eq:phiwithphi1} \\
\g2_{ijk}\psi_{abck}&=g_{ia}\g2_{jbc}+g_{ib}\g2_{ajc}+g_{ic}\g2_{abj}-g_{ja}\g2_{ibc}-g_{jb}\g2_{aic}-g_{jc}\g2_{abi}. \label{eq:phiwithpsi1}
\end{align}

The torsion of a $\G2$-structure is a $2$-tensor $T$ and is given by
\begin{align}\label{eq:delphi}
	\del_m\g2_{ijk}= T_{mp}\psi_{pijk}.
\end{align}	
Using the identities \eqref{eq:phiwithphi1}-\eqref{eq:phiwithpsi1}, we can derive the following formulas,
\begin{align}
T_{pq}=\frac{1}{24}\del_p\g2_{ijk}\psi_{qijk}, \ \ \ \del_p\psi_{ijkl}=-T_{pi}\g2_{jkl}+T_{pj}\g2_{ikl}-T_{pk}\g2_{ijl}+T_{pl}\g2_{ijk}. \label{eq:delpsi}
\end{align}
Since $T$ is a $2$-tensor, we can write
\begin{align*}
T=T_1+T_{27}+T_7+T_{14}\ \ \ \ \ \ \ \text{where}\ \ T_1=\dfrac{\tr T}{7}g.
\end{align*}
These $T_i's$ are called \emph{intrinsic torsion forms} of the $\G2$-structure. We record here that 
\begin{align}
T^t = T_1+T_{27}-T_7-T_{14},\ \  |T|^2=|T_1|^2+|T_{27}|^2+|T_7|^2+|T_{14}|^2,\ \ (\tr T)^2=7|T_1|^2.
\end{align}
Since $\Omega^2_7\cong \Omega^1$ which, using the metric, can be identified with the vector fields, we can view the $T_7$ component of the torsion as a vector field (or a $1$-form). We denote it by $\Vop{T}$ with $(\Vop{T})_k=T_{ij}\g2_{ijk}$.

\medskip

The covariant derivative of the torsion of $\g2$ and the Riemann curvature tensor of the underlying metric are both second order in $\g2$ and they are related. In fact, since the torsion is a diffeomorphism invariant tensor, it satisfies a Bianchi-type identity which gives the relation between $\del T$ and the Riemann curvature tensor. Such an identity is called the $\G2$-Bianchi identity which  was first proved by Karigannis \cite[Thm. 4.2]{flows1} and is the following
\begin{align}\label{eq:G2Bianchi}
	\del_iT_{jq}-\del_jT_{iq}&=T_{ia}T_{jb}\g2_{abq}+\frac  12 R_{ijab}\g2_{abq}.
\end{align}	

A simple consequence of the $\G2$-Bianchi identity is an expression of the scalar curvature $R$ in terms of the intrinsic torsions which is given by
\begin{align}\label{eq:scalarcurv}
R=6|T_1|^2-|T_{27}|^2+5|T_7|^2-|T_{14}|^2 - 2\Div(VT).
\end{align}
We also have the following expression for the Lie derivative of $\g2$ in the direction of a vector field $Y$,
	\begin{align}\label{eq:liederivativephi}
	\cL_Y \g2= \frac 12 \cL_Yg \diamond \g2 + \left(-\frac 12 \curl Y + Y\lrcorner T\right)\lrcorner \psi,\ \ \ \ \text{where}\ \ (\curl Y)_k=\del_iY_j\g2_{ijk}.
	\end{align}
	
We now turn to Spin(7)-structures on $8$-dimensional manifolds. The reader can consult \cite{joycebook}, \cite{karigiannis-spin7} and \cite{dwivedi-spin7} for more details on Spin(7)-structures.

\medskip

A Spin(7)-structure on $M$ is a particular type of $4$-form $\s7$ on $M$. The existence of such a structure is a \emph{topological condition}. Concretely, an $8$-manifold admits a Spin(7)-structure if and only if it is orientable, spinnable, conditions which are equivalent to the vanishing of the first and second Stiefel--Whitney classes respectively, and for some orientation on M,
\begin{align*}
	p_1^2-4p_2+8\chi=0,
\end{align*}
where $p_i$ is the i-th Pontryagin class and $\chi$ is the Euler class of $M$. The space of $4$-forms which determine a Spin(7)-structure on $M$ is a subbundle $A$ of $\Omega^4(M)$, called the bundle of \emph{admissible} $4$-forms. This is \emph{not} a vector subbundle and it is not even an open subbundle, unlike the case for $\G2$-structures. For $p\in M$, the subbundle $A_p(M)$ is of codimension $27$ in $\Lambda^4(T^*_pM)$.

\medskip

\noindent
A Spin(7)-structure $\s7$  determines a Riemannian metric and an orientation on $M$. 

\begin{definition}
	Let $\del$ be the Levi-Civita connection of the metric $g_{\s7}$. The pair $(M^8, \s7)$ is a \emph{Spin(7)-manifold} if $\del \s7=0$. This is a non-linear partial differential equation for $\s7$, since $\del$ depends on $g$, which in turn depends non-linearly on $\s7$. A Spin(7)-manifold has Riemannian holonomy contained in the subgroup $\S7\subset \SO(8)$. Such a parallel Spin(7)-structure is also called \emph{torsion free}. 
\end{definition}

The existence of a Spin(7)-structure $\s7$ induces a decomposition of the space of differential forms on $M$  into irreducible Spin(7)-representations. We have the following orthogonal decompositions, with respect to $g_\s7$:
\begin{align*}
	\Omega^2=\Omega^2_{7}\oplus \Omega^2_{21},\ \ \ \ \ \ \ \Omega^3=\Omega^3_{8}\oplus \Omega^3_{48}, \ \ \ \ \ \ \ \ \  \Omega^4=\Omega^4_{1}\oplus \Omega^4_{7}\oplus \Omega^4_{27}\oplus \Omega^4_{35},
\end{align*}
where $\Omega^k_l$ has pointwise dimension $l$. 

\medskip

Since we are interested in flows of Spin(7)-structures, we only need the decomposition of $4$-forms for our current purposes. We again use the analog of the $\diamond$ operator for Spin(7)-structures. 
Given $A\in \Gamma(T^*M\otimes TM)$, define
\begin{align}
	\label{eq:diadefn1}
	A\diamond \s7= \frac{1}{24}(A_{ip}\s7_{pjkl}+A_{jp}\s7_{ipkl}+A_{kp}\s7_{ijpl}+A_{lp}\s7_{ijkp})e^i\wedge e^j\wedge e^k\wedge e^l,    
\end{align}
and hence 
\begin{align}
	\label{eq:diadefn2}
	(A\diamond \s7)_{ijkl}=  A_{ip}\s7_{pjkl}+A_{jp}\s7_{ipkl}+A_{kp}\s7_{ijpl}+A_{lp}\s7_{ijkp}. 
\end{align}

Just like the $\G2$-case, we have the following proposition. See \cite[Prop. 2.4]{dwivedi-spin7} for a proof.
\begin{proposition}\label{prop:diaproperties1}
	The kernel of the map $A\mapsto A\diamond \s7$ is isomorphic to the subspace $\Omega^2_{21}$. The remaining three summands $\Omega^0,\ S_0$ and $\Omega^2_7$ are mapped isomorphically onto the subspaces $\Omega^4_1,\ \Omega^4_{35}$ and $\Omega^4_7$ respectively.
\end{proposition}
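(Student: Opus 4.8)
The plan is to use that, just as in the $\G2$ case recalled in \textsection~\ref{sec:prelims}, the operation $A\mapsto A\diamond\s7$ of \eqref{eq:diadefn2} is the infinitesimal action of $\GL(8,\bR)$ on $4$-forms evaluated at $\s7$, and to deduce everything from $\S7$-equivariance, Schur's lemma and a dimension count. We argue pointwise, so fix $p\in M$ and write $\cT^2\cong\bR g\oplus S^2_0\oplus\Omega^2_7\oplus\Omega^2_{21}$, of dimensions $1+35+7+21=64=8^2$ (here $\bR g$ is the ``$\Omega^0$'' and $S^2_0$ the ``$S_0$'' of the statement), while $\Omega^4=\Omega^4_1\oplus\Omega^4_7\oplus\Omega^4_{27}\oplus\Omega^4_{35}$. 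For $q\in\Stab_{\GL(8,\bR)}(\s7)$ one has $q\cdot(A\diamond\s7)=(\mathrm{Ad}_qA)\diamond(q\cdot\s7)=(\mathrm{Ad}_qA)\diamond\s7$, so $\diamond$ intertwines the induced $\S7$-actions on $\cT^2$ and $\Omega^4$; hence, by Schur's lemma, it sends each $\S7$-irreducible summand of $\cT^2$ either to $0$ or isomorphically onto an $\S7$-irreducible subrepresentation of $\Omega^4$ of the same dimension.

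First I would determine the kernel. As $\diamond$ is the infinitesimal $\GL(8,\bR)$-action at $\s7$, we have $A\diamond\s7=0$ exactly when $A$ lies in the Lie algebra of $\Stab_{\GL(8,\bR)}(\s7)$; and since $\s7$ is an admissible $4$-form this stabiliser equals the subgroup $\S7\subset\SO(8)$, so its Lie algebra is $\mathfrak{spin}(7)\subset\fso(8)\cong\Omega^2$, which is $21$-dimensional and which is, by definition, the summand $\Omega^2_{21}$. Therefore $\ker\diamond=\Omega^2_{21}$. (If one prefers to avoid the stabiliser, one may instead check directly, via the Spin(7)-analogues of the contraction identities \eqref{eq:phiwithphi1}--\eqref{eq:phiwithpsi1}, that every $A\in\Omega^2_{21}$ satisfies $A\diamond\s7=0$; together with the injectivity on the other three summands shown below, and the dimension count, this already pins down $\ker\diamond=\Omega^2_{21}$.)

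It remains to locate the images of the three other summands. A direct computation from \eqref{eq:diadefn2} gives $g\diamond\s7=4\s7\neq0$, so $\diamond$ is injective on $\bR g$, with image the line $\bR\s7$, which is $\Omega^4_1$. Since $\Omega^2_{21}$ is the \emph{whole} kernel, $\diamond$ is injective on $S^2_0$ and on $\Omega^2_7$ as well, and by the equivariance above their images are $\S7$-irreducible subrepresentations of $\Omega^4$ of dimensions $35$ and $7$ respectively. Because the four summands of $\Omega^4$ have pairwise distinct dimensions, $\Omega^4$ is multiplicity-free as an $\S7$-representation, so any irreducible subrepresentation is one of these summands; the dimensions then force the image of $S^2_0$ to be $\Omega^4_{35}$ and the image of $\Omega^2_7$ to be $\Omega^4_7$. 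This proves all the claimed isomorphisms, and shows incidentally that $\Omega^4_{27}$ is not contained in the image of $\diamond$.

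The step I expect to be the main obstacle is the precise identification $\ker\diamond=\Omega^2_{21}$ --- i.e. that the kernel contains no symmetric part and is exactly this $21$-dimensional summand, not merely some $21$-dimensional subspace or something of larger dimension. Conceptually this is immediate once one knows $\Stab_{\GL(8,\bR)}(\s7)=\S7$, which is essentially the defining property of a Spin(7)-structure; the only genuinely computational alternative is the short contraction argument indicated above. Everything else in the statement is then forced by $\S7$-equivariance, Schur's lemma and dimension counting, exactly as for the $\G2$ decompositions recalled in \textsection~\ref{sec:prelims}.
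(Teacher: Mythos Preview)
Your argument is correct: identifying $\diamond$ as the infinitesimal $\GL(8,\bR)$-action at $\s7$, reading off $\ker\diamond=\mathfrak{spin}(7)=\Omega^2_{21}$ from the stabiliser, and then using $\S7$-equivariance, Schur's lemma and the multiplicity-free decomposition of $\Omega^4$ to match dimensions is exactly the standard route. Note, however, that the present paper does not actually supply a proof of this proposition --- it simply refers the reader to \cite[Prop.~2.4]{dwivedi-spin7} --- so there is no in-paper argument to compare against; your proof is a self-contained version of what one finds in that reference (and is essentially the same argument used in the $\G2$ case in \textsection\ref{sec:prelims}).
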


We now describe the \emph{torsion} of a Spin(7)-structure. Given $X\in \Gamma(TM)$, we know from \cite[Lemma 2.10]{karigiannis-spin7} that $\del_X\s7$ lies in the subbundle $\Omega^4_7\subset\Omega^4$. 
\begin{definition}
The \emph{torsion tensor} of a Spin(7)-structure $\s7$ is the element of $\Omega^1_8\otimes \Omega^2_7$ defined using $\del \s7$.
Since $\del_X\s7\in \Omega^4_7$, by Proposition \ref{prop:diaproperties1}, $\del \s7$ can be written as
\begin{align}
	\label{Tdefneqn}
	\del_m\s7_{ijkl}=(T_m\diamond \s7)_{ijkl}=T_{m;ip}\s7_{pjkl}+T_{m;jp}\s7_{ipkl}+T_{m;kp}\s7_{ijpl}+T_{m;lp}\s7_{ijkp}    
\end{align}
where $T_{m;ab}\in\Omega^2_7$, for each fixed $m$. This defines the torsion tensor $T$ of a Spin(7)-structure, which is an element of $\Omega^1_8\otimes \Omega^2_7$.
\end{definition}

\noindent
In terms of $\del \s7$, the torsion $T$ is given by
\begin{align}
\label{eq:Texpress}
T_{m;ab} 
=\frac{1}{96}(\del_m\s7_{ajkl})\s7_{bjkl}    
\end{align}
since $T$ is an element of $\Omega^1_8\otimes \Omega^2_7$.
\begin{remark}
The notation $T_{m;ab}$ should not be confused with taking two covariant derivatives of $T_m$. The torsion tensor T is an element of $\Omega^1_8\otimes \Omega^2_7$ and thus, for each fixed index $m$, $T_{m;ab}\in \Omega^2_7$. \demo
\end{remark} 
\noindent
We write $\Div T$ for the divergence of the torsion which is an element of $\Omega^2_7$ and is given by
\begin{align}\label{eq:divT}
(\Div T)_{jk}=\del_mT_{m;jk}.
\end{align}

%

\medskip

\noindent
The Riemann curvature tensor $\Rm$ of the metric of a Spin(7)-structure and the covariant derivate of the torsion $\del T$ are also related and they satisfy a ``Bianchi-type identity''. This was first proved by Karigiannis \cite[Theorem 4.2]{karigiannis-spin7} using the diffeomorphism invariance of the torsion tensor and a different proof using the Ricci identity \eqref{eq:ricciiden} was given in \cite[Theorem 3.9]{dle-isometric}.

\begin{theorem}\label{thm:spin7bianchi}
The torsion tensor $T$ satisfies the following ``Bianchi-type'' identity
\begin{align}\label{spin7bianchi}
	\del_iT_{j;ab}-\del_jT_{i;ab}=2T_{i;am}T_{j;mb}-2T_{j;am}T_{i;mb}+\frac 14R_{jiab}-\frac 18R_{jimn}\s7_{mnab}.    
\end{align}
\qed
\end{theorem}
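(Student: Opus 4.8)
The plan is to mimic the derivation of the $\G2$-Bianchi identity \eqref{eq:G2Bianchi} — equivalently, the Ricci-identity proof of \cite[Theorem 3.9]{dle-isometric} — working directly from the explicit formula \eqref{eq:Texpress} for the torsion. \textbf{Step 1.} Differentiate \eqref{eq:Texpress}: applying $\del_i$ to $T_{j;ab}=\tfrac{1}{96}(\del_j\s7_{acde})\s7_{bcde}$ and using the Leibniz rule gives
\[
\del_iT_{j;ab}=\tfrac{1}{96}\,(\del_i\del_j\s7_{acde})\,\s7_{bcde}+\tfrac{1}{96}\,(\del_j\s7_{acde})(\del_i\s7_{bcde}).
\]
In the last term I would substitute the defining equation \eqref{Tdefneqn} for both $\del_j\s7$ and $\del_i\s7$, so that it becomes a sum of products of two torsion components contracted against two copies of $\s7$; this is the origin of the quadratic-in-torsion contribution. \textbf{Step 2.} Antisymmetrize in $i\leftrightarrow j$. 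In the resulting $\del_i\del_j\s7_{acde}-\del_j\del_i\s7_{acde}$ I would apply the Ricci identity \eqref{eq:ricciiden}, generalized to $4$-tensors, producing the four curvature terms $-R_{ijam}\s7_{mcde}-R_{ijcm}\s7_{amde}-R_{ijdm}\s7_{acme}-R_{ijem}\s7_{acdm}$, while the antisymmetrized quadratic piece becomes $\tfrac{1}{96}\big[(\del_j\s7_{acde})(\del_i\s7_{bcde})-(\del_i\s7_{acde})(\del_j\s7_{bcde})\big]$.

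\textbf{Step 3.} Contract everything (as it already stands) with $\tfrac{1}{96}\s7_{bcde}$ and simplify using the Spin(7) contraction identities: the norm identity $\s7_{acde}\s7_{bcde}=42\,g_{ab}$, which collapses the first curvature term to a multiple of $R_{ijab}$, and the two-index contraction identity expressing $\s7_{ijpq}\s7_{klpq}$ as a combination of products of metric tensors and $\s7_{ijkl}$, which handles the remaining three curvature terms (and these three are in fact equal after relabelling the antisymmetric dummy indices of $\s7$). Besides multiples of $R_{ijab}$ and $R_{ijmn}\s7_{mnab}$ this produces extra curvature--$\s7$ contractions of the schematic form $R_{ijcm}\s7_{amb c}$ (with one curvature index fed into $\s7$), which I would eliminate using the first Bianchi identity $R_{i[jcm]}=0$ together with the total antisymmetry of $\s7$, exactly as in the $\G2$ case. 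Tallying the coefficients leaves the curvature part $\tfrac14 R_{jiab}-\tfrac18 R_{jimn}\s7_{mnab}$, and the same contraction identities (together with the skew-symmetry of the $\Omega^2_7$-valued slot of $T$) reduce the quadratic terms from Step 1 to $2T_{i;am}T_{j;mb}-2T_{j;am}T_{i;mb}$, which completes the proof.

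The main obstacle is purely computational bookkeeping: pinning down the precise numerical coefficients in the several Spin(7) contraction identities, tracking index orderings through the antisymmetrizations, and correctly invoking the first Bianchi identity of $\Rm$ to cancel the curvature--$\s7$ contractions absent from the final formula. A convenient consistency check is that the right-hand side must lie in the $\Omega^2_7$-summand in the pair $ab$, since the left-hand side does (each $T_{j;ab}$ lying in $\Omega^2_7$); this pins down the relative coefficient of $R_{jiab}$ and $R_{jimn}\s7_{mnab}$, and the overall scale is then fixed by $\s7_{acde}\s7_{bcde}=42\,g_{ab}$. An alternative, following Karigiannis's original argument for \cite[Theorem 4.2]{karigiannis-spin7}, is to exploit the diffeomorphism-equivariance of the torsion tensor — equating $\cL_X T$ computed geometrically with the algebraic variation of $T$ induced by $\cL_X\s7$ — but the Ricci-identity computation above is more self-contained given the identities already recorded in \textsection\ref{sec:prelims}.
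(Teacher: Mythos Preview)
Your proposal is correct and follows precisely the Ricci-identity approach that the paper itself cites (from \cite[Theorem 3.9]{dle-isometric}) as one of the two known proofs; note that the paper does not actually give its own proof of this statement --- it merely states the result with a \qed and refers the reader to Karigiannis's original diffeomorphism-invariance argument and the Ricci-identity argument you have outlined. Your sketch is therefore exactly what the paper intends, and the alternative you mention at the end (via diffeomorphism-equivariance of $T$) is the other cited proof.
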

Using the Riemannian Bianchi identity, we see  that
\begin{align*}
R_{ijkl}\s7_{ajkl}=-(R_{jkil}+R_{kijl})\s7_{ajkl}=-R_{iljk}\s7_{aljk}-R_{ikjl}\s7_{akjl}    
\end{align*}
and hence we have the fact that 
\begin{align}\label{rmspin7}
R_{ijkl}\s7_{ajkl}=0.    
\end{align}
Using this and contracting \eqref{spin7bianchi} on $j$ and $b$ gives the expression for the Ricci curvature of a metric induced by a Spin(7)-structure. Precisely,
\begin{align}\label{ricci}
R_{ij}=4\del_iT_{a;ja}-4\del_aT_{i;ja}-8T_{i;jb}T_{a;ba}+8T_{a;jb}T_{i;ba}    
\end{align}
which also proves that the metric of a torsion free Spin(7)-structure is Ricci-flat, a result originally due to Bonan. Taking the trace of \eqref{ricci} gives the expression of the scalar curvature $R$
\begin{align}
R&=4\del_iT_{a;ia}-4\del_aT_{i;ia}+8|T_8|^2+8T_{a;jb}T_{j;ba}.\label{scalar1}
\end{align}

\section{Ricci-harmonic flow of $\G2$-structures}\label{sec:motivation}
We explain our motivation for the flow and why it can be regarded as a ``heat flow" for $\G2$-structures. We start with the discussion of the Ricci flow of metrics. Recall that if $(M^n,g)$ is a Riemannian manifold and $x\in M$ then we can write the Taylor series expansion of the components of the metric in Riemannian normal coordinates centred at the point $x$. This well-known expansion is given by 
		\begin{align}\label{eq:tsmetric}
		g_{ij}(x^1,\ldots, x^n)=\delta_{ij}+\frac 16 \left(R_{piqj}+R_{pjqi}\right)x^px^q+O(||x||^3).
		\end{align} 
One motivation for defining the Ricci flow as the evolution of a family of metrics $g(t)$ by $\ptt g(t)=-2\Ric(g(t))$, as described by Hamilton \cite[\textsection 1]{hamilton-singularities}, is that we compute the Laplacian of the components of the metric in normal coordinates, i.e.,
\begin{align*}
\Delta g_{ij}=g^{pq}\frac{\pt^2}{\pt x^p\pt x^q}g_{ij}=-\frac 23 R_{ij},
\end{align*}
where we used \eqref{eq:tsmetric} and hence we view the Ricci flow as heat equation for the metric. It can also be shown that in \emph{harmonic coordinates}, that is, coordinate system $(x^i)$ with $\Delta x^i=0$ for all $i$, the Laplacian of the components of the metric is precisely $-2\Ric$ (see \cite[Lemma 3.32]{Chow-Knopf} for a proof). Thus, from both the discussions we can view the Ricci flow as the heat flow for Riemannian metrics. As a result, we expect the ``smoothening'' of the metrics as they evolve by the Ricci flow.

We follow the same ideas for other special geometric structures, which in the present paper are $\G2$ and Spin(7)-structures. Let $(M^7, \g2)$ be a manifold with a $\G2$-structure. For $x\in M$, we choose our local orthonormal frame $\{e_1,\ldots,e_7\}$ of $T_xM$ to be $\G2$-\emph{adapted} which means that at the point $x$, the components $\varphi_{ijk}$ agree with those of the standard flat model on $\bR^7$. We recall the following Taylor series expansion of a $\G2$-structure $\g2$ from \cite{dgk-flows}.
	
	\begin{theorem} \label{thm:taylor-g2}
		\emph{\cite[Thm. 2.25]{dgk-flows}}
		Let $(x^1, \ldots, x^7)$ be $\G2$-adapted Riemannian normal coordinates centred at $x \in M$. The components $\g2_{ijk}$ of $\g2$ have Taylor expansions about $0$, which is the point in $\bR^7$ corresponding to $x \in M$, given by
		\begin{equation} \label{eq:taylor-g2}
			\g2_{ijk} (x^1, \ldots, x^7) = \g2_{ijk} + (T_{qm} \psi_{mijk}) x^q + \prescript{\g2}{}{\! \mathcal Q_{pq \, ijk}} x^p x^q + O(\|x\|^3),
		\end{equation}
		where
		\begin{align}\nonumber 
			\prescript{\g2}{}{\! \mathcal Q_{pq \, ijk}} & = \tfrac{1}{2} \del_{p} T_{qm} \psi_{mijk} - \tfrac{1}{2} (TT^t)_{pq} \g2_{ijk} \\ \nonumber
			& \qquad  + \tfrac{1}{2} T_{pm} (T_{qi} \g2_{mjk} + T_{qj} \g2_{mki} + T_{qk} \g2_{mij}) \\ 
			& \qquad {} + \tfrac{1}{6} (R_{piqm} \g2_{mjk} + R_{pjqm} \g2_{mki} + R_{pkqm} \g2_{mij}).\label{eq:taylor-ph-B}
		\end{align}
		Here all coefficient tensors on the right hand side are evaluated at $0$.
	\end{theorem}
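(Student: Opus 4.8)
The plan is to establish the Taylor expansion \eqref{eq:taylor-g2} by the same route used for the metric expansion \eqref{eq:tsmetric}, namely by repeatedly covariantly differentiating $\g2$ at the centre $x$ and converting covariant derivatives into partial derivatives using the properties of $\G2$-adapted normal coordinates. Recall that in Riemannian normal coordinates centred at $x$ one has $\Gamma^k_{ij}(x)=0$, and moreover the symmetrised first partials of the Christoffel symbols satisfy $\pt_{(p}\Gamma^k_{q)i}(x) = -\tfrac13(R_{pi\,q}{}^k + R_{qi\,p}{}^k)(x)$; combined with the $\G2$-adapted condition (the components $\g2_{ijk}$ at $x$ equal those of the flat model, so in particular $\pt_m \g2_{ijk}$ at $x$ is governed entirely by the torsion via \eqref{eq:delphi}), this is exactly the input needed. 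Concretely, the zeroth-order term is $\g2_{ijk}(x)$; the first-order term is $\pt_q \g2_{ijk}(x) = \del_q\g2_{ijk}(x) = T_{qm}\psi_{mijk}$ using \eqref{eq:delphi} and the vanishing of the Christoffels; and the second-order term is $\tfrac12 \pt_p\pt_q\g2_{ijk}(x)$, which we must expand.

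The key computation is therefore the second-order term. I would write $\pt_p\pt_q\g2_{ijk} = \pt_p(\del_q\g2_{ijk} + \Gamma\text{-terms})$, expand $\del_q\g2_{ijk} = T_{qm}\psi_{mijk}$, and carefully track (i) $\pt_p$ hitting $T_{qm}$, which at $x$ gives $\del_p T_{qm}\,\psi_{mijk}$ plus a term from $\pt_p$ hitting $\psi$ that via \eqref{eq:delpsi} produces the quadratic-torsion terms $-\tfrac12 T_{pm}(T_{qi}\g2_{mjk}+\dots)$-type contributions along with a $(TT^t)$ term; and (ii) $\pt_p$ hitting the Christoffel symbols that appear in the covariant-derivative correction, which at $x$ contributes the curvature terms through the symmetrised-partial identity above, yielding $\tfrac13(R_{piqm}\g2_{mjk}+R_{pjqm}\g2_{mki}+R_{pkqm}\g2_{mij})$ after symmetrising in $p,q$ (hence the $\tfrac16$ in \eqref{eq:taylor-ph-B}). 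One must be careful that $\prescript{\g2}{}{\!\cQ_{pq\,ijk}}$ as written is not yet symmetric in $p,q$; the expansion only sees its symmetric part, and the asserted formula is a natural choice of representative — I would either symmetrise explicitly or invoke the $\G2$-Bianchi identity \eqref{eq:G2Bianchi} to rewrite the antisymmetric-in-$pq$ part in terms of torsion and curvature, showing it is absorbed consistently.

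The main obstacle is the bookkeeping in step (i)–(ii): matching index placements across the three cyclic blocks in $\psi$-contractions and making sure the torsion-quadratic terms coming from differentiating $\psi$ (via \eqref{eq:delpsi}) combine correctly with the ``obvious'' $T_{pm}T_{q\bullet}$ terms, and that all pieces are evaluated at the origin where the $\G2$-adapted normalisation lets us use \eqref{eq:phiwithphi1}–\eqref{eq:phiwithpsi1} freely. Since this theorem is quoted from \cite[Thm. 2.25]{dgk-flows}, I would present the derivation as a verification along these lines rather than re-deriving every contraction, emphasising the two structural inputs — the normal-coordinate Christoffel identities and the torsion relations \eqref{eq:delphi}, \eqref{eq:delpsi} — and relegating the index gymnastics to a reference or a short appendix computation.
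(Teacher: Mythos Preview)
Your proposal is correct and follows essentially the same route as the paper: the paper does not reprove this particular theorem (it is cited from \cite{dgk-flows}), but its proofs of the analogous Theorems~\ref{thm:taylorspin7} and~\ref{thm:taylor-psi} proceed exactly as you describe, writing $\pt_q\g2_{ijk}=\del_q\g2_{ijk}+\Gamma\text{-terms}$, using $\Gamma|_0=0$ and $\pt_p\Gamma^m_{qi}|_0=\tfrac12(R^m_{pqi}+R^m_{piq})$, expanding $\del_p\del_q\g2$ via \eqref{eq:delphi}--\eqref{eq:delpsi}, and then symmetrising over $p,q$ by multiplying with $\tfrac12 x^p x^q$. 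Your remark about the $(p,q)$-symmetrisation and the role of the $\G2$-Bianchi identity is a nice clarification but not strictly needed, since the paper simply notes that the antisymmetric curvature piece drops out upon contraction with $x^p x^q$.
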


Thus, following the same ideas as Hamilton, the Laplacian of the components of the $\G2$-structure $\g2$ at the point $x$ are given by 
\begin{align*}
	\Delta(\g2)_{ijk}&= \frac 12 ((\Div T)\lrcorner \psi)_{ijk} - \frac 12 |T|^2 \g2_{ijk}+\frac 12T_{pm}(T_{pi}\g2_{mjk}+T_{pj}\g2_{imk}+T_{pk}\g2_{ijm}) \\
	& \qquad {} - \frac 16(R_{im}\g2_{mjk}+R_{jm}\g2_{imk}+R_{km}\g2_{ijm})
\end{align*}	
which can be re-written in terms of the $\diamond$ operation as
\begin{align}\label{eq:heateqpre}
	\Delta(\g2)_{ijk}&=-\frac 16 (\Ric \diamond \g2)_{ijk} + \frac 12 (T^tT\diamond \g2)_{ijk} - \frac 16(|T|^2g\diamond \g2)_{ijk} + \frac 12 (\Div T\lrcorner \psi)_{ijk}.
\end{align}

As we want to describe the ``heat equation'' for the $\G2$-structure, we propose the following flow of $\G2$-structures, which we call the ``Ricci-harmonic flow of $\G2$-structures'' based on the discussion in \textsection\ref{sec:intro}.
\begin{definition}\label{heatflowdefn}
Let $(M^7, \g2_0)$ be a compact manifold with a $\G2$-structure $\g2_0$. The Ricci-harmonic flow for the family of $\G2$-structures $\g2(t)$ is the following initial value problem
\begin{align} 
	\label{heatfloweqn} 
	\left\{\begin{array}{rl} 
		& \dfrac{\pt \g2}{\pt t} = \left(-\Ric+3 T^tT -|T|^2g \right) \diamond \g2 + \Div T\lrcorner \psi, \\
		& \g2(0) =\g2_0.
		\tag{RHF}
	\end{array}\right. 
\end{align}
\end{definition}
\noindent
In coordinates, the flow reads as
\begin{align}\label{eq:RHFincoord}
	\frac{\pt}{\pt t}\g2_{ijk}&= (-R_{ip}+3T_{mi}T_{mp}-|T|^2g_{ip})\g2_{pjk}+(-R_{jp}+3T_{mj}T_{mp}-|T|^2g_{jp})\g2_{ipk}+(-R_{kp}+3T_{mk}T_{mp} \nonumber \\
	& \qquad  -|T|^2g_{kp})\g2_{ijp} + (\Div T)_l\psi_{lijk}.
\end{align}

We remark that we do not put \emph{any extra conditions} on either the initial $\G2$-structure $\g2_0$ or the evolving $\G2$-structures $\g2(t)$, for instance, (co)-closedness, isometric etc. 

The reader might have noticed that we multiplied the expression in the second order term of the Taylor series expansion by a factor of $+6$ so that the highest order term in the evolution of the underlying metric becomes $-2\Ric$ (see \eqref{eq:RHFgevol}) but we kept the vector field part in \eqref{heatfloweqn} as $\Div T$ instead of $3 \Div T$. The only reason we made this choice is so that the principal symbol of the differential operator on the right-hand side of \eqref{heatfloweqn}, after a DeTurck's type trick, becomes exactly the Laplacian in both the $\Omega^3_{1\oplus 27}$ and the $\Omega^3_7$ part, see \cite[Prop. 6.6]{dgk-flows} for the expression for the principal symbols of the operator $\Ric$ and $\Div T$ and \cite[\textsection 6.4]{dgk-flows} for the DeTurck's trick.  The choice of the factor $1$ instead of $3$ in the $\Div T$ term does not effect any results which we obtain in the paper (see also Remarks~\ref{rmk;3divste} and \ref{rmk:3divgrad}).

\medskip 

{\bf{Stationary points of the Ricci-harmonic flow.}} We now look at the stationary points of the Ricci-harmonic flow \eqref{heatfloweqn}. Since $T=0 \implies \Ric =0$ hence torsion-free $\G2$-structures are stationary points of the flow \eqref{heatfloweqn}. Let $M$ be compact  and suppose $\g2$ is a stationary point for the flow. Then $-\Ric + 3T^tT-|T|^2g=0$ and $\Div T=0$. Taking the trace of the first equation gives
\begin{align}
	R-3|T|^2+7|T|^2 =R+4|T|^2&=0 \nonumber 
\end{align}	
which on using the expression for the scalar curvature \eqref{eq:scalarcurv} gives
\begin{align}
	6|T_1|^2-|T_{27}|^2+5|T_7|^2-|T_{14}|^2 - 2\Div(\Vop T)+4(|T_1|^2+|T_7|^2+|T_{14}|^2+|T_{27}|^2)&=0 \nonumber 
\end{align}	
which implies that
\begin{align}
	10|T_1|^2+9|T_7|^2+3|T_{14}|^2+3|T_{27}|^2-2\Div(\Vop T) &=0, \label{eq:critpnts} 
	\end{align}
which on integrating over compact $M$ proves that $T_i=0,\ \text{for}\ i=1, 7, 14, 27$ and hence $\g2$ is torsion-free. {\bf{Thus, we see that stationary points of the flow \eqref{heatfloweqn} on compact manifolds are precisely torsion-free $\G2$-structures.}} We emphasize that this also shows that the lower order terms in our geometric flow are suitable which, in turn, were obtained by following the philosophy of describing the heat equation for $\G2$-structures. 

\begin{remark}
Suppose the evolving $\G2$-structures $\g2(t)$ are closed, i.e., $d\g2=0$. For instance, this is the setting of Bryant's Laplacian flow of closed $\G2$-structures. In that case, $T_{14}\in \Omega^2_{14}$ is the only non-vanishing torsion component. We have $d^*\g2=2T_{14}$ which gives
\begin{align*}
d^*T_{14}&=\frac 12 d^*(d^*\g2)=0	
\end{align*}
and hence for closed $\G2$-structures, $\Div T=\Div T_{14}=0$ (this is a well-known fact). Using the fact that $T$ is skew-symmetric, we get
\begin{align*}
	-\Ric + 3T^tT-|T|^2g&= -\Ric -3T^2-|T|^2g
\end{align*}
where $(T^2)_{ij}=T_{ip}T_{pj}$. The heat flow \eqref{heatfloweqn}, in this case, reads as
\begin{align*}
	\frac{\pt \g2}{\pt t}&= (-\Ric -3T^2-|T|^2g)\diamond \g2.
\end{align*}
The equation for the Laplacian flow of closed $\G2$-structures in \cite[eqs. (3.1)--(3.4)]{lotay-wei-gafa} are
\begin{align*}
\ptt \g2 = (-\Ric -2T^2-\frac 13|T|^2g)\diamond \g2,
\end{align*}
which shows that \eqref{heatfloweqn} differs from the Laplacian flow of closed $\G2$-structures only by constants on the lower order terms involved in the definition of the flow. It is tempting to modify the constants in the lower-order terms in \eqref{heatfloweqn} to match the Laplacian flow of closed $\G2$-structures so that the Ricci-harmonic flow reduces to the former for closed $\G2$-structures. We, however, see that such a modification will result in having classes other than torsion-free $\G2$-structures as stationary points of the flow and for this reason, we do not make the modification. \rightline{$\blacktriangle$}
\end{remark}

\begin{remark}\label{rem:RHFcoclos}
Suppose the evolving $\G2$-structures $\g2(t)$ are \emph{co-closed}, i.e., $d \psi(t)=0$ (recall that $\psi(t)=*_{\g2(t)}\g2(t))$. In this case, $T=T_1g +T_{27} \in \Omega^0g\oplus \Omega^3_{27}$ and hence the torsion is symmetric and so $T^t=T$. Moreover, contracting the $\G2$-Bianchi identity \eqref{eq:G2Bianchi} on the indices $i$ and $q$, we get
\begin{align*}
(\Div T^t)_j-\del_j \tr T&= T_{ia}T_{jb}\g2_{abi}+\frac 12 R_{ijab}\g2_{abi}\nonumber \\
&=0,
\end{align*}
where we used the fact that $T$ is symmetric and the Riemannian first Bianchi identity. Thus, we have $\Div T^t=\Div T=\del \tr T$. Thus, the Ricci-harmonic flow \eqref{heatfloweqn} in the case of co-closed $\G2$-structures is
\begin{align}\label{eq:RHFcoclos}
\ptt \g2(t)= \left(-\Ric +3T^2-|T|^2g\right)\diamond \g2 + \Div T\lrcorner \psi = \left(-\Ric +3T^2-|T|^2g\right)\diamond \g2 + \del \tr T\lrcorner \psi. 
\end{align}
This has very different lower order terms than, for instance, the Laplacian co-flow of co-closed $\G2$-structures in \cite{kmt} or its modification in \cite{grigorian-modified}. In fact, it is different from the natural heat flow for $4$-forms which one can derive using the Taylor series expansion of the $4$-form $\psi$ considered in Appendix~\ref{sec:appendix}. On the other hand, the flow in \eqref{eq:RHFcoclos} has resemblance to the geometric flow of $\G2$-structures obtained by looking at a $\G2$-Einstein-Hilbert functional in \cite{panos-georgeG2Hilbert}, precisely, look at the flows in \cite[eqs. (4.20) and (4.21)]{panos-georgeG2Hilbert}. This is very curious and should be investigated further. \demo
\end{remark}

\begin{remark}
We used compactness of $M$ to get rid of the divergence term in \eqref{eq:critpnts} and get that torsion-free $\G2$-structures are the only stationary points. The reader might have noticed that we have not used all the conditions of $\g2$ being a stationary point, in particular, the condition of $\Div T=0$ was not used at all in the computations leading to \eqref{eq:critpnts}. It's conceivable that one can prove the result on stationary points on noncompact manifolds as well using the condition of vanishing of $\Div T$.\demo
\end{remark}

To prove short-time existence and uniqueness of solutions to the flow \eqref{heatfloweqn}, we use the very general theorem from \cite{dgk-flows} which gives sufficient conditions for the short-time existence and uniqueness of solutions to general flows of $\G2$-structures, We recall the result below.

\begin{theorem}\label{thm:dgkmainthm}
\emph{\cite[Thm. 6.15]{dgk-flows}.}
Let $(M,\g2_0)$ be a compact $7$-manifold with a $\G2$-structure $\g2_0$. Consider the flow
\begin{equation} \label{eq:g2flow_general}
	\begin{aligned}
		\frac{\partial}{\partial t} \g2(t) & = (- \Ric + a \mathcal{L}_{\Vop T} g + \lambda F) \diamond \g2 + (b_1 \Div T+ b_2 \Div T^t) \lrcorner \psi + \text{lower\ order\ terms}, \\
		\g2(0) & = \g2_0,
	\end{aligned}
\end{equation}
where $F$ is the symmetric $2$-tensor given by $F_{ij}=R_{pqrs}\g2_{pqi}\g2_{rsj}$. Suppose that $0 \leq b_1 - a-1 < 4$, $b_1 + b_2 \geq 1$ and $|\lambda| < \frac{1}{4} c$, where $c = 1 - \frac{1}{4} (b_1 - a-1) > 0$.
\noindent
Then there exists $\varepsilon > 0$ and a unique smooth one-parameter family of $\G2$-structures $\g2(t)$ for $t \in [0,\varepsilon)$, solving~\eqref{eq:g2flow_general}.	
\end{theorem}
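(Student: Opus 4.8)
This is a short-time existence and uniqueness statement for a quasilinear second-order geometric evolution equation, and the natural route is the $\G2$-analogue of Hamilton's DeTurck trick, carried out in \cite[\textsection 6]{dgk-flows}. The plan is as follows. Write the right-hand side of \eqref{eq:g2flow_general} as $\mathcal{P}(\g2)$, a second-order operator in $\g2$ (recall that $g_\g2$ depends algebraically on $\g2$, so $\Ric(g_\g2)$, $F$, $\Div T$, $\Div T^t$ and $\mathcal{L}_{\Vop T}g$ are all genuinely second order in $\g2$, while the $T^tT$-type terms are lower order). Because every tensor appearing in $\mathcal{P}$ is natural, the principal symbol $\sigma_\xi(\mathcal{P})$ annihilates the image of the symbol of $Y \mapsto \mathcal{L}_Y \g2$, which by \eqref{eq:liederivativephi} is a genuine degeneracy, so the flow itself is not parabolic. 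One therefore fixes a background connection (or background $\G2$-structure) and passes to the modified \emph{$\G2$-DeTurck flow} $\ptt \g2 = \mathcal{P}(\g2) + \mathcal{L}_{W(\g2)} \g2$, where $W(\g2)$ is the usual DeTurck vector field built from the difference between the Levi-Civita connection of $g_\g2$ and the background connection; by \eqref{eq:liederivativephi} its top-order contribution is $\tfrac12 \mathcal{L}_W g \diamond \g2$ in the $\Omega^3_{1\oplus 27}$ direction and $-\tfrac12 \curl W \lrcorner \psi$ in the $\Omega^3_7$ direction.

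The technical heart of the argument, and the step I expect to be the main obstacle, is computing the total principal symbol of the modified operator and showing that the stated numerical hypotheses are exactly the conditions making it positive definite. Using $\Omega^3 = \Omega^3_{1 \oplus 27} \oplus \Omega^3_7$ and the parametrization of $3$-forms by a symmetric $2$-tensor $h$ (via $\diamond$, with $\ker \diamond \cong \Omega^2_{14}$ harmless) together with a vector field $X$ (via $\lrcorner \psi$), the symbol decomposes into a block acting on symmetric $2$-tensors and a block acting on $1$-forms. On the symmetric block, $\sigma_\xi(-\Ric)$ plus the DeTurck term reduces at top order to the rough Laplacian $|\xi|^2$ — this is, up to the $a$- and $\lambda$-perturbations and lower order, the classical Ricci--DeTurck symbol — and adding $a\,\sigma_\xi(\mathcal{L}_{\Vop T}g)$ and $\lambda\,\sigma_\xi(F)$ perturbs it; the inequalities $0 \le b_1 - a - 1 < 4$ and $|\lambda| < \tfrac14 c$ with $c = 1 - \tfrac14(b_1-a-1) > 0$ are precisely what keeps this block positive definite. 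On the $1$-form block, $\sigma_\xi(b_1 \Div T + b_2 \Div T^t)$ followed by $\lrcorner \psi$, together with the $\curl W$ DeTurck contribution, is positive definite exactly when $b_1 + b_2 \ge 1$. The relevant symbol formulas are those of \cite[Prop.~6.6]{dgk-flows}; the lower-order terms play no role here.

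Once strict parabolicity of the $\G2$-DeTurck flow is in hand, short-time existence and uniqueness of a smooth solution $\td\g2(t)$, $t \in [0,\varepsilon)$, with $\td\g2(0) = \g2_0$, follows from standard theory for quasilinear parabolic systems on a compact manifold (linearization and the inverse function theorem in parabolic Hölder spaces, or Hamilton's version of the Nash--Moser theorem), with parabolic bootstrapping giving smoothness. To return to \eqref{eq:g2flow_general}, solve the ODE for the family of diffeomorphisms $\phi_t$ generated by $-W$ with $\phi_0 = \mathrm{id}$, and set $\g2(t) = \phi_t^* \td\g2(t)$; the identity $\ptt(\phi_t^*\td\g2) = \phi_t^*\big(\ptt\td\g2 + \mathcal{L}_{-W}\td\g2\big)$ together with the equivariance of $\mathcal{P}$ shows that $\g2(t)$ solves \eqref{eq:g2flow_general} with the correct initial value. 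Uniqueness for the original flow is then obtained by the usual device: given any solution $\g2(t)$, run a harmonic-map-type heat flow relative to the background connection to produce diffeomorphisms whose pullbacks of $\g2(t)$ solve the DeTurck flow, invoke uniqueness there, and pull back again; hence any two solutions of \eqref{eq:g2flow_general} coincide. In the present paper this theorem is quoted verbatim from \cite{dgk-flows}, so all that remains in order to apply it to the Ricci-harmonic flow \eqref{heatfloweqn} is to read off $a = 0$, $\lambda = 0$, $b_1 = 1$, $b_2 = 0$, whence $b_1 - a - 1 = 0 \in [0,4)$, $b_1 + b_2 = 1 \ge 1$, $c = 1$, and $|\lambda| = 0 < \tfrac14$, verifying all hypotheses.
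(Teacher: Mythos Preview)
Your proposal is correct and aligns with the paper's treatment: the theorem is quoted verbatim from \cite[Thm.~6.15]{dgk-flows} without proof, and your sketch of the DeTurck-type argument and symbol analysis is precisely the strategy carried out in that reference. The final paragraph, reading off $a=0$, $\lambda=0$, $b_1=1$, $b_2=0$ and checking the numerical hypotheses, matches exactly what the paper does to deduce Theorem~\ref{thm:RHFste}.
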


Comparing the expression of \eqref{eq:g2flow_general} with \eqref{heatfloweqn} we have $a=0, \lambda=0, b_1=1$ and $b_2=0$ which satisfy the conditions of the above theorem. Thus, we immediately get,

\begin{theorem}\label{thm:RHFste}
Given an initial $\G2$-structure $\g2_0$, the Ricci-harmonic flow of $\G2$-structures \eqref{heatfloweqn} admits a unique solution for a short time $[0, \varepsilon)$ on a compact manifold $M$ with $\varepsilon$ depending on the initial conditions. \qed
\end{theorem}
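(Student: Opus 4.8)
The plan is to deduce Theorem~\ref{thm:RHFste} directly from the general short-time existence and uniqueness result Theorem~\ref{thm:dgkmainthm} of \cite{dgk-flows}, rather than redoing the DeTurck-type gauge fixing and the quasilinear parabolic analysis from scratch (which would only reprove a special case of that theorem). Theorem~\ref{thm:dgkmainthm} applies to any flow of $\G2$-structures that can be written in the form \eqref{eq:g2flow_general} with structural constants $a, b_1, b_2, \lambda$ satisfying the three displayed inequalities, so the entire task is to exhibit the Ricci-harmonic flow \eqref{heatfloweqn} as such an instance.

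First I would rewrite \eqref{heatfloweqn} by absorbing the algebraic terms $3 T^t T - |T|^2 g$ into the ``lower order terms'' slot of \eqref{eq:g2flow_general}. This is legitimate because, by \eqref{eq:delphi}, the torsion $T$ is first order in $\g2$, so $T^t T$ and $|T|^2 g$ are polynomial in $\g2$, $\psi$, $g^{-1}$ and $T$, involve neither curvature nor any derivative of $T$, and therefore do not affect the principal symbol of the right-hand side --- which is exactly what is meant by a lower order term in \eqref{eq:g2flow_general}. Verifying this membership is the one step that warrants a moment's attention and is the closest thing to an obstacle; everything else is bookkeeping. Matching the remaining terms then forces $a = 0$, $\lambda = 0$, $b_1 = 1$, $b_2 = 0$.

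It only remains to check the hypotheses of Theorem~\ref{thm:dgkmainthm} for these values: $b_1 - a - 1 = 0$, so $0 \le b_1 - a - 1 < 4$; $b_1 + b_2 = 1 \ge 1$; and $c = 1 - \tfrac{1}{4}(b_1 - a - 1) = 1 > 0$ with $|\lambda| = 0 < \tfrac{1}{4} c$. All three hold, so Theorem~\ref{thm:dgkmainthm} yields some $\varepsilon > 0$ and a unique smooth one-parameter family $\g2(t)$, $t \in [0,\varepsilon)$, solving \eqref{heatfloweqn}, with $\varepsilon$ depending only on $\g2_0$ and no condition imposed on $\g2_0$ beyond being a $\G2$-structure. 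One can note in passing that replacing the coefficient $1$ of $\Div T$ by $3$, as discussed after Definition~\ref{heatflowdefn}, gives $b_1 = 3$ and still satisfies all the inequalities, so that modification would also be covered.
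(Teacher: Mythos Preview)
Your proposal is correct and follows essentially the same approach as the paper: the paper simply observes that comparing \eqref{heatfloweqn} with \eqref{eq:g2flow_general} yields $a=0$, $\lambda=0$, $b_1=1$, $b_2=0$, notes these satisfy the hypotheses of Theorem~\ref{thm:dgkmainthm}, and concludes. Your additional remark about the coefficient $3$ in front of $\Div T$ is exactly the content of the paper's Remark~\ref{rmk;3divste}.
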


\begin{remark}\label{rmk;3divste}
We remark that Theorem~\ref{thm:RHFste} still remains true if one chooses $3\Div T$ instead of $\Div T$ for the family of vector fields defining the Ricci-harmonic flow. \demo
\end{remark}

Thus, the Ricci-harmonic flow of $\G2$-structures is well-posed. A desirable property of a geometric flow is if it is a gradient flow of some functional. This was the case for the harmonic flow of $\G2$-structures \cite{dgk-isometric} (and harmonic flow of Spin(7)-structures as well \cite{dle-isometric}), the Laplacian flow of closed $\G2$-structures \cite{bryant-remarks}, \cite{lotay-wei-gafa}, the heat flow of Weiss--Witt \cite{weiss-witt} and other families of flows of $\G2$-structures considered in \cite{dgk-flows}. This is unlike the situation with the Ricci flow of metrics which is not a gradient flow of a functional on the space of Riemannian metrics and one of the major breakthroughs of Perelman \cite{perelman-1} was to prove that the Ricci flow can indeed be viewed as a gradient flow but on an enlarged space. We show below that there is \emph{no} diffeomorphism invariant functional on the space $\Omega^3_{+}$ of $\G2$-structures whose gradient is the Ricci-harmonic flow \eqref{heatfloweqn}. This essentially follows from a result of Gianniotis--Zacharopoulos \cite{panos-georgeG2Hilbert}. We first restate their result.

\begin{proposition}
	\emph{\cite[Prop. 3.3]{panos-georgeG2Hilbert}.}
	Let $M^7$ be compact with a $\G2$-structure $\g2$. Suppose that $\mathcal{F} : \Omega^3_{+} \rightarrow \bR$ is a diffeomorphism invariant functional, that is, $\mathcal{F}(\Phi^*\g2)=\mathcal{F}(\g2)$ for any diffeomorphism $\Phi$ of $M$, and that there are second order quasilinear differential operators $Q_1 : \Omega^3_+ \rightarrow S^2$ and $Q_2 : \Omega^3_+ \rightarrow \Omega^1$ such that for any variation $(\g2(t))_{t\in (-\epsilon, \epsilon)}$ of $\G2$-structures with $\g2(0)=\g2$ and $\left. \frac{d\g2}{dt}\right |_{t=0} = h\diamond \g2 + X\lrcorner \psi$ and
	\begin{align}\label{eq:diffinvfunctionaleq1}
		\left.\frac{d\mathcal{F}(\g2(t))}{dt}\right|_{t=0}= \int_M \langle h, Q_1(\g2)\rangle + \langle X, Q_2(\g2)\rangle \vol_{\g2}.
	\end{align}
Then, we have 
\begin{align}\label{eq:diffinvfunctionaleq2}
	Q_1(\g2) &= \alpha \Ric + \beta Rg + \gamma \cL_{\Vop T}g + \zeta F + \text{l.o.ts} \nonumber \\
	Q_2(\g2)&= \delta \Div T + \epsilon \del \tr T + \text{l.o.ts.},
\end{align}
where $R$ is the scalar curvature and $l.o.ts.$ mean lower order terms and the coefficients must satisfy 
\begin{align}\label{eq:diffinvfunctionaleq3}
\frac{\alpha}{2}+\beta -\frac{\gamma}{2}+\frac{\delta}{4}=0 \ \ \ \ \ \ \text{and}\ \ \ \ \gamma+\frac{\delta}{2}=0.	
\end{align}
\end{proposition}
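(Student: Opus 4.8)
The plan is to derive both the stated normal form of $Q_1, Q_2$ and the linear constraints \eqref{eq:diffinvfunctionaleq3} from two inputs: first, a Bianchi-type identity for the pair $(Q_1,Q_2)$ that is forced purely by the diffeomorphism invariance of $\cF$; and second, the classification of second-order differential invariants of a $\G2$-structure from \cite{dgk-flows}, which pins down $Q_1$ and $Q_2$ up to lower order terms. Matching the top-order parts of the Bianchi-type identity against this normal form then produces the relations.

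For the Bianchi-type identity, fix a vector field $Y$ with flow $\Phi_t$ and take the variation $\g2(t) = \Phi_t^* \g2$. Diffeomorphism invariance gives $\cF(\g2(t)) \equiv \cF(\g2)$, so its $t$-derivative at $0$ vanishes; by \eqref{eq:liederivativephi} this derivative equals $\cL_Y\g2 = \tfrac12 \cL_Y g \diamond \g2 + (-\tfrac12 \curl Y + Y\lrcorner T)\lrcorner \psi$, so in the notation of \eqref{eq:diffinvfunctionaleq1} we have $h = \tfrac12 \cL_Y g$ and $X = -\tfrac12 \curl Y + Y\lrcorner T$. Plugging into \eqref{eq:diffinvfunctionaleq1} and integrating by parts — using $(\cL_Y g)_{ij} = \del_i Y_j + \del_j Y_i$, $(\curl Y)_k = \del_i Y_j \g2_{ijk}$ and \eqref{eq:delphi} to move all derivatives off $Y$ — turns $0 = \int_M \langle h, Q_1\rangle + \langle X, Q_2\rangle\,\vol$ into $\int_M \langle Y,\, \Div Q_1 + c\,\curl Q_2 + T\cdot Q_2 + \text{l.o.t.}\rangle\,\vol = 0$ for all $Y$, with $c$ the explicit constant coming from the $\tfrac12 \cL_Y g$ and $-\tfrac12 \curl Y$ terms; hence the pointwise identity $\Div Q_1 = -c\,\curl Q_2 - T\cdot Q_2 + \text{l.o.t.}$ Since $T\cdot Q_2$ and the remaining terms have lower order than a genuine third-order operator, comparing the third-order parts gives a relation of the form $[\Div Q_1]_3 + c\,[\curl Q_2]_3 = 0$.

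Next, since $\cF$ is diffeomorphism invariant its $L^2$-gradient $(Q_1, Q_2)$ is a natural (equivariant) operator, and being second-order quasilinear by hypothesis it is classified, up to lower order terms, by the list of second-order differential invariants of a $\G2$-structure in \cite{dgk-flows}: $Q_1 = \alpha \Ric + \beta Rg + \gamma \cL_{\Vop T}g + \zeta F + \text{l.o.t.}$ and $Q_2 = \delta \Div T + \epsilon\,\del\tr T + \text{l.o.t.}$, where we have used that $\Div T^t$ and $\del\tr T$ agree up to lower order terms by the contracted $\G2$-Bianchi identity (cf.\ Remark~\ref{rem:RHFcoclos}). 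It then remains to reduce each third-order quantity to a common basis. Using the Riemannian contracted second Bianchi identity, $[\Div \Ric]_3 = \tfrac12 [\del R]_3$, $[\Div(Rg)]_3 = [\del R]_3$, and $[\Div(\cL_{\Vop T}g)]_3 = \Delta(\Vop T) + \del\Div(\Vop T)$; by \eqref{eq:scalarcurv}, $[\del R]_3 = -2\,\del\Div(\Vop T)$; a short computation with the $\G2$-Bianchi identity \eqref{eq:G2Bianchi} and \eqref{eq:phiwithphi1} shows that $[\Div F]_3 = 0$ and $[\curl(\Div T)]_3 = \Delta(\Vop T) + \del\Div(\Vop T)$, while $[\curl(\del\tr T)]_3 = 0$ since $\del\tr T$ is the differential of a function. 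Substituting into $[\Div Q_1]_3 + c\,[\curl Q_2]_3 = 0$ and collecting the coefficients of the two linearly independent third-order tensors $\Delta(\Vop T)$ and $\del\Div(\Vop T)$ yields exactly the two relations in \eqref{eq:diffinvfunctionaleq3}; note that $\zeta$ and $\epsilon$ drop out, consistent with their absence there.

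The main obstacle is the last, purely computational step: one must verify that, modulo lower order terms, all the third-order quantities arising from $\Div\Ric$, $\Div F$, $\Div(\cL_{\Vop T}g)$ and $\curl(\Div T)$ lie in the two-dimensional span of $\Delta(\Vop T)$ and $\del\Div(\Vop T)$ — in particular that $\Div F$ is lower order and that the curl of $\Div T$ can be rewritten via \eqref{eq:G2Bianchi} — and one must keep careful track of the numerical constants (the factors $\tfrac12$ and $\tfrac14$ in \eqref{eq:diffinvfunctionaleq3}) as they propagate through the coefficients in \eqref{eq:liederivativephi}, the integrations by parts, the Riemannian and $\G2$-Bianchi identities, the sign convention fixed by \eqref{eq:ricciiden}, and \eqref{eq:scalarcurv}. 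A secondary point that requires care is citing the precise form of the classification in \cite{dgk-flows} invoked to put $Q_1$ and $Q_2$ into the stated normal form.
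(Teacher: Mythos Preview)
Your approach is essentially the same as the one sketched in the paper: use diffeomorphism invariance to obtain $D\cF_{\g2}(\cL_Y\g2)=0$, expand via \eqref{eq:liederivativephi}, invoke the classification of second-order invariants from \cite{dgk-flows} for the normal form \eqref{eq:diffinvfunctionaleq2}, and then compare highest-order terms after integration by parts to extract \eqref{eq:diffinvfunctionaleq3}. You supply more of the computational detail (reducing the third-order part to the span of $\Delta(\Vop T)$ and $\del\Div(\Vop T)$), and you correctly flag that the delicate point is the bookkeeping of constants and the verification that $[\Div F]_3$ and $[\curl(\del\tr T)]_3$ drop out; the paper itself defers these details to \cite{panos-georgeG2Hilbert}.
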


We refer the readers to \cite{panos-georgeG2Hilbert} for a proof of the proposition but we mention the main idea. The expressions for $Q_1$ and $Q_2$ up to lower order terms follow from the classification all second order differential invariants of a $\G2$-structure which can be made into a $3$-form in \cite{dgk-flows}. For deriving \eqref{eq:diffinvfunctionaleq3} we notice that since $\cF$ is diffeomorphism invariant, we must have $D \cF_{\g2}(\cL_{Y}\g2)=0$ for any vector field $Y$ on $M$. It, then, follows from \eqref{eq:liederivativephi} that 
\begin{align*}
	\int_M \left\langle Q_1, \frac 12 \cL_{Y}g \right\rangle  + \left \langle Q_2, -\frac 12 \curl Y + Y\lrcorner T \right \rangle \vol =0,
\end{align*}
and then one can compute and compare the highest order terms to arrive at \eqref{eq:diffinvfunctionaleq3}. Comparing \eqref{eq:diffinvfunctionaleq2} with the Ricci-harmonic flow \eqref{heatfloweqn} we see that $\alpha=-1, \beta=0, \gamma=0, \zeta =0, \delta =1$ and $\epsilon=0$ and they do not satisfy \eqref{eq:diffinvfunctionaleq3}. As a result, we have the following,

\begin{proposition}\label{prop:RHFisnotgradient}
	The Ricci-harmonic flow of $\G2$-structures defined in \eqref{heatfloweqn} is not a gradient flow of any diffeomorphism invariant functional on the space of $\G2$-structures. \qed
	\end{proposition}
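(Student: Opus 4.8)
The plan is to directly apply the preceding Proposition by Gianniotis--Zacharopoulos, which states that any diffeomorphism invariant functional $\mathcal{F} : \Omega^3_+ \to \bR$ must have its gradient satisfy the constraints \eqref{eq:diffinvfunctionaleq3} relating the coefficients $\alpha, \beta, \gamma, \delta$. So the whole argument is a matter of matching the Ricci-harmonic flow \eqref{heatfloweqn} against the general form \eqref{eq:diffinvfunctionaleq2} and checking that the constraints fail.

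First I would argue by contradiction: suppose there exists a diffeomorphism invariant functional $\mathcal{F}$ on $\Omega^3_+$ whose negative gradient flow is \eqref{heatfloweqn}. Then the right-hand side of \eqref{heatfloweqn}, written as $h \diamond \g2 + X \lrcorner \psi$, must equal $-(Q_1(\g2) \diamond \g2 + Q_2(\g2) \lrcorner \psi)$ (or $+$, depending on sign convention — either way the coefficient constraints are what matter), where $Q_1, Q_2$ have the form dictated by \eqref{eq:diffinvfunctionaleq2}. Reading off the highest-order (second-order in $\g2$) terms, the $\Ric$ term forces $\alpha = -1$; there is no $Rg$ term, no $\cL_{\Vop T}g$ term and no $F$ term, so $\beta = \gamma = \zeta = 0$; the vector-field part has $\Div T$ with coefficient $1$ and no $\del \tr T$ term, so $\delta = 1$ and $\epsilon = 0$. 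One subtlety to address: the $3T^tT - |T|^2 g$ piece and the fact that the velocity also contains $\Div T^t$ implicitly — but these are all \emph{lower order} (first order in $\g2$, i.e.\ quadratic in torsion with no derivatives on $T$), hence they sit inside the ``l.o.ts.'' of \eqref{eq:diffinvfunctionaleq2} and do not affect the leading coefficients $\alpha,\beta,\gamma,\zeta,\delta,\epsilon$. I should note this explicitly so the reader sees the matching is legitimate.

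Then I substitute $\alpha = -1$, $\beta = 0$, $\gamma = 0$, $\delta = 1$ into the two constraints in \eqref{eq:diffinvfunctionaleq3}. The second constraint reads $\gamma + \tfrac{\delta}{2} = 0 + \tfrac{1}{2} = \tfrac12 \neq 0$, already a contradiction. (For completeness one can also note the first constraint gives $-\tfrac12 + 0 - 0 + \tfrac14 = -\tfrac14 \neq 0$, also violated.) This contradiction shows no such $\mathcal{F}$ exists, proving Proposition~\ref{prop:RHFisnotgradient}.

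There is essentially no main obstacle here — the hard work was done in \cite{dgk-flows} (classification of second order invariants) and in \cite{panos-georgeG2Hilbert} (the diffeomorphism-invariance constraint). The only point requiring a sentence of care is confirming that replacing $\Div T$ by $3\Div T$ (as remarked in Remark~\ref{rmk;3divste}) would change $\delta$ to $3$ but still violate $\gamma + \tfrac{\delta}{2} = 0$, so the conclusion is robust to that normalization choice; it may be worth a parenthetical remark but is not needed for the proof as stated.
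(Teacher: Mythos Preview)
Your proposal is correct and follows exactly the paper's argument: read off the leading coefficients $\alpha=-1$, $\beta=\gamma=\zeta=0$, $\delta=1$, $\epsilon=0$ from \eqref{heatfloweqn} and observe they violate the constraints \eqref{eq:diffinvfunctionaleq3}. One small slip: your parenthetical that the velocity ``contains $\Div T^t$ implicitly'' and that this is lower order is off --- $\Div T^t$ is second order in $\g2$, not lower order --- but since the flow has no $\Div T^t$ term at all this does not affect the argument.
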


The fact that the Ricci-harmonic flow is not a gradient flow of any diffeomorphism invariant functional on the space of $\G2$-structures is similar to the case of the Ricci flow of metrics which is not a gradient flow of any functional on just the space of Riemannian metrics and one needs to enlarge the space in consideration to view the Ricci flow as a gradient of functionals which was understood after the pioneering work of Perelman \cite{perelman-1}. This leads to the following interesting question.

\begin{question}
 Can one define analogs of Perelman's $\cF$ and $\mathcal{W}$ functionals, possibly on an enlarged space containing $\Omega^3_+(M)$, so that the Ricci-harmonic flow of $\G2$-structures can be viewed as the gradient flow of such functionals and the functionals are monotonic along the flow?
\end{question}

The $\cF$ and $\mathcal{W}$-functionals of Perelman have origins in the study of steady and shrinking gradient Ricci solitons respectively so maybe similar ideas might be helpful in answering the above question. In \textsection~\ref{sec:solitons}, we study solitons of the Ricci-harmonic flow and derive various important identities which we believe will be helpful in answering the previous question.
	
\begin{remark}\label{rmk:3divgrad}
We again remark that Proposition~\ref{prop:RHFisnotgradient} remains true even if we choose $3\Div T$ instead of $\Div T$ in \eqref{heatfloweqn}.\demo
\end{remark}

We recall that a $\G2$-structure is called \emph{nearly $\G2$} if $d\g2=\lambda \psi$ and $d\psi=0$, where $\lambda$ is a constant. In this case, $T_1$ is the only non-zero torsion component and it is a constant. Suppose $\g2_0$ is a nearly $\G2$-structure with $T_{\g2_{0}}=cg_0$. We have the following proposition.

\begin{proposition}\label{prop:RHFonNG2}
Let $(M^7, \g2_0)$ be a nearly $\G2$-manifold and let $T_{\g2_{0}}=cg_0$. The Ricci-harmonic flow \eqref{heatfloweqn} with $\g2_0$ as initial condition exists for finite time interval $[0, \tau]$ where $\tau=\tfrac{1}{20c^2}$, and remains nearly $\G2$ for its time of existence. As such, nearly $\G2$-structures are {\bf{shrinking}} solutions of the flow.	
\end{proposition}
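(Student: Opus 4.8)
The plan is to reduce the flow to a scalar ODE via an explicit homothety ansatz and then invoke the uniqueness in Theorem~\ref{thm:RHFste}. First I would record the standard facts about a nearly $\G2$-structure $\g2_0$: the only nonzero intrinsic torsion form is $T_1$, so $T_{\g2_0}=c\,g_0$ for a nonzero constant $c$; consequently $(\Vop T)_k=c\,(g_0)_{ij}\g2_{0\,ijk}=0$ and $c$ is spatially constant, so $\Div T_{\g2_0}=0$; from \eqref{eq:scalarcurv} the scalar curvature is $R_0=6|T_1|^2=42c^2$; and $g_0$ is Einstein (a well-known consequence of admitting a real Killing spinor), hence $\Ric(g_0)=\tfrac{R_0}{7}g_0=6c^2 g_0$. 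The key observation is then that at \emph{any} nearly $\G2$-structure $\g2$ with $T=cg$ the entire right-hand side of \eqref{heatfloweqn} is a multiple of $\g2$: since $T$ is symmetric, $T^tT=c^2 g$ and $|T|^2 g=7c^2 g$, so $-\Ric+3T^tT-|T|^2 g=(-6c^2+3c^2-7c^2)g=-10c^2 g$, while $\Div T=0$; using $g\diamond\gamma=k\gamma$ for a $k$-form, the right-hand side equals $-30c^2\g2$.

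Next I would make the ansatz $\g2(t)=\rho(t)\g2_0$ with $\rho(0)=1$, $\rho>0$ and $\rho$ spatially constant, and track the induced data under this constant rescaling. From \eqref{eq:metricfromphi} one gets $g(t)=\rho(t)^{2/3}g_0$, and under the conformal change $g_0\mapsto\rho^{2/3}g_0$ the Hodge star on $3$-forms in dimension $7$ is multiplied by $\rho^{1/3}$, so $\psi(t)=\rho(t)^{4/3}\psi_0$. Hence $d\g2(t)=\rho\,d\g2_0=\rho\lambda_0\psi_0=\lambda_0\rho^{-1/3}\psi(t)$ and $d\psi(t)=\rho^{4/3}d\psi_0=0$, so $\g2(t)$ stays nearly $\G2$ for every $t$ with $\rho(t)>0$. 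The torsion $2$-tensor rescales by $\rho^{1/3}$, so $|T(t)|^2_{g(t)}=\rho^{-2/3}|T_0|^2_{g_0}=7c^2\rho^{-2/3}$, which forces the nearly-$\G2$ constant of $\g2(t)$ to satisfy $c(t)^2=c^2\rho(t)^{-2/3}$.

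Feeding $\g2(t)=\rho(t)\g2_0$ into \eqref{heatfloweqn}, the left-hand side is $\dot\rho\,\g2_0=\tfrac{\dot\rho}{\rho}\g2(t)$ and, by the key observation applied at $\g2(t)$, the right-hand side is $-30c(t)^2\g2(t)=-30c^2\rho^{-2/3}\g2(t)$, so the flow is equivalent to the ODE
\begin{align*}
\dot\rho=-30c^2\rho^{1/3},\qquad \rho(0)=1.
\end{align*}
Separating variables gives $\rho(t)=(1-20c^2 t)^{3/2}$, which is positive precisely on $[0,\tau)$ with $\tau=\tfrac{1}{20c^2}$, and $\rho(t)\to 0$ as $t\to\tau$. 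By the uniqueness in Theorem~\ref{thm:RHFste}, $\g2(t)=(1-20c^2 t)^{3/2}\g2_0$ is \emph{the} solution of \eqref{heatfloweqn} with initial datum $\g2_0$ on its maximal interval of existence $[0,\tau)$; it is nearly $\G2$ throughout, and its metric $g(t)=(1-20c^2 t)g_0$ is a homothety collapsing to a point at $\tau$, so $\g2(t)$ is a self-similar \textbf{shrinking} solution, exactly as a positive Einstein metric behaves under Ricci flow.

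The main obstacle is the bookkeeping in the second paragraph: getting the scaling exponents for $g$, $\psi$ and $T$ under a constant rescaling of $\g2$ exactly right, so that the nearly-$\G2$ condition is genuinely preserved along the ansatz and the constant $c(t)$ enters the ODE with the correct power $\rho^{-2/3}$ (this is what produces the coefficient $20$ rather than the $12$ one would get from the naive $\ptt g=-2\Ric$). The curvature and torsion identities at a nearly $\G2$-point, and the integration of the ODE, are then routine.
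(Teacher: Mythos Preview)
Your proof is correct and follows essentially the same route as the paper: both exhibit the homothetic solution $\g2(t)=(1-20c^2t)^{3/2}\g2_0$ and verify it solves \eqref{heatfloweqn} using $\Ric(g_0)=6c^2g_0$, $T^tT=c^2g_0$, $|T|^2g=7c^2g_0$ and $\Div T=0$. The only cosmetic difference is that you \emph{derive} the scaling factor via the ODE $\dot\rho=-30c^2\rho^{1/3}$ after tracking the rescalings $g(t)=\rho^{2/3}g_0$, $\psi(t)=\rho^{4/3}\psi_0$, $T(t)=\rho^{1/3}T_0$, whereas the paper writes down $\rho(t)=(1-20c^2t)^{3/2}$ directly and checks both sides; your explicit appeal to the uniqueness in Theorem~\ref{thm:RHFste} and the verification that $d\g2(t)=\lambda_0\rho^{-1/3}\psi(t)$, $d\psi(t)=0$ (so the solution stays nearly $\G2$) are welcome additions that the paper leaves implicit.
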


\begin{remark}
It can be computed that $c$ in the statement of the proposition is $\frac{\tr T}{7}$ and $d\g2_0=\tfrac 47 \tr T \psi$.\demo
\end{remark}	

\begin{proof}
It is well-known (see for instance, \cite[\textsection 2]{dwivedi-singhal}) that the Ricci curvature of $\g2_0$ is given by $\Ric_{g_0} = 6c^2g_0$. Consider the family of $\G2$-structures 
\begin{align}\label{eq:ng2aux1}
\g2(t)= \left(1-20c^2t\right)^{\frac 32} \g2_0.
\end{align}
Clearly $\g2(0)=\g2_0$. Since $\g2(t)$ is a scaling of $\g2_0$, it follows that 
\begin{alignat}{6}
	&g(t)&&= \left(1-20c^2t\right)  g_0, \ \ \    &&g^{-1}(t) &&=  \left(1-20c^2t\right) ^{-1}g_{0}^{-1}  \ \ \ &\Ric(g(t))&&=6c^2g_0,\nonumber \\
	&T(t)&&= c \left(1-20c^2t\right)^{\frac 12} g_0,  \ \ \  &&|T(t)|^2g(t)&&=7c^2g_0,\ \ \text{and}\ \  & \Div T(t)&&=0. \nonumber
	\end{alignat}
Thus, we see that 
\begin{align*}
\ptt \g2(t) = -30c^2 \left(1-20c^2t\right)^{\frac 12}\g2_0,
\end{align*}
and 
\begin{align*}
(-\Ric + 3T^tT-|T|^2g)\diamond \g2 = (-10c^2g_0)\diamond_t \g2(t)=-10c^2\left(1-20c^2t\right)^{\frac 12}(g_0\diamond_0\g2_0)= -30c^2 \left(1-20c^2t\right)^{\frac 12}\g2_0.
\end{align*}

\end{proof}

\begin{remark}
Nearly $\G2$-manifolds are positive Einstein so the behaviour of nearly $\G2$-structures along the Ricci-harmonic flow is similar to the behaviour of positive Einstein metrics along the Ricci flow where the latter are shrinking solutions of the flow. \demo
\end{remark}


\section{Evolution equations and Shi-type estimates}\label{sec:evoleqns}

We start this section  by computing the evolution of the Riemann curvature tensor, the torsion tensor and the covariant derivative of the torsion tensor along the Ricci-harmonic flow. Throughout this section (unless stated otherwise), we will use the following convention. If $A$ and $B$ are two tensors on $(M, \g2, g)$ then $A*B$ will denote any quantity obtained from $A\otimes B$ by contracting using the metric $g$  or its inverse $g^{-1}$  and multiplication by constants depending only on the ranks of $A$ and $B$ and the dimension of the manifold which in our case is $7$. We will use the same letter $C$ for various constants which differ from line to line to declutter the text but all the constants will depend on the same quantities and will be prescribed beforehand. 

\medskip

It was proved by Karigiannis \cite[\textsection 3]{flows1} that if a family of $\G2$-structures $\g2(t)$ evolve by
\begin{align*}
	\frac{\pt}{\pt t}\g2(t)=(h(t)\diamond \g2(t)) + X(t)\lrcorner \psi(t)
\end{align*}
for a family of symmetric $2$-tensors $h(t)$ and a family of vector fields $X(t)$ on $M$ then we have the following evolution equations:
\begin{align}\label{eq:genevoleqns}
	\begin{split}
	\frac{\pt}{\pt t} g(t)_{ij}&= 2h(t)_{ij}, \\
	\frac{\pt}{\pt t}g^{-1}(t)^{ij} &= - 2h^{ij},\ \ \ \text{with}\ \ h^{ij}=g^{ia}g^{jb}h_{ab},  \\
	\frac{\pt}{\pt t}\vol_{g(t)}&= \tr_{g(t)} h(t) \vol_{g(t)},  \\
	\frac{\pt}{\pt t} \psi_{ijkl} &= h_{im}\psi_{mjkl}+h_{jm}\psi_{imkl}+h_{km}\psi_{ijml}+h_{lm}\psi_{ijkm}-X_{i}\g2_{jkl}+X_j\g2_{ikl}-X_k\g2_{ijl}+X_l\g2_{ijk} \\
	\frac{\pt }{\pt t} T_{ij} &=T_{im}h_{mj} + T_{im}X_n\g2_{nmj}+\del_mh_{ni}\g2_{mnj}+\del_iX_j. 
\end{split}
\end{align}

From \eqref{eq:genevoleqns}, along the Ricci-harmonic flow, we have
\begin{align}
	\frac{\pt}{\pt t}g_{ij}& = -2R_{ij}+6 T_{pi}T_{pj} -2|T|^2g_{ij} \label{eq:RHFgevol}, \\
	\frac{\pt}{\pt t}g^{ij}&= 2R^{ij}-6\tensor{T}{_p^i}\tensor{T}{_p^j}+2|T|^2g^{ij}, \label{eq:RHFginvevol}
\end{align}
and the volume form evolves as
\begin{align}
	\frac{\pt}{\pt t} \vol &= \tr( -R_{ij}+3 T_{pi}T_{pj} -|T|^2g_{ij} ) \vol \nonumber \\
	&=(-R+3|T|^2-7|T|^2) \vol \nonumber \\
	&=-(R+4|T|^2) \vol \nonumber \\
	&\overset{\eqref{eq:scalarcurv}}{=} -\left(10|T_1|^2+9|T_7|^2+3|T_{14}|^2+3|T_{27}|^2-2\Div(\Vop T) \right)\vol. \label{eq:RHFvolevol}
\end{align}

Integrating \eqref{eq:RHFvolevol} on compact $M$ proves that the volume of $M$ with respect to $g(t)$ will decrease along the Ricci-harmonic flow \eqref{heatfloweqn}.

\medskip

\subsection{Evolution of curvature quantities}\label{eq:subseccurvevol}
We recall that if a family of metrics $g(t)$ on a manifold evolve by
\begin{align}\label{eq:gengevol}
	\frac{\pt g(t)}{\pt t}=h(t)
\end{align}
for some time-dependent symmetric $2$-tensor $h(t)$, then the Riemann curvature tensor, the Ricci tensor and the scalar curvature of the underlying metric evolve by (see, for instance, \cite[Chapter 3]{Chow-Knopf})
\begin{align}
	\frac{\pt }{\pt t}R_{ijk}^{\phantom{ijk}l} &= \frac 12 g^{lp}\left(\del_i\del_kh_{jp}+\del_j\del_ph_{ik}-\del_i\del_ph_{jk}-\del_j\del_kh_{ip}-R_{ijk}^{\phantom{ijk}q}h_{qp}-R_{ijp}^{\phantom{ijp}q}h_{kq}\right), \label{eq:genRiemevol} \\
	\frac{\pt}{\pt t} R_{jk}&= \frac 12 g^{pq}\left(\del_q\del_jh_{kp}+\del_q\del_kh_{jp}-\del_q\del_ph_{jk}-\del_j\del_kh_{pq} \right), \label{eq:genRicevol} \\
	\frac{\pt}{\pt t} R&= - \Delta \tr_g (h) + \Div (\Div h) - \langle h, \Ric \rangle. \label{eq:genscalevol}
\end{align}
Here $\Delta$ is the ``analyst's Laplacian'' and its expression in local coordinates is $\Delta=\del_i\del^i$, and $\Div (h)_{i}= \del^jh_{ji}$. 

Using \eqref{eq:RHFgevol} and \eqref{eq:genRiemevol}, we have
\begin{align*}
	\frac{\pt}{\pt t}R_{ijk}^{\phantom{ijk}l}&= -\del_i\del_kR_{j}^l-\del_j\del^lR_{ik}+\del_i\del^lR_{jk}+\del_j\del_kR_{i}^l + \left(R_{ijkq}\tensor{R}{_q^l}+\tensor{R}{_i_j^l_q}R_{kq}\right)  \nonumber \\
	& \quad -3\left(R_{ijkq}T_{pq}T_{p}^{\phantom{p}l}+\tensor{R}{_i_j^l_q}T_{pk}T_{pq}\right) \nonumber \\
	& \quad - \left(\del_i\del_k |T|^2 \tensor{g}{_j^l} + \del_j\del^l |T|^2g_{ik}-\del_i\del^l |T|^2g_{jk}-\del_j\del_k |T|^2\tensor{g}{_i^l}\right) \nonumber \\
	& \quad  + 3 \left(\del_i\del_k(T_{pj}\tensor{T}{_p^l})+\del_j\del^l(T_{pi}T_{pk})-\del_i\del^l(T_{pj}T_{pk})-\del_j\del_k(T_{pi}\tensor{T}{_p^l})\right).
\end{align*}
The first six terms in the above equation are contributions from the $-2\Ric$ term and hence they can be analysed in the same way as the evolution of the Riemann curvature tensor in the Ricci flow case (see \cite[Lemma 6.13]{Chow-Knopf}) where we apply the Riemannian first and second Bianchi identities and the Ricci identity \eqref{eq:ricciiden}, to get
\begin{align}\label{eq:Riemevol1}
\frac{\pt}{\pt t}R_{ijk}^{\phantom{ijk}l}&=\Delta R_{ijk}^{\phantom{ijk}l}+\left(R_{ijpr}R_{rpk}^{\phantom{rpk}l}-2R_{pikr}R_{jpr}^{\phantom{jpr}l}+2R_{pir}^{\phantom{pir}l}R_{jpkr}\right) -R_{ip}R_{pjk}^{\phantom{pjk}^l}-R_{jp}R_{ipk}^{\phantom{ipk}^l}-R_{kp}R_{ijp}^{\phantom{ijp}^l} \nonumber \\
& \quad +R_{p}^{\phantom{p}l}R_{ijkp}  -3\left(R_{ijkq}T_{pq}T_{p}^{\phantom{p}l}+\tensor{R}{_i_j^l_q}T_{pk}T_{pq}\right) \nonumber \\
& \quad  - \left(\del_i\del_k |T|^2 \tensor{g}{_j^l} + \del_j\del^l |T|^2g_{ik}-\del_i\del^l |T|^2g_{jk}-\del_j\del_k |T|^2\tensor{g}{_i^l}\right) \nonumber \\
& \quad  + 3 \left(\del_i\del_k(T_{pj}\tensor{T}{_p^l})+\del_j\del^l(T_{pi}T_{pk})-\del_i\del^l(T_{pj}T_{pk})-\del_j\del_k(T_{pi}\tensor{T}{_p^l})\right).
\end{align}

The above equation can be schematically written as 
\begin{align}
	\frac{\pt}{\pt t}\Riem&= \Delta \Riem + \Riem * \Riem + \Riem * T*T+ \del T* \del T+ \del^2T * T. \label{eq:Riemevolschem}  
\end{align}

\medskip

Since for any tensor $A$ on $M$ we have $\Delta |A|^2 = 2\langle A, \Delta A\rangle + 2|\del A|^2$, using \eqref{eq:RHFginvevol} and \eqref{eq:Riemevolschem}, we have
\begin{align}
	\frac{\pt }{\pt t}|\Riem|^2&= \frac{\pt}{\pt t}\left(g^{ia}g^{jb}g^{kc}g^{ld}R_{ijkl}R_{abcd}\right) \nonumber \\
	& = \Riem * \Riem * \left(\Ric + T*T\right) + 2\left \langle  \Riem , \frac{\pt}{\pt t} \Riem\right \rangle \nonumber \\
	& \leq \Delta |\Riem |^2 - 2|\del \Riem|^2 + C|\Riem|^3+C|\Riem|^2|T|^2 + C|\Riem||\del  T|^2 + C|\Riem||T||\del^2T|, \label{eq:Rimesqschemevol} 
\end{align}
where $C$ is some universal constant.

\medskip

Using \eqref{eq:genRicevol}, we have
\begin{align}
	\frac{\pt}{\pt t}R_{jk}&= \Delta\left( R_{jk}-3T_{pj}T_{pk}+|T|^2g_{jk}\right) + \del_j\del_k  \left(R+4|T|^2 \right) - \del_p\del_j\left(R_{kp}-3T_{sk}T_{sp}+|T|^2g_{kp}\right) \nonumber \\
	& \qquad -\del_p\del_k \left(R_{jp}-3T_{sj}T_{sp}+|T|^2g_{jp}\right), \label{eq:RHFRicevol}
\end{align}
and the evolution of the scalar curvature is
\begin{align}
	\frac{\pt}{\pt t} R&=-\Delta ( \tr(-2R_{ij}+6 T_{pi}T_{pj} -2|T|^2g_{ij}))+\del_j\del_i( -2R_{ij}+6 T_{pi}T_{pj} -2|T|^2g_{ij}) \nonumber \\
	& \qquad-   R_{ij}(-2R_{ij}+6 T_{pi}T_{pj} -2|T|^2g_{ij} ) \nonumber \\
	&= \Delta R +  6\Delta |T|^2 + 6 \del_j  \del_i (T_{pi}T_{pj}) + 2|\Ric|^2 - 6 R_{ij}T_{pi}T_{pj} + 2|T|^2R, \label{eq:RHFscalevol}
\end{align}
where we used the twice contracted Riemannian second Bianchi identity.

\medskip

\subsection{Evolution of torsion}\label{subsec:torsionevol}

We compute the evolution of the torsion tensor along the Ricci-harmonic flow. Using \eqref{eq:RHFincoord} in the evolution of torsion in \eqref{eq:genevoleqns}, we have
\begin{align}
\frac{\pt}{\pt t}T_{ij}&= T_{im}(-R_{mj}+3T_{pm}T_{pj}-|T|^2g_{mj})+T_{im}\del_aT_{an}\g2_{nmj} + \del_i\del_aT_{aj} \nonumber \\
& \quad + \del_m(-R_{ni}+3T_{pn}T_{pi}-|T|^2g_{ni})\g2_{mnj} \nonumber \\
& = -T_{im}R_{mj}+3T_{im}T_{pm}T_{pj}-|T|^2T_{ij}+T_{im}\del_aT_{an}\g2_{nmj}+\del_i\del_aT_{aj} \nonumber \\
& \quad - \del_mR_{ni}\g2_{mnj}+3\del_m(T_{pn}T_{pi})\g2_{mnj}-\del_m|T|^2\g2_{mij}. \label{eq:torevolaux1}
\end{align}

We use the $\G2$-Bianchi identity \eqref{eq:G2Bianchi} and the Ricci identity \eqref{eq:ricciiden} to compute the Laplacian of the torsion tensor. We have
\begin{align}
\Delta T_{ij}&=\del_a\del_aT_{ij} \nonumber \\
	&=\del_a(\del_iT_{aj}+T_{ab}T_{ic}\g2_{bcj}+\frac 12R_{aibc}\g2_{bcj}) \nonumber \\
	&=\del_a\del_i T_{aj}+(\del_aT_{ab})T_{ic}\g2_{bcj} + T_{ab}(\del_aT_{ic})\g2_{bcj}+T_{ab}T_{ic}T_{al}\psi_{lbcj} + \frac 12 \del_aR_{aibc}\g2_{bcj}+ \frac 12 R_{aibc}T_{al}\psi_{lbcj} \nonumber \\
	&= \del_i\del_aT_{aj}+R_{iq}T_{qj}-R_{aijq}T_{aq}+(\del_aT_{ab})T_{ic}\g2_{bcj} + T_{ab}(\del_aT_{ic})\g2_{bcj} + \frac 12(\del_cR_{ib}-\del_bR_{ic})\g2_{bcj} \nonumber \\
	& \qquad + \frac 12 R_{aibc}T_{al}\psi_{lbcj} \nonumber \\
	&= \del_i\del_aT_{aj}+R_{iq}T_{qj}-R_{aijq}T_{aq}+(\del_aT_{ab})T_{ic}\g2_{bcj} + T_{ab}(\del_aT_{ic})\g2_{bcj} +  \del_cR_{bi}\g2_{bcj}+ \frac 12 R_{aibc}T_{al}\psi_{lbcj} \label{eq:torevolaux2}
\end{align}
where  we used the fact that  $T_{ab}T_{ic}T_{al}$ is symmetric in $b, l$ while $\psi_{lbcj}$ is skew in $b, l$ for the fourth term in the third equality and the once contracted second Bianchi identity in the fourth equality.

Using \eqref{eq:torevolaux1} and \eqref{eq:torevolaux2}, we have
\begin{align}
	\frac{\pt}{\pt t}T_{ij}-\Delta T_{ij}&= -T_{im}R_{mj}+3T_{im}T_{pm}T_{pj}-|T|^2T_{ij}+T_{im}\del_aT_{an}\g2_{nmj}+\del_i\del_aT_{aj} \nonumber \\
	& \quad - \del_mR_{ni}\g2_{mnj}+3\del_m(T_{pn}T_{pi})\g2_{mnj}-\del_m|T|^2\g2_{mij} \nonumber \\
	& \quad -[\del_i\del_aT_{aj}+R_{iq}T_{qj}-R_{aijq}T_{aq}+(\del_aT_{ab})T_{ic}\g2_{bcj} + T_{ab}(\del_aT_{ic})\g2_{bcj} +  \del_cR_{bi}\g2_{bcj} \nonumber \\
	& \qquad + \frac 12 R_{aibc}T_{al}\psi_{lbcj}] \nonumber \\
	&=-T_{im}R_{mj}-R_{im}T_{mj}+3T_{im}T_{pm}T_{pj}-|T|^2T_{ij} + 3\del_m(T_{pn}T_{pi})\g2_{mnj}-\del_m|T|^2\g2_{mij} \nonumber \\
	& \quad + R_{aijq}T_{aq}  -T_{ab}(\del_aT_{ic})\g2_{bcj} -\frac{1}{2}R_{aibc}T_{al}\psi_{lbcj}, \nonumber
\end{align}
and so we get
\begin{align}
	\frac{\pt}{\pt t}T_{ij}& = \Delta T_{ij} -T_{im}R_{mj}-R_{im}T_{mj}+3T_{im}T_{pm}T_{pj}-|T|^2T_{ij} + 3\del_m(T_{pn}T_{pi})\g2_{mnj}-\del_m|T|^2\g2_{mij} \nonumber \\
	& \quad+ R_{aijq}T_{aq}  -T_{ab}(\del_aT_{ic})\g2_{bcj} -\frac{1}{2}R_{aibc}T_{al}\psi_{lbcj}. \label{eq:torevol}
\end{align}

The above equation can be schematically written as
\begin{align}
\frac{\pt}{\pt t}T& = \Delta T + \Riem*T + T*T*T+ \del T*T*\g2 +\Riem*T*\psi. \label{eq:torevolsch}
\end{align}

\begin{remark}
The schematic evolution of the torsion tensor along the Ricci-harmonic flow is the same as the corresponding equation for the Laplacian flow for closed $\G2$-structures \cite[eq. (3.13)]{lotay-wei-gafa}. \demo
\end{remark}

\subsection{Global Shi-type estimates for the Ricci-harmonic flow}\label{subsec:shiest}

In this section, we develop the regularity theory for the solutions of the Ricci-harmonic flow. We recall the following definition from \cite[Def. 1.1]{gaochen-shi}.

\begin{definition}
A flow of $\G2$-structures of the form $\pt_t \g2 = h\diamond \g2 + C\lrcorner \psi$ is called a \emph{reasonable flow} if the flow has a unique solution for short-time and it satisfies the following conditions.
\begin{enumerate}
	\item The underlying metric evolves by 
	\begin{align}\label{eq:genreaf1}
		\frac{\pt}{\pt t}g_{ij} = -2R_{ij} + C + L(T)+T*T.
	\end{align} 
\item The family of vector fields $X$ must be of the form
\begin{align}
	X=C+L(T)+L(\Riem)  + L(\del T) + T*T. \label{eq:genreaf2}
\end{align}	
\item The torsion tensor evolves by
\begin{align}
	\frac{\pt}{\pt t}T = \Delta T + L(T) + L(\del T) + \Riem *T + \del T*T + T*T + T*T*T. \label{eq:genreaf3}
\end{align}
\end{enumerate}
Here. $L$ and $*$ denote linear maps and multi-linear maps in variables other than $\g2$, $\psi$ and $g$ respectively.
\end{definition}

\medskip

Since $X= \Div T$ for the Ricci-harmonic flow, comparing \eqref{eq:genreaf1} with \eqref{eq:RHFgevol}, \eqref{eq:genreaf3} with \eqref{eq:torevol} and using Theorem~\ref{thm:RHFste}, we see that the Ricci-harmonic flow is a reasonable flow of $\G2$-structures. As a consequence, the following local derivative  estimates  for the solutions of the Ricci-harmonic flow follows from \cite[Thm. 2.1]{gaochen-shi}.

\begin{theorem}\label{thm:locderest}
Let $K>0$ and $r>0$. Let $p\in M$ and suppose $\g2(t)$ is a solution to the Ricci-harmonic flow with $t\in\left[0, \frac 1K\right] $, on an open neighbourhood $U\ni p$ with $B_{g(0)}(p, r) \subset U$. If
\begin{align}\label{eq:locderestassump}
	|\Riem|+|\del T|+|T|^2 \leq K
\end{align}	\label{eq:locderestconc}
for all $x\in B_{g(0)}(p, r)$ and $t\in \left[0, \frac 1K\right]$, then for all $k\in \mathbb{N}$, there  exists a constant $C=C(K, r, k)$ such that
\begin{align}
|\del^k \Riem|+|\del^{k+1}T|\leq C(K, r, k)t^{-\frac k2},
\end{align}
on $B_{g(0)}(p, \frac r2) \times \left[0, \frac 1K\right] $.

\end{theorem}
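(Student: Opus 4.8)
Since the Ricci-harmonic flow was just verified to be a reasonable flow of $\G2$-structures with $X=\Div T$, the statement is the corresponding special case of \cite[Thm.~2.1]{gaochen-shi}, and in the paper it suffices to invoke that theorem. For completeness I would also record the direct argument, which is the standard Bernstein--Bando--Shi localization carried out in the $\G2$ setting, entirely parallel to the Laplacian flow case treated in \cite{lotay-wei-gafa} --- whose schematic torsion evolution coincides with ours by the remark following \eqref{eq:torevolsch}.

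\medskip

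\textbf{Setup.} From the hypothesis $|\Riem|+|\del T|+|T|^2\le K$ and \eqref{eq:RHFgevol} one has $|\ptt g|_g\le CK$, so $\int_0^{1/K}|\ptt g|_g\,dt\le C$ with $C$ universal; hence the metrics $g(t)$, $t\in[0,1/K]$, are uniformly equivalent on the ball and $B_{g(t)}(p,\rho)$ is comparable to $B_{g(0)}(p,\rho)$ up to a universal factor. We may thus fix one spatial cutoff $\eta\in C^\infty_c(B_{g(0)}(p,r))$ with $\eta\equiv1$ on $B_{g(0)}(p,r/2)$ and $|\del\eta|^2+|\del^2\eta|\le C/r^2$, and use it for all $t$. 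Differentiating the schematic equations \eqref{eq:Riemevolschem} and \eqref{eq:torevolsch} and commuting $\del$ through $\Delta$ (which only generates curvature factors), one obtains for every $k\ge0$
\begin{align*}
\ptt\del^k\Riem&=\Delta\del^k\Riem+\sum_{a+b=k}\del^a\Riem*\del^b\Riem+\sum_{a+b+c=k}\del^a\Riem*\del^bT*\del^cT+\del^{k+2}T*T+\text{(lower-order in $T$)},\\
\ptt\del^{k+1}T&=\Delta\del^{k+1}T+\sum_{a+b=k+1}\del^a\Riem*\del^bT+\text{(cubic in $T,\del T,\ldots,\del^{k+1}T$)},
\end{align*}
where the only terms of derivative order exceeding $k$ in $\Riem$ (resp.\ $k+1$ in $T$) are $\del^{k+2}T*T$ and $\del^{k+1}\Riem*T$ (the $a=k+1$ summand), each linear in that top derivative with coefficient bounded by a function of $K$.

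\medskip

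\textbf{Combined quantity and induction.} Because the $\G2$-Bianchi identity \eqref{eq:G2Bianchi} ties $\del T$ to $\Riem$, the two must be controlled simultaneously. I would prove, by induction on $k$ (with the usual nesting of radii, each level proved on a slightly smaller ball so the next level's cutoff lands where the previous bound holds), that $t^k(|\del^k\Riem|^2+|\del^{k+1}T|^2)\le C_k(K,r)$ on $B_{g(0)}(p,r/2)\times[0,1/K]$, the case $k=0$ being the hypothesis. For the inductive step one applies the maximum principle, following Shi's localization, to $\eta^{2m}G_k$ with
\begin{align*}
G_k=t^k\big(|\del^k\Riem|^2+|\del^{k+1}T|^2\big)+\sum_{j=0}^{k-1}\beta_j\,t^{j}\big(|\del^{j}\Riem|^2+|\del^{j+1}T|^2\big)+\beta_{-1}\big(|T|^4+1\big),
\end{align*}
for $\beta_{k-1}\gg\cdots\gg\beta_0\gg\beta_{-1}\gg1$ and $m$ large, all depending on $K,r$. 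Using $\Delta|A|^2=2\langle A,\Delta A\rangle+2|\del A|^2$, the evolution \eqref{eq:RHFginvevol} of $g^{-1}$, the inductive bounds on the lower-order derivatives, and Young's inequality on the cross terms --- in particular $\langle\del^k\Riem,\del^{k+2}T*T\rangle\le\varepsilon|\del^{k+2}T|^2+C_\varepsilon K|\del^k\Riem|^2$, matched by the dissipative $-2t^k|\del^{k+2}T|^2$ at level $k$, and $\langle\del^{k+1}T,\del^{k+1}\Riem*T\rangle\le\varepsilon|\del^{k+1}\Riem|^2+C_\varepsilon K|\del^{k+1}T|^2$, matched by $-2\beta_{k-1}t^{k-1}|\del^{k+1}\Riem|^2$, while the cutoff errors of size $1/r^2$ at each level are absorbed by the dissipation $-|\del^{\bullet+1}\Riem|^2-|\del^{\bullet+2}T|^2$ of the level below (which is why one needs the pyramid of constants) --- one arrives at $(\ptt-\Delta)(\eta^{2m}G_k)\le C(K,r)\big(1+\eta^{2m}G_k\big)$. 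The maximum principle on $\overline{B_{g(0)}(p,r)}\times[0,1/K]$, together with $Kt\le1$, then gives $\eta^{2m}G_k\le C(K,r,k)$, hence the claimed bound on $B_{g(0)}(p,r/2)$; taking square roots yields $|\del^k\Riem|+|\del^{k+1}T|\le C(K,r,k)\,t^{-k/2}$.

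\medskip

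\textbf{Main obstacle.} The genuinely non-formal point is exactly this $\Riem$--$T$ coupling: \eqref{eq:Riemevolschem} carries the term $\del^2T*T$, one derivative too high relative to the torsion derivative controlled at each level, and the torsion equation symmetrically feeds back $\del^{k+1}\Riem$, so the two derivative hierarchies must be bundled into one quantity $G_k$ and one has to verify that at every level the dissipative terms exactly dominate the cross terms (and the localization errors). Once the interpolation constants $\beta_j$ and the cutoff exponent $m$ are arranged, the rest is the classical Shi iteration; as already noted, all of this is subsumed in \cite[Thm.~2.1]{gaochen-shi}.
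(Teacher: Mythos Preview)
Your proposal is correct and matches the paper's own treatment: the paper does not give an independent proof of this theorem but simply invokes \cite[Thm.~2.1]{gaochen-shi} after verifying that the Ricci-harmonic flow is a reasonable flow. Your additional sketch of the direct Bernstein--Bando--Shi localization is extra material the paper omits, but it is consistent with the global Shi estimates the paper does prove in Theorem~\ref{thm:shiest} and the modified local estimates in Theorem~\ref{thm:strlocder}.
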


\medskip

In fact, we prove the following ``doubling-time estimate'' which shows why the assumption in \eqref{eq:locderestassump} is a reasonable one. Suppose $\g2(t)$ is a Ricci-harmonic flow on a compact manifold $M$ and define

\begin{align}\label{eq:Lambdadefn}
	\Lambda(x,t)= \left(|\Riem(x,t)|^2+|\del T(x,t)|^2+|T(x,t)|^4\right)^{\frac 12}.
\end{align}
Let $\Lambda(t)= \underset{M}{\text{sup}}\  \Lambda(x,t)$. 

\begin{lemma}[Doubling-time estimate]\label{lem:dte}
Let $\g2(t)$ be a Ricci-harmonic flow on a compact manifold $M$ for $t\in[0, \tau]$. Then there exists a constant $C>0$ and a $\delta>0$ such that 
\begin{align*}
\Lambda(t)\leq 2\Lambda(0) \ \ \ for\ all\ 0\leq t\leq \delta,
\end{align*}
with $\delta \leq$ \emph{min}$\left\{\tau, \frac{1}{C\Lambda(0)} \right\}$.
\end{lemma}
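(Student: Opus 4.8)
The plan is to derive a differential inequality for $\Lambda(t)^2$ and then integrate it. First I would compute $\partial_t \Lambda^2 = \partial_t(|\Riem|^2 + |\nabla T|^2 + |T|^4)$ using the schematic evolution equations already established: $\partial_t |\Riem|^2 \le \Delta|\Riem|^2 - 2|\nabla\Riem|^2 + C(|\Riem|^3 + |\Riem|^2|T|^2 + |\Riem||\nabla T|^2 + |\Riem||T||\nabla^2 T|)$ from \eqref{eq:Rimesqschemevol}, together with analogous inequalities for $\partial_t|\nabla T|^2$ and $\partial_t|T|^4$ obtained by differentiating the schematic torsion equation \eqref{eq:torevolsch} (the $|T|^4$ term is needed precisely to absorb the zeroth-order terms and match scaling, and $|\nabla T|^2$ is needed so the bad term $|\Riem||T||\nabla^2 T|$ can be absorbed into $-|\nabla^2 T|^2$ via Cauchy--Schwarz). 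The key point is that every term on the right-hand side, after Young's inequality and absorbing the good negative terms $-2|\nabla\Riem|^2$, $-|\nabla^2 T|^2$, $-|\nabla^2 T|^2$, can be bounded by $C\,\Lambda^3 + (\text{nonpositive})$: schematically $|\Riem|^3, |\Riem|^2|T|^2, |\Riem||\nabla T|^2 \le C\Lambda^3$, while $|\Riem||T||\nabla^2 T| \le \tfrac12|\nabla^2 T|^2 + C|\Riem|^2|T|^2 \le \tfrac12|\nabla^2T|^2 + C\Lambda^3$, and similarly for the terms coming from $|\nabla T|^2$ and $|T|^4$.

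Thus I obtain, at an interior maximum of $\Lambda^2$ in space (using that $M$ is compact, so for each $t$ the sup is attained), the inequality
\begin{align*}
\frac{d}{dt}\Lambda(t)^2 \le C\,\Lambda(t)^3,
\end{align*}
in the sense of forward difference quotients (one makes this rigorous by the standard argument that $\Lambda(t)^2$ is locally Lipschitz and its upper Dini derivative is controlled by $\partial_t(\cdot)$ evaluated at a spatial maximum point, where $\Delta$ of the relevant quantity is $\le 0$ and the gradient terms vanish). Writing $f(t) = \Lambda(t)$, this gives $f' \le \tfrac{C}{2} f^2$ wherever $f>0$ (if $f\equiv 0$ on an interval the conclusion is trivial), so $\tfrac{d}{dt}(f^{-1}) \ge -\tfrac C2$, hence $f(t)^{-1} \ge f(0)^{-1} - \tfrac{C}{2}t$. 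Therefore $f(t) \le \big(f(0)^{-1} - \tfrac C2 t\big)^{-1} \le 2 f(0)$ as long as $\tfrac C2 t \le \tfrac{1}{2 f(0)}$, i.e. for $t \le \tfrac{1}{C\Lambda(0)}$; taking $\delta = \min\{\tau, \tfrac{1}{C\Lambda(0)}\}$ (and noting the case $\Lambda(0)=0$ is trivial) finishes the proof.

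The main obstacle I anticipate is the bookkeeping in Step 1: one must actually differentiate $|\nabla T|^2$ and $|T|^4$ along the flow, which brings in commutator terms (via the Ricci identity \eqref{eq:ricciiden}) between $\Delta$ and $\nabla$, and terms from the evolution of the metric $g^{-1}$ in \eqref{eq:RHFginvevol} hitting the contractions; one has to check that all resulting terms are indeed of the form $\le C\Lambda^3$ plus absorbable negatives, with no stray term of order $\Lambda^{5/2}$ or $\Lambda^{7/2}$ that cannot be controlled. The scaling heuristic (under parabolic rescaling $\Riem$, $\nabla T$, $T^2$ all scale like $\lambda^{-2}$, so $\Lambda \sim \lambda^{-2}$ and $\Lambda^3 \sim \lambda^{-6} \sim \partial_t \Lambda$) guarantees this works at the level of homogeneity, so the task is purely to verify no miraculous cancellation is needed — just Young's inequality with the right weights. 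Everything else is routine.
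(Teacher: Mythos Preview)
Your proposal is correct and follows essentially the same approach as the paper: compute the evolution of $|\Rm|^2+|\nabla T|^2+|T|^4$, use Young's inequality to absorb the cross terms (e.g.\ $|\Rm||T||\nabla^2 T|$, $|\nabla T||\nabla\Rm||T|$, $|\nabla^2 T||\nabla T||T|$) into the good negative terms $-|\nabla\Rm|^2-|\nabla^2 T|^2$, obtain $\partial_t\Lambda^2\le \Delta\Lambda^2+C\Lambda^3$, and then apply the maximum principle and integrate the resulting ODE exactly as you describe. Your anticipated bookkeeping in differentiating $|\nabla T|^2$ and $|T|^4$ is precisely what the paper carries out in \eqref{eq:normTsch}--\eqref{eq:normnabTevolsch}, and no unexpected terms appear.
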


\begin{proof}
The idea of the proof is similar to the proof of such estimate for the Ricci flow of metrics or the Laplacian flow of closed $\G2$-structures \cite[Prop. 4.1]{lotay-wei-gafa}. We will first derive a differential inequality for $\Lambda(t)$ along the Ricci-harmonic flow	and then use the scalar maximum principle to deduce the estimate.

\medskip

We first compute the evolution of $|T|^4$. Using \eqref{eq:torevolsch}, we have
\begin{align}
\frac{\pt}{\pt t}|T|^4 &=\frac{\pt}{\pt t}(g^{ia}g^{jb}g^{kc}g^{ld}T_{ij}T_{ab}T_{kl}T_{cd}) \nonumber \\
& = 2|T|^2 \left\langle2 \frac{\pt}{\pt t}T, T \right \rangle + T*T*T*T*\left(\Ric + T*T\right)  \nonumber \\
&= 2|T|^2(2T*(\Delta T + \Riem*T + T*T*T+ \del T*T*\g2 +\Riem*T*\psi ))+T*T*T*T*(\Ric+T*T) \nonumber \\
& \leq \Delta |T|^4-4|T|^2|\del T|^2+C|\Riem||T|^4+C|T|^6+C|\del T||T|^4, \label{eq:normTsch}
\end{align} 
where $C$ is some universal constant and we are using the fact that $|\g2|^2=|\psi|^2=7$.

We now compute the evolution of $|\del T|^2$. Recall that if a family of metrics is evolving by $\pt_t g =h$ then the Christoffel symbols evolve by
\begin{align*}
	\frac{\pt}{\pt t}\tensor{\Gamma}{^k_i_j}&= \frac 12 g^{kl}\left(\del_ih_{jl}+\del_jh_{il}-\del_lh_{ij}\right)
\end{align*}
and as a result, any  time-dependent tensor $Q(t)$ evolve by
\begin{align}\label{eq:Qtenevol}
\frac{\pt}{\pt t}\del Q = \del \frac{\pt}{\pt t}Q + Q* \frac{\pt}{\pt t}\Gamma,
\end{align}
and hence, using \eqref{eq:torevolsch}, we see that

\begin{align}
\frac{\pt}{\pt t} \del T&= \del \ptt T + T*(\del(\Ric + T*T)) \nonumber \\
&= \del (\Delta T + \Riem*T + T*T*T+ \del T*T*\g2 +\Riem*T*\psi) + T*(\del \Ric + 2\del T*T) \nonumber \\
&= \Delta \del T +  \del \Riem * T+\Riem * \del T+\del T*T*T+ \del^2T*T*\g2 + \del T*\del T*\g2 \nonumber \\
& \quad+ \del T*T*T*\psi + \del \Riem *T*\psi + \Riem * \del T*\psi + \Riem*T*T*\g2, \label{eq:nabTevolsch}
\end{align}
where we have used the Ricci identity \eqref{eq:ricciiden} to have
\begin{align*}
	\del \Delta T &=\Delta \del T+\Riem * \del T+ \del \Riem*T,
\end{align*}
and the facts  that $\del \g2 = T*\psi$ and $\del \psi = T*\g2$ from \eqref{eq:delphi} and \eqref{eq:delpsi} respectively.

We now use \eqref{eq:nabTevolsch} to compute
\begin{align}
\ptt |\del T|^2&= 2\left \langle \del T ,\ptt \del T\right \rangle + \del T*\del T*\ptt g^{-1} \nonumber \\
& = 2 \del T* \left( \Delta \del T +  \del \Riem * T+\Riem * \del T+\del T*T*T+ \del^2T*T*\g2 + \del T*\del T*\g2 \right. \nonumber \\
& \qquad \qquad \quad \left.+ \del T*T*T*\psi + \del \Riem *T*\psi + \Riem * \del T*\psi + \Riem*T*T*\g2\right) \nonumber \\
& \quad+ \del T*\del T* (\Ric + T*T) \nonumber \\
& \leq \Delta |\del T|^2 - 2|\del^2T|^2 + C|\del T||\del \Riem||T|+C|\del T|^2|\Riem| + C|\del T|^2|T|^2+|\del^2T|\del T||T|+ C|\del T|^3 \nonumber \\
& \quad +   C|\del T||\Riem||T|^2. \label{eq:normnabTevolsch}
\end{align}

We use \eqref{eq:Rimesqschemevol}, \eqref{eq:normTsch} and \eqref{eq:normnabTevolsch} to compute
\begin{align}
\ptt \Lambda(x,t)^2&= \Delta(|\Riem|^2+|\del T|^2+|T|^4) - 2|\del \Riem|^2 - 2|\del^2T|^2 + C|\Riem|^3+C|\Riem|^2|T|^2 + C|\Riem||\del  T|^2  \nonumber \\
& \quad+ C|\Riem||T||\del^2T|  -4|T|^2|\del T|^2+C|\Riem||T|^4+C|T|^6+C|\del T||T|^4 + C|\del T||\del \Riem||T| \nonumber \\
& \quad+ C|\del T|^2|T|^2+C|\del^2T|\del T||T|+ C|\del T|^3  +   C|\del T||\Riem||T|^2. \label{eq:dteaux1}
\end{align}

Notice that the terms $|\Riem|^3, |\del T|^3, |T|^6, |\Riem|^2|\del T|, |\Riem||\del T|^2, |\Riem|^2|T|^2, |\Riem||T|^4, |\del T|^2|T|^2, |\del T||T|^4\  \text{and}$ $|\Riem||\del T||T|^2$ are all bounded by some constant times $\Lambda^3 = (|\Riem|^2+|\del T|^2+|T|^4)^{\frac{3}{2}}$, and so we only need to take care  of the terms $C|\Riem||T||\del ^2T|, C|\del T||\del \Riem||T|$ and $C|\del^2T||\del T||T|$. We use the Young's inequality $ab\leq \frac{1}{2\varepsilon}a^2 + \frac{\varepsilon}{2}b^2$ for all $\varepsilon>0$ and $a, b\ge 0$ to estimate
\begin{align}
|\Riem||T||\del^2T|&\leq  	\frac{1}{2\varepsilon} |\Riem|^2|T|^2 + \frac{\varepsilon}{2}|\del^2T|^2, \label{eq:dteaux2} \\
|\del T||\del \Riem||T| & \leq \frac{1}{2\varepsilon} |\del T|^2|T|^2 + \frac{\varepsilon}{2}|\del \Riem|^2, \label{eq:dteaux3} \\
|\del^2T||\del T||T| & \leq \frac{1}{2\varepsilon}|\del T|^2|T|^2 + \frac{\varepsilon}{2}|\del^2T|^2. \label{eq:dteaux4}
\end{align} 

Using equations~\eqref{eq:dteaux2}-\eqref{eq:dteaux4} in \eqref{eq:dteaux1}, we get
\begin{align*}
\ptt \Lambda(x,t)^2 &\leq \Delta \Lambda(x,t)^2 + (C\varepsilon-2)(|\del \Riem|^2+|\del^2T|^2) + C\Lambda(x,t)^3,
\end{align*}
which on choosing $\varepsilon$ so that $C\varepsilon\leq 1$, yields
\begin{align}
\ptt \Lambda(x,t)^2 \leq \Delta \Lambda(x,t)^2 -(|\del \Riem|^2+|\del^2T|^2)+ C\Lambda (x,t)^3. \label{eq:dteaux5}
\end{align}

Since $\Lambda(t)= \underset{M}{\text{sup}} \Lambda(x,t)$ is a Lipschitz function, applying the maximum principle to \eqref{eq:dteaux5}, we get
\begin{align}
\ddt \Lambda (t) \leq \frac C2 \Lambda(t)^2,
\end{align} 
in the sense of the lim sup of forward difference quotients. We conclude that, for $t\leq \text{min} \left\{
\tau, \frac{2}{C\Lambda(0)}\right\}$, we have 
\begin{align}
\Lambda(t)\leq \dfrac{\Lambda(0)}{1-\frac 12 C\Lambda(0)t},
\end{align}
and hence $\Lambda(t)\leq 2\Lambda(0)$ for all $0\leq t\leq \delta$ if we take $\delta= \text{min} \left\{
\tau, \frac{1}{C\Lambda(0)}\right\}$.
\end{proof}

Thus, we see that the quantity $\Lambda(x,t)$ is well-behaved and cannot blow-up quickly along the Ricci-harmonic flow and hence the assumption of bounded $\Lambda(t)$ along the Ricci-harmonic flow is well-justified. In fact, the proof of the doubling-time estimate shows why we need to define $\Lambda$ by combining $|\Riem|^2, |\del T|^2$ and $|T|^4$ terms. Firstly, due to scaling reasons, the powers of the norm of the tensors involved are the correct one. Secondly, the terms like $|\del T||\Riem||T|^2, |\del T||\del \Riem||T|, |\Riem||T||\del^2T|$ and others in the evolution equations of $|\Riem|^2$ and $|\del T|^2$ are bad terms so combining them together allow us to use the good gradient terms $|\del \Riem|^2$ and $|\del^2T|^2$ to kill parts of the contribution. The remaining parts of contributions of such terms are subsumed in $\Lambda(x,t)^3$ term once we also include $|T|^4$ in the definition of $\Lambda$.

\medskip

Based on the computations done above for the evolution equations of the torsion and its covariant derivative and the Riemann curvature tensor, we can also prove global derivative estimates for solutions of the Ricci-harmonic flow, i.e., Shi-type estimates for the flow. The proof of the following theorem is inspired from the derivative estimates  for the Ricci flow by Shi \cite{Shi} and Hamilton \cite[Thm. 7.1]{hamilton-singularities} and is similar, in the $\G2$-specific case, to similar estimates for the Laplacian flow of closed $\G2$-structures by \cite[Thm. 4.2]{lotay-wei-gafa} or for the isometric flow \cite[Thm. 3.3]{dgk-isometric}.

\begin{theorem}\label{thm:shiest}
Suppose that  $K>0$ is a constant and $\g2(t)$ is  a solution to the Ricci-harmonic flow on a closed manifold $M^7$ with $t\in \left[0, \frac 1K\right]$. For all $m\in \mathbb{N}$, there exists a constant $C_m$ such that  if 
\begin{align}\label{eq:shiestassum}
\Lambda(x,t)\leq K \ \ \ \ \text{on}\ M\times \left[0,\frac 1K\right],
\end{align}
then for all $t\in \left[0, \frac 1K\right]$ we have
\begin{align}\label{eq:shiestconc}
|\del^{m}\Riem|+|\del^{m+1}T|\leq C_m t^{-\frac m2}K.
\end{align}
\end{theorem}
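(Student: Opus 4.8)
The plan is to prove \eqref{eq:shiestconc} by induction on $m$ with the Bernstein--Shi method: for each $m$ one builds an auxiliary function that weights the top-order quantity $|\del^m\Riem|^2 + |\del^{m+1}T|^2$ by $t^m$, adds suitable multiples of the already-controlled lower-order quantities, shows it satisfies a differential inequality $\ptt F \le \Delta F + C$, and then applies the scalar maximum principle on the closed manifold $M$.

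The preliminary step is to record the schematic evolution of arbitrary covariant derivatives. Differentiating \eqref{eq:Riemevolschem} and \eqref{eq:torevolsch} $m$ times, commuting $\del^m$ past $\Delta$ with the Ricci identity \eqref{eq:ricciiden} (exactly as in \eqref{eq:nabTevolsch} for $m=1$) and using $\del\g2 = T*\psi$, $\del\psi = T*\g2$ from \eqref{eq:delphi}--\eqref{eq:delpsi} and $|\g2|^2 = |\psi|^2 = 7$, one obtains
\[
\ptt\del^m\Riem = \Delta\del^m\Riem + \sum\del^a\Riem*\del^b\Riem + \sum\del^a\Riem*\del^bT*\del^cT + \sum\del^aT*\del^bT*\del^c(\g2\ \text{or}\ \psi),
\]
the sums ranging over nonnegative indices of total order $m$ (one $T$-factor in the last sum may carry up to $m+2$ derivatives), and similarly
\[
\ptt\del^{m+1}T = \Delta\del^{m+1}T + \sum\del^a\Riem*\del^bT + \sum\del^aT*\del^bT*\del^cT + \sum\del^a\Riem*\del^bT*(\g2\ \text{or}\ \psi),
\]
with indices of total order $m+1$. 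Using $\Delta|A|^2 = 2\langle A,\Delta A\rangle + 2|\del A|^2$ and the evolution of $g^{-1}$ from \eqref{eq:RHFginvevol}, these give, for every $m$, inequalities of the form $\ptt|\del^m\Riem|^2 \le \Delta|\del^m\Riem|^2 - 2|\del^{m+1}\Riem|^2 + (\cdots)$ and $\ptt|\del^{m+1}T|^2 \le \Delta|\del^{m+1}T|^2 - 2|\del^{m+2}T|^2 + (\cdots)$, where every term in $(\cdots)$ is a product of factors $|\del^i\Riem|$, $|\del^jT|$ of total parabolic weight equal to that of the left-hand side (assigning weight $2$ to $\Riem$ and to $\del T$, weight $1$ to $T$, and weight $1$ to each $\del$), with at most one factor of top order.

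For the base case $m=1$ I would take $F_1 = t(|\del\Riem|^2 + |\del^2T|^2) + A\,\Lambda^2$ with $\Lambda^2$ as in \eqref{eq:Lambdadefn} and $A$ large. Applying the above inequalities ($m=1$) to the $t(\cdots)$ part, \eqref{eq:dteaux5} to the $\Lambda^2$ part, the hypothesis $\Lambda\le K$ and $t\le 1/K$, and Young's inequality to trade every term containing $|\del^2\Riem|$ or $|\del^3T|$ against $\varepsilon(|\del^2\Riem|^2 + |\del^3T|^2)$, one reaches $\ptt F_1 \le \Delta F_1 + (1 + CtK^2 - A)(|\del\Riem|^2 + |\del^2T|^2) + (C\varepsilon - 2t)(|\del^2\Riem|^2 + |\del^3T|^2) + C(K)$; choosing $\varepsilon$ small and then $A$ large (depending only on $K$) kills the bracketed coefficients, so $\ptt F_1 \le \Delta F_1 + C(K)$, and the maximum principle with $F_1(\cdot,0) = A\,\Lambda(\cdot,0)^2 \le AK^2$ gives $F_1 \le C(K)$ on $[0,1/K]$, i.e.\ $|\del\Riem| + |\del^2T| \le C(K)\,t^{-1/2}$. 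The inductive step repeats this with the nested function $F_m = t^m(|\del^m\Riem|^2 + |\del^{m+1}T|^2) + \sum_{j=1}^{m-1}\beta_j\,t^j(|\del^j\Riem|^2 + |\del^{j+1}T|^2) + \beta_0\,\Lambda^2$: in $\ptt F_m - \Delta F_m$ every non-top factor is bounded either by $K$ (from $\Lambda\le K$) or by the induction hypothesis (which gives $t^j(|\del^j\Riem|^2 + |\del^{j+1}T|^2)\le C_j^2K^2$ for $j\le m-1$), every top-order factor is absorbed into $-2t^m(|\del^{m+1}\Riem|^2 + |\del^{m+2}T|^2)$ by Young, and the residual level-$j$ terms — including the $j\,t^{j-1}(\cdots)$ from differentiating $t^j$ — are absorbed into $-\beta_{j-1}t^{j-1}(\cdots)$ upon choosing $\beta_{m-1}\gg\cdots\gg\beta_1\gg\beta_0\gg1$ in turn; the maximum principle then yields $F_m \le C(K,m)$ and hence \eqref{eq:shiestconc}, the linear $K$-dependence of $C_m$ being recovered by tracking the parabolic weights through the Young steps (equivalently, by first rescaling the flow so that $\Lambda\le1$ on $[0,1]$).

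The main obstacle, not present for the pure Ricci flow, is the two-way coupling between curvature and torsion: \eqref{eq:Riemevolschem} contains $\del^2T*T$ and \eqref{eq:torevolsch} contains $\Riem*T$, so the torsion derivative runs one order ahead of the curvature derivative and the auxiliary functions must pair $|\del^m\Riem|^2$ with $|\del^{m+1}T|^2$. The delicate check is that, after commuting $\del^m$ through $\Delta$ and through all the $*$-products, no term of order higher than $\del^{m+1}\Riem$ or $\del^{m+2}T$ is produced and each surviving top-order term is linear in its top-order factor, so that the two good gradient terms genuinely dominate; the remaining bookkeeping — verifying that the cascade of constants $\beta_j$ closes consistently and that the weights bookkeep to give the stated dependence on $m$ and $K$ — is routine but must be done carefully.
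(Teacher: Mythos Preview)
Your proposal is correct and takes essentially the same approach as the paper: the auxiliary function $F_1 = t(|\del\Riem|^2 + |\del^2T|^2) + A\,\Lambda^2$ coincides with the paper's $f$ in \eqref{eq:shiestf1}, the Young-inequality absorption and maximum-principle argument match \eqref{eq:nabRmTsqsch}--\eqref{eq:fevol1}, and your nested inductive function $F_m$ is the standard continuation the paper alludes to but does not write out. The only cosmetic slip is the coefficient $(C\varepsilon - 2t)$ on the top-order gradient terms, which should read $(C\varepsilon - 2)t$ since every such cross-term already carries the factor $t$ from the weighted part of $F_1$; with that correction the absorption works for all $t\in[0,1/K]$ as you intend.
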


\begin{proof}
As in the Ricci flow case, the proof is by induction on $m$. The idea of the proof is to define a suitable function which satisfies a parabolic differential inequality and then apply the maximum principle.  Since the proof is similar to derivative estimates for other geometric flows, with only the actual computations being different which  depend on the evolution of the torsion and its derivatives and the Riemann curvature tensor and its derivatives which we have explicitly derived, we only show the details for the base case in the induction step. The general case of induction is done for the modified local Shi-type estimates in Theorem~\ref{thm:strlocder} and is similar to the case here.  

\medskip

For $m=1$ case, we define the function
\begin{align}
f = t(|\del \Riem|^2+|\del^2T|^2)+ \beta(|\Riem|^2+|\del T|^2+|T|^4), \label{eq:shiestf1}
\end{align}
where $\beta$ is a constant to be determined later. Note that from he assumption \eqref{eq:shiestassum}, we have $f(x,0)\leq \beta K^2$. We want to compute the evolution of $f$ and so we compute the evolution of $|\del \Riem|^2$ and $\del^2T|^2$. 

We use \eqref{eq:RHFgevol} and \eqref{eq:Riemevolschem} to compute the evolution of $|\del \Riem|^2$. Using \eqref{eq:Qtenevol}, we have
\begin{align}
\ptt \del \Riem&= \del \ptt \Riem + \Riem * \ptt \Gamma \nonumber \\
& = \del (\Delta \Riem + \Riem * \Riem + \Riem * T*T+ \del T* \del T+ \del^2T * T) + \Riem *(\del (\Riem + T*T)) \nonumber \\
&= \Delta \del \Riem + \del \Riem*\Riem + \del \Riem*T*T+ \Rm*\del T*T+ \del^2T*\del T+ \del^3T*T, \label{eq:nabRmsch}
\end{align}
where we used the Ricci identity \eqref{eq:ricciiden} in the third equality to write
\begin{align*}
\del \Delta \Rm = \Delta \del \Rm + \del \Rm*\Rm.
\end{align*}

We use \eqref{eq:nabRmsch} to compute
\begin{align}
\ptt |\del \Rm|^2 &= 2\left\langle \del \Rm, \ptt \del \Rm\right\rangle + \del \Rm*\del \Rm * \ptt g^{-1} \nonumber \\
&= \del \Rm*\left(\Delta \del \Riem + \del \Riem*\Riem + \del \Riem*T*T+ \Rm*\del T*T+ \del^2T*\del T+ \del^3T*T\right) \nonumber \\
& \quad + \del \Rm*\del \Rm * \Rm + \del \Rm*\del \Rm*T*T \nonumber \\
& \leq \Delta |\del \Rm|^2-2|\del^2 \Rm|^2+ C|\del \Rm|^2|\Rm| + C|\del \Rm|^2|T|^2+ C|\del \Rm||\Rm||\del T||T| \nonumber \\
& \quad + C|\del \Rm||\del^2T||\del T|+ C|\del \Rm||\del^3T||T|. \label{eq:nabRmsqsch}
\end{align}

Similarly, we use \eqref{eq:Qtenevol} and \eqref{eq:nabTevolsch} to compute
\begin{align}
\ptt \del^2T & = \del \ptt \del T+ \del T*\ptt \Gamma \nonumber \\
& = \del \left(\Delta \del T +  \del \Riem * T+\Riem * \del T+\del T*T*T+ \del^2T*T*\g2 + \del T*\del T*\g2 \right.\nonumber \\
& \qquad\quad \left.+ \del T*T*T*\psi \right. \left.+ \del \Riem *T*\psi + \Riem * \del T*\psi + \Riem*T*T*\g2\right) + \del T*\del \Rm \nonumber \\
& \quad +\del T*\del T*T \nonumber \\
\begin{split}
&= \Delta \del^2T + \del \Rm*\del T+ \Rm*\del^2T + \del^2\Rm *T+ \del^2T*T*T + \del T*\del T*T + \del^3T*T*\g2 \nonumber \\
& \quad + \del^2T*\del T* \g2 + \del^2T*T*T*\psi + \del T*\del T*T*\psi + \del T*T*T*T*\g2 + \del^2\Rm *T*\psi \nonumber \\
& \quad + \del \Rm *\del T*\psi + \del \Rm*T*T*\g2 + \Rm*\del^2T*\psi + \Rm*\del T*T*\g2+\Rm*T*T*T*\psi,
\end{split}
\end{align}
where we again used the Ricci identity for the $\del \Delta \del T$ term. Using this, we find
\begin{align}
\ptt |\del^2T|^2 & \leq \Delta |\del^2T|2-2|\del^3T|^2 + C|\del^2T||\del \Rm||\del T|+C|\del^2T|^2|\Rm|+C|\del^2T||\del^2\Rm||T| +|\del^2T|^2|T|^2 \nonumber \\
& \quad +C|\del^2T||\del T|^2|T| ++C|\del^2T||\del^3 T||T|++C|\del^2T|^2|\del T|++C|\del^2T||\del T||T|^3  \nonumber \\
& \quad +C|\del^2T||\del \Rm||T|^2 + +C|\del^2T||\del T||\Rm||T|+C|\del^2T||\Rm||T|^3. \label{eq:nab2Tsqsch}  
\end{align}

We use Young's inequality to estimate some of the terms on the right hand side of the differential inequality \eqref{eq:nabRmsqsch} and \eqref{eq:nab2Tsqsch}. For all $\varepsilon>0$, the fifth, sixth and the seventh term on the RHS of \eqref{eq:nabRmsqsch} can be estimated as
\begin{align*}
2|\del \Rm||\Rm||\del T||T|&\leq |\del \Rm||T|(|\Rm|^2+|\del T|^2), \\
2|\del \Rm||\del^2T||\del T| & \leq  |\del T|(|\del \Rm|^2+|\del^2T|^2),\\
2|\del \Rm||\del^3T||T|&\leq \frac{1}{\varepsilon}|\del \Rm|^2|T|^2+ \varepsilon |\del^3T|^2.	
\end{align*}

Similarly, for all $\varepsilon>0$, the third and eleventh terms, fifth term, eighth term, tenth, twelfth and the thirteenth terms on the RHS of \eqref{eq:nab2Tsqsch} can be estimated (in that order) as
\begin{align*}
2|\del \Rm||\del^2T|(|\del T|+|T|^2)& \leq(|\del \Rm|^2+|\del^2T|^2)(|\del T|+|T|^2), \\
2|\del^2T||\del^2\Rm||T|& \leq \frac{1}{\varepsilon}|\del^2T|^2|T|^2+\varepsilon |\del^2\Rm|^2,\\
2|\del^2T||\del^3T||T|& \leq \frac{1}{\varepsilon}|\del^2T|^2|T|^2+\varepsilon |\del^3T|^2,\\
2|\del^2T||\del T||T|^3&\leq |\del^2T||T|(|\del T|^2+|T|^4),\\
2|\del ^2T||\del T||\Rm||T| & \leq |\del^2T||T|(|\Rm|^2+|\del T|^2),\\
2|\del^2T||\Rm||T|^3& \leq |\del^2T||T|(|\Rm|^2+|T|^4).
\end{align*} 

We can combine \eqref{eq:nabRmsqsch} and  \eqref{eq:nab2Tsqsch}, along with all the estimates using the Young's inequality above and suitably chosen $\varepsilon>0$, to obtain
\begin{align}
\ptt (|\del \Rm|^2|+|\del^2T|^2)&\leq \Delta(|\del \Rm|^2+|\del^2T|^2)-(|\del^2\Rm|+|\del^3T|^2) \nonumber \\
& \quad +C(|\del \Rm|^2+|\del^2T|^2)(|\Rm|+|\del T|+|T|^2) \nonumber \\
& \quad + C|T|(|\del \Rm|+|\del^2T|)(|\Rm|^2+|\del T|^2+|T|^4). \label{eq:nabRmTsqsch}
\end{align}

Having obtained these evolution equations and recalling the definition of the function $f$ from \eqref{eq:shiestf1}, in combination with \eqref{eq:dteaux5}, allows us to compute
\begin{align}
\ptt f &\leq \Delta f+ (1-\beta) (|\del \Rm|^2+|\del^2T|^2) +Ct(|\del \Rm|^2+|\del^2T|^2)(|\Rm|+|\del T|+|T|^2) \nonumber \\
& \quad + Ct|T|(|\del \Rm|+|\del^2T|)(|\Rm|^2+|\del T|^2+|T|^4) \nonumber \\
& \quad + C\beta (|\Rm|^2+|\del T|^2+|T|^4)^{\frac 32}. \label{eq:fevol1}
\end{align}
Note that we have the  assumption of $\Lambda(t)=\underset{M}{\text{sup}}\ \Lambda(x,t) \leq K$ and $t\in \left[0, \frac 1K\right]$ and hence $tK\leq 1$. We use the previous fact, the definition of $\Lambda$ and again use the Young's inequality for the third and the fourth term on the RHS of \eqref{eq:fevol1} to get
\begin{align*}
\ptt f\leq \Delta f + (C-\beta)(|\del \Rm|^2+|\del^2T|^2)+C\beta K^3,
\end{align*}
which on choosing $\beta$ sufficiently large gives
\begin{align}
\ptt f\leq \Delta f+C\beta K^3.
\end{align}

Applying the maximum principle to the above inequality implies that 
\begin{align*}
\underset{M}{\text{sup}}\ f(x,t) \leq f(x,0)+C\beta K^3t \leq \beta K^2+C\beta K^2 \leq CK^2.
\end{align*}

Thus, we obtain
\begin{align*}
|\del \Rm|+|\del^2T|\leq C_1t^{-\frac 12}K
\end{align*}
which proves \eqref{eq:shiestconc} for $m=1$ and the base case of the induction is done.

We don't write the estimates for completing the induction step as it is similar to Shi-type estimates for other flows of $\G2$-structures, for instance, see \cite{lotay-wei-gafa} for the case of the Laplacian flow for closed $\G2$-structures. Below, we derive the expressions for the evolution of the $m$-th order derivative of the Riemann curvature tensor and the $m+1$-th order derivative of the torsion which are to be used for completing the induction step and will also be used in the proof of Theorem~\ref{thm:strlocder}. Since for any time-dependent tensor $Q(t)$, we have
\begin{align}
\ptt \del^mQ-\del^m \ptt Q&= \sum_{i=1}^{m} \del^{m-i}Q*\del^i \ptt g, \label{eq:Qdelmevol}
\end{align}
we use \eqref{eq:RHFgevol}, \eqref{eq:Riemevolschem} and the Ricci identity, to get
\begin{align}
\ptt \del^m\Rm&=\del^m \ptt \Rm + \sum_{i=1}^{m} \del^{m-i}\Rm * \del^i (\Ric + T*T)\nonumber \\
&= \del^m (\Delta \Riem + \Riem * \Riem + \Riem * T*T+ \del T* \del T+ \del^2T * T)  + \sum_{i=1}^{m} \del^{m-i}\Rm * \del^i (\Ric + T*T)\nonumber \\
&=\Delta \del^m\Rm + \sum_{i=0}^{m} \del^{m-i}\Rm * \del^i(\Rm +T*T)+\sum_{i=0}^{m+1}\del^iT*\del^{m+2-i}T. \label{eq:delmRmevol}
\end{align}
Using the previous equation, we get
\begin{align}
\ptt |\del^m\Rm|^2&= \Delta |\del^m\Rm|^2-2|\del^{m+1}\Rm|^2+\sum_{i=0}^{m} \del^m\Rm*\del^{m-i}\Rm * \del^i(\Rm +T*T) \nonumber \\
& \quad +\sum_{i=0}^{m+1}\del^m\Rm*\del^iT*\del^{m+2-i}T.\label{eq:delmRmsqevol}
\end{align}

\noindent
Similarly, for the torsion tensor we use \eqref{eq:torevolsch}, to get
\begin{align*}
\ptt \del^{m+1}T&= \del^{m+1}\ptt T + \sum_{i=1}^{m+1}\del^{m+1-i}T*\del^i\ptt g \\
&= \Delta \del^{m+1}T+ \sum_{i=0}^{m+1} \del^{m+1-i}T*\del^i\Rm + \sum_{i=0}^{m+1}\del^{m+1-i}T*\del^i(T*T) \\
& \quad + \sum_{i=0}^{m+1}\del^{m+1-i}(\Rm*T)*\del^i\psi + \sum_{i=0}^{m+1}\del^{m+1-i}(\del T*T)*\del^i\g2
\end{align*}
and hence

\begin{align}
\ptt |\del^{m+1}T|^2&=\Delta|\del^{k+1}T|^2-2|\del^{k+2}T|^2 + \sum_{i=0}^{m+1}\del^{m+1}T*\del^{m+1-i}T*\del^i(\Rm +T*T) \nonumber \\
& \quad + \sum_{i=0}^{m+1}\del^{m+1}T*\del^{m+1-i}(\Rm*T)*\del^i\psi + + \sum_{i=0}^{m+1}\del^{m+1}T*\del^{m+1-i}(\del T*T)*\del^i\g2. \label{eq:delmTsqevol}
\end{align}
Finally, for completing the proof, we would also need the expression of $\del^m\psi$ and $\del^m\g2$ which can be obtained from successively differentiating \eqref{eq:delpsi} and \eqref{eq:delphi}, respectively and using the induction hypothesis.
\end{proof}

\subsection{Modified local Shi-type estimates}\label{subsec:lmodshi}
In this section, we prove our modified local Shi-type estimates for the Ricci-harmonic flow. Since the Ricci-harmonic flow is a reasonable flow of $\G2$-structures we obtain local derivative estimates for the solutions of the flow from \cite[Thm. 2.1]{gaochen-shi} (and we have also computed all the evolution equations required to derive them). We state the result below and state and prove a stronger form of the result which is Theorem~\ref{thm:strlocder}.

\begin{theorem}\label{thm:locderest}
Let $\g2(t)$ be a solution to the Ricci-harmonic flow on $[0,\tau]$. Let $B_{g(0)}(p, r)$ be the ball of radius $r$ centred at the point $p\in M$. If 
\begin{align}\label{eq:locderesthyp}
\left(|\Rm|^2+|\del T|^2+|T|^4\right)^{\frac 12} \leq K \ \ \ \ \text{on}\ \ \  B_{g(0)}(p, r)\times [0,\tau],
\end{align}
then
\begin{align}
|\del^m\Rm| + |\del^{m+1}T| \leq C(m, r, K)t^{-\frac m2} \ \ \ \ \text{on} \ \ \ B_{g(0)}(p, \frac r2) \times (0,\tau]. \label{eq:locderestconc}
\end{align}
\end{theorem}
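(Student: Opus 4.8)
The fastest route is to note that we have already verified the Ricci--harmonic flow to be a \emph{reasonable flow} of $\G2$-structures, so \cite[Thm.~2.1]{gaochen-shi} applies verbatim and yields \eqref{eq:locderestconc}. For a self-contained argument I would run the classical cutoff-function plus maximum-principle scheme, which is also the backbone of the stronger Theorem~\ref{thm:strlocder}. First set up the cutoff: since $\Lambda\le K$ on $B_{g(0)}(p,r)\times[0,\tau]$, equation \eqref{eq:RHFgevol} gives $|\partial_t g|\le CK$, so the metrics $g(t)$ are all uniformly equivalent to $g(0)$ there, and the two-sided bound $|\Rm|\le K$ gives Hessian comparison for $d_{g(0)}(p,\cdot)$; hence a standard mollification produces a \emph{time-independent} $\eta\ge0$, supported in $B_{g(0)}(p,r)$, with $\eta\equiv1$ on $B_{g(0)}(p,\tfrac r2)$ and $|\del\eta|^2\le C(r)\,\eta$, $|\del^2\eta|\le C(r)$ for every $t\in[0,\tau]$. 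I then argue by induction on $m\ge1$, the case ``$m=0$'' being the hypothesis itself.

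\emph{Base case $m=1$.} Put $G=|\del\Rm|^2+|\del^2T|^2$ and recall $\Lambda^2=|\Rm|^2+|\del T|^2+|T|^4\le K^2$. Consider the test function
\[
 f \;=\; \eta^{4}\,t\,G \;+\; \beta\,\eta^{2}\,\Lambda^{2},
\]
with $\beta=\beta(K,r,\tau)$ to be chosen large. Using \eqref{eq:dteaux5} for $\partial_t\Lambda^2$, the inequality \eqref{eq:nabRmTsqsch} for $\partial_tG$, the fact that $\partial_t(\eta^{2j}u)=\eta^{2j}\partial_t u$ (as $\eta$ is $t$-independent), and $\Delta(\eta^{2j}u)=\eta^{2j}\Delta u+2\del(\eta^{2j})\cdot\del u+u\,\Delta(\eta^{2j})$, I would compute $\partial_tf-\Delta f$. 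The only dangerous contributions are the gradient cross-terms $\del(\eta^{4})\cdot\del G$ and $\del(\eta^{2})\cdot\del\Lambda^2$; since $|\del G|\le C\sqrt G\,\sqrt{|\del^2\Rm|^2+|\del^3T|^2}$ and $|\del\Lambda^2|\le C\Lambda\sqrt G\le CK\sqrt G$, Young's inequality together with the extra power of $\eta$ from $|\del\eta|^2\le C(r)\eta$ absorbs the first into the good term $-t\eta^4(|\del^2\Rm|^2+|\del^3T|^2)$ coming from $\Delta G$ (leaving only a term $\lesssim C(K,r,\tau)\,\eta^2 G$) and the second into the good term $-\tfrac12\beta\eta^2 G$ coming from \eqref{eq:dteaux5}. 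Choosing $\beta$ large enough to dominate all leftover $\eta^2G$ terms, and using $tK\le\tau K$ and $\Lambda\le K$ for the rest, one gets $\partial_t f\le\Delta f+C(K,r,\tau)$ on $M\times[0,\tau]$. The maximum principle gives $\sup_M f(\cdot,t)\le\sup_M f(\cdot,0)+C(K,r,\tau)\,\tau\le\beta K^2+C(K,r,\tau)$, and since $\eta\equiv1$ on $B_{g(0)}(p,\tfrac r2)$ this is exactly $G\le C(m{=}1,r,K)\,t^{-1}$ there, i.e.\ \eqref{eq:locderestconc} for $m=1$.

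\emph{Inductive step.} Assuming $|\del^j\Rm|+|\del^{j+1}T|\le C\,t^{-j/2}$ for all $j\le m$ on a slightly larger concentric ball (choose a sequence of radii decreasing to $\tfrac r2$ but always $>\tfrac r2$, so the $\del^j$-bounds hold on the support of each successive cutoff), I would repeat the argument with
$f=\eta^{2(m+2)}t^{m+1}\bigl(|\del^{m+1}\Rm|^2+|\del^{m+2}T|^2\bigr)+\beta\,\eta^{2(m+1)}t^{m}\bigl(|\del^{m}\Rm|^2+|\del^{m+1}T|^2\bigr)$, using the schematic evolution equations \eqref{eq:delmRmsqevol} and \eqref{eq:delmTsqevol}. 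All terms involving derivatives of order $\le m$ are bounded by the induction hypothesis; the top-order gradient cross-terms are absorbed by $-t^{m+1}\eta^{2(m+2)}(|\del^{m+2}\Rm|^2+|\del^{m+3}T|^2)$ and the intermediate ones by $-\tfrac12\beta\eta^{2(m+1)}(|\del^{m}\Rm|^2+|\del^{m+1}T|^2)$. The maximum principle again yields $|\del^{m+1}\Rm|+|\del^{m+2}T|\le C(m{+}1,r,K)\,t^{-(m+1)/2}$ on $B_{g(0)}(p,\tfrac r2)$.

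\emph{Main obstacle.} There is no single hard step; the delicate point is the bookkeeping that keeps \emph{every} gradient cross-term multiplied by a sufficiently high power of $\eta$ so that it is swallowed either by the leading good term or by the $\Lambda^2$-part of $f$ — this is exactly what forces the increasing powers $\eta^{2(m+2)}$, $\eta^{2(m+1)}$ on the two pieces at the $m$-th stage. A secondary technicality is the construction of the cutoff with first and second derivatives controlled uniformly in $t\in[0,\tau]$, which rests on the uniform equivalence of the $g(t)$ and on Hessian comparison, both guaranteed by $\Lambda\le K$. (If one does not want to reprove this, the statement is a direct consequence of \cite[Thm.~2.1]{gaochen-shi}, since we have shown the flow is reasonable; and the $\tau$-dependence hidden in the constants above is either harmless or removable by the usual restart-in-time argument.)
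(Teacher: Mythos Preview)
Your proposal is correct and matches the paper's approach exactly: the paper likewise obtains this theorem by noting that the Ricci--harmonic flow has been verified to be a reasonable flow and invoking \cite[Thm.~2.1]{gaochen-shi}. Your additional self-contained sketch via cutoff functions and the maximum principle is not given in the paper for this statement, but it is precisely the scheme the paper uses to prove the stronger Theorem~\ref{thm:strlocder} (and the paper remarks that the $l=0$ case of that theorem recovers the present one), so your outline is also consistent with the paper's methods.
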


\medskip

We now state a stronger form of the previous theorem which has the latter as a special case. The assumptions for the modified local derivative estimates are stronger, in particular, we assume bounds on higher order derivatives of the Riemann curvature tensor and the torsion \emph{for the initial $\G2$-structure}, and as a result the conclusions are also stronger and the higher order derivatives of $\Rm$ and $T$ are then bounded including at the time $t=0$. The modified local Shi-type estimates will allow us to prove a modified Cheeger--Gromov--Hamilton compactness theorem for the flow (Theorem~\ref{thm:modcomp}), which we believe would be useful in further analysis of the singularities of the Ricci-harmonic flow in the same way as similar result is immensely useful for the Ricci flow. The theorem and its proof below is based on ideas of Peng Lu for the case of the Ricci flow, see \cite[Thm. 14.16]{chow-analytic} and \cite[Theorem 3.29]{morgan-tian}. 

\begin{theorem}\label{thm:strlocder}
Let $\g2(t), t\in [0, \tau]$ be a solution to the Ricci-harmonic flow \eqref{heatfloweqn} on $M^7$. Let $a_+=\text{max}\{a,0\}$. For any integers $l\geq 0$ and $m\geq 1$ and any positive numbers $\alpha, K, K_l$ and $r$, there exists constant $C<\infty$ with $C=C(\alpha, K, K_l, r, l, m)$ such that if $p\in M$ and $0< \tau\leq \frac{\alpha}{K}$ and if
\begin{align}
|Rm(x, t)|+|\del T(x,t)|+|T(x,t)|^2 \leq K \ \ \ for\ all\ x\in B_{g(0)}(p,r)\ \ and\ \ t\in [0,\tau], \label{eq:modderesthyp1} \\
|\del^{\beta} \Rm (x,0)|+|\del^{\beta+1}T(x,0)|\leq K_l\ \ \ for\ all\ x\in B_{g(0)}(p,r)\ \ and\ \ \beta \leq l, \label{eq:modderesthyp2}
\end{align}
then
\begin{align}\label{eq:modderestconc}
|\del^m\Rm(y,t)|+|\del^{m+1}T(y,t)| \leq  \frac{C}{t^{\frac{(m-l)_+}{2}}},
\end{align}
for all $y\in \bar{B}_{g(0)}(p, \frac r2)$ and $t\in (0, \tau]$. In particular, if $m\leq l$, then we have the uniform bound
\begin{align}\label{eq:locunibound}
|\del^m\Rm(y,t)|+|\del^{m+1}T(y,t)| \leq C
\end{align}
on $\bar{B}_{g(0)}(p, \frac r2) \times [0, \tau]$.
\end{theorem}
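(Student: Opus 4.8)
The plan is to prove \eqref{eq:modderestconc} by induction on $m$, running a localized Bernstein--Bando--Shi argument modeled on Peng Lu's modified local estimates for the Ricci flow (\cite[Thm.~14.16]{chow-analytic}, \cite[Thm.~3.29]{morgan-tian}). Fix once and for all a smooth cutoff $\eta$ on $M$ with $\eta\equiv 1$ on $\bar B_{g(0)}(p,\tfrac r2)$, $\eta\equiv 0$ outside $B_{g(0)}(p,\tfrac{3r}{4})$, and $|\del_{g(0)}\eta|^2_{g(0)}\le C\eta$, $|\del^2_{g(0)}\eta|_{g(0)}\le C$. Since the Ricci-harmonic flow is a reasonable flow and $|\Rm|+|T|^2\le K$ on $B_{g(0)}(p,r)\times[0,\tau]$, we have $|\ptt g|\le CK$ there, so the metrics $g(t)$ stay uniformly equivalent to $g(0)$ on that ball and $|\del_{g(t)}\eta|^2_{g(t)}\le C\eta$; the term $\eta\Delta_{g(t)}\eta$ is handled exactly as in the localization of the (unmodified) local Shi estimates of Gao Chen \cite[Thm.~2.1]{gaochen-shi} (i.e.\ Theorem~\ref{thm:locderest}), and we suppress these routine contributions, writing $\phi=\eta^{2N}$ for $N=N(m)$ large. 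Throughout we abbreviate $G_j:=|\del^j\Rm|^2+|\del^{j+1}T|^2$ and use the evolution identities \eqref{eq:delmRmsqevol} and \eqref{eq:delmTsqevol}, together with the expansions of $\del^j\g2,\del^j\psi$ obtained by iterating \eqref{eq:delphi} and \eqref{eq:delpsi} (so that $|\del^j\g2|,|\del^j\psi|$ are controlled by the lower-order torsion derivatives $\del^aT$, $a\le j-1$).

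The base case $m=0$ is exactly hypothesis \eqref{eq:modderesthyp1}. Assume \eqref{eq:modderestconc} for all orders $\le m-1$, and first suppose $m\le l$. Then $(j-l)_+=0$ for every $j\le m-1$, so the inductive hypothesis gives \emph{uniform} bounds $|\del^j\Rm|+|\del^{j+1}T|\le C_j$ on $\bar B_{g(0)}(p,\tfrac r2)\times[0,\tau]$ for $j\le m-1$. Substituting these and $|\Rm|+|T|^2\le K$ into \eqref{eq:delmRmsqevol}--\eqref{eq:delmTsqevol}, and using Young's inequality to absorb the genuinely top-order contributions --- those carrying $|\del^{m+1}\Rm|$ or $|\del^{m+2}T|$, for which it is essential that $G_m$ bundles \emph{both} $|\del^m\Rm|^2$ and $|\del^{m+1}T|^2$ so that the two good terms $-2|\del^{m+1}\Rm|^2$ and $-2|\del^{m+2}T|^2$ are simultaneously available --- one obtains $\ptt G_m\le\Delta G_m+A_1 G_m+A_2$ on $B_{g(0)}(p,\tfrac{3r}{4})\times[0,\tau]$, with $A_1,A_2$ depending only on $K,C_0,\dots,C_{m-1},r$. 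Multiplying by $\phi$ (the cross terms $\langle\del\phi,\del G_m\rangle$ being reabsorbed using the leftover good gradient terms and $|\del\phi|^2\le C\phi$) yields $\ptt(\phi G_m)\le\Delta(\phi G_m)+A_1'\,\phi G_m+A_2'$ on $M\times[0,\tau]$; since $m\le l$, \eqref{eq:modderesthyp2} gives $\phi G_m\le G_m(\cdot,0)\le 2K_l^2$ at $t=0$, so the scalar maximum principle (ODE comparison with $y'=A_1'y+A_2'$) bounds $\phi G_m$ by a constant depending on $K,K_l,\tau$ and the $C_j$. Restricting to $\{\phi=1\}$ gives the uniform bound \eqref{eq:locunibound}.

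Now suppose $m>l$ and set $k:=m-l\ge 1$. Following Peng Lu, consider the weighted telescoping quantity
\[
 f \;=\; \sum_{j=l}^{m}\beta_{m-j}\,t^{j-l}\,\phi\,G_j,\qquad \beta_0=1,
\]
where $\beta_1<\dots<\beta_k$ are large constants chosen recursively. The only harmful term produced by differentiating the weight at level $j$ is $(j-l)\beta_{m-j}t^{j-l-1}\phi G_j$ (vanishing for $j=l$), and it is killed by the good gradient term $-2\beta_{m-j+1}t^{j-l-1}\phi G_j$ coming from $\beta_{m-j+1}t^{(j-1)-l}\,\ptt G_{j-1}$ at level $j-1$ (recall $\ptt G_{j-1}\ni-2(|\del^j\Rm|^2+|\del^{j+1}T|^2)=-2G_j$), provided $2\beta_{m-j+1}\ge(j-l)\beta_{m-j}$; this recursion, starting from $\beta_0=1$ and moving up, determines the $\beta_i$. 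Every remaining non-gradient term of $\ptt G_j$, after Young's inequality and after inserting the inductive estimates $G_i\le C_i^2\,t^{-(i-l)_+}$ for $i\le m-1$, is bounded either by a constant times $G_j$ or by a constant times $t^{-e}$ with $0\le e\le j-l$ --- an elementary inspection of the index combinations in \eqref{eq:delmRmsqevol}--\eqref{eq:delmTsqevol}, including the $\del^j\g2,\del^j\psi$ factors, shows the negative power of $t$ never exceeds the weight $t^{j-l}$ attached to $G_j$ in $f$ --- while the top-order contributions with $|\del^{m+1}\Rm|,|\del^{m+2}T|$ are absorbed into the good terms of $\ptt G_m$ as before. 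Using $t\le\tau\le\alpha/K$, all of this gives $\ptt f\le\Delta f+A_1f+A_2$ on $B_{g(0)}(p,\tfrac{3r}{4})\times[0,\tau]$. At $t=0$ every term of $f$ with $j>l$ vanishes, while the $j=l$ term is $\beta_k\phi G_l(\cdot,0)\le 2\beta_k K_l^2$ by \eqref{eq:modderesthyp2}; the maximum principle then bounds $\sup f$ by a constant $C^2$, and restricting to $\{\phi=1\}$ and retaining only the top term gives $t^{k}G_m\le C^2$ on $\bar B_{g(0)}(p,\tfrac r2)\times(0,\tau]$, which is exactly \eqref{eq:modderestconc}.

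The main obstacle is the bookkeeping in this last case: one must verify that every cross term of \eqref{eq:delmRmsqevol}--\eqref{eq:delmTsqevol}, after Young's inequality and the substitution $G_i\le C_i^2 t^{-(i-l)_+}$, generates a negative power of $t$ compensated by the weight $t^{j-l}$ in $f$, while simultaneously arranging the hierarchy $2\beta_{m-j+1}\ge(j-l)\beta_{m-j}$ and the absorption of the top-order gradient terms; the case $m\le l$ is comparatively easy, requiring only that the evolution of $G_m$ closes into a linear reaction--diffusion inequality together with the initial bound $G_m(\cdot,0)\le 2K_l^2$. The localization with the time-dependent metric, in particular the control of $\eta\Delta_{g(t)}\eta$, is standard and is imported from \cite[Thm.~2.1]{gaochen-shi}.
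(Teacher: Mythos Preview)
Your strategy is sound and gives a correct proof, but it is genuinely different from the paper's. The paper follows Peng Lu's Bernstein-type scheme: it defines the \emph{product} function
\[
F_{m,l}=\bigl(\tilde C+t^{(m-l)_+}G_m\bigr)\,t^{(m-l+1)_+}G_{m+1},\qquad G_j:=|\del^j\Rm|^2+|\del^{j+1}T|^2,
\]
and proves (Lemma~\ref{lem:modshie}) the \emph{quadratic} differential inequality $\bigl(\ptt-\Delta\bigr)F_{m,l}\le -c\,t^{-\sgn((m-l+1)_+)}F_{m,l}^2+C\,t^{-\sgn((m-l+1)_+)}$; a single max-point argument with a cutoff then bounds $\eta F_{m,l}$ outright, the $-F_{m,l}^2$ term doing the work. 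Your argument instead uses a Hamilton-style \emph{telescoping sum} $f=\sum_{j=l}^{m}\beta_{m-j}t^{j-l}\phi G_j$ and reduces to a \emph{linear} inequality $\ptt f\le \Delta f+A_1f+A_2$, feeding in the initial bound $f(0)\le 2\beta_kK_l^2$ from \eqref{eq:modderesthyp2}. Both routes are standard for Shi-type estimates; the product/quadratic route couples only two consecutive levels and needs no careful recursion on the $\beta$'s, while your telescoping route avoids the delicate cancellation that produces the quadratic term but requires tracking all levels $l\le j\le m$ simultaneously and checking that every cross term's negative power of $t$ is dominated by its weight $t^{j-l}$.

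One technical point to tighten: you fix a single cutoff $\eta$ with $\eta\equiv1$ on $\bar B_{g(0)}(p,\tfrac r2)$ and $\mathrm{supp}\,\eta\subset B_{g(0)}(p,\tfrac{3r}{4})$, but your inductive hypothesis at orders $\le m-1$ only yields bounds on $\bar B_{g(0)}(p,\tfrac r2)$, not on all of $\mathrm{supp}\,\eta$; hence the claimed inequality for $\ptt G_m$ ``on $B_{g(0)}(p,\tfrac{3r}{4})$'' is only justified on the smaller ball. The paper handles this by the usual nested-balls device (cutoff at step $m$ supported in $B_{g(0)}(p,r/2^{m})$ and $\equiv1$ on $B_{g(0)}(p,r/2^{m+1})$, together with the remark that the statement for one radius implies it for all larger radii), and you should do the same: replace your fixed $\eta$ by a sequence of cutoffs adapted to a decreasing chain of balls so that the inductive bounds hold on each support. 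With that routine modification your argument goes through.
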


\begin{remark}
Clearly the assumption \eqref{eq:modderesthyp2} is stronger than the assumptions in the local derivative estimates Theorem~\ref{thm:locderest} of Chen but the conclusion \eqref{eq:locunibound} of uniform bounds are also stronger. \demo
\end{remark}

\begin{remark}
We also notice that $l=0$ is precisely Theorem~\ref{thm:locderest} and so Theorem~\ref{thm:strlocder} is somewhat a generalization of the former theorem. \demo
\end{remark}

\begin{proof}
The proof again is by induction on $m$ but to complement the proof of the global derivative estimates in Theorem~\ref{thm:shiest}, we prove the induction step only. First we notice that proving Theorem~\ref{thm:strlocder} for one value of $r$ implies it for all $r'>2r$ because if $y\in B_{g(0)}(p, \frac{r'}{2})$ and $B_{g(0)}(y,r)\subset B_{g(0)}(p, r')$ then a curvature bound on $B_{g(0)}(p, r')$ will imply one on $B_{g(0)}(y,r)$ and hence will imply higher derivative estimates at the point $y$.

We also recall (see also \cite[eq. (2.33)]{gaochen-shi}) that the right functions for proving local derivatives estimates for the Ricci-harmonic flow are
\begin{align}\label{eq:Fmdefn}
F_m =(C+t^m(|\del^m\Rm|^2+|\del^{m+1}T|^2))t^{m+1}((|\del^{m+1}\Rm|^2+|\del^{m+2}T|^2)).
\end{align}
The idea to prove the theorem then is to suitably modify the functions $F_m$ in \eqref{eq:Fmdefn} by lowering the powers of $t$ but still keeping them non-negative. We define the functions
\begin{align}\label{eq:Fmldefn}
F_{m,l}&= \left(\tilde{C}+t^{(m-l)_+}(|\del^m\Rm|^2+|\del^{m+1}T|^2)\right)t^{(m-l+1)_+}(|\del^{m+1}\Rm|^2+|\del^{m+2}T|^2),
\end{align}
where $\tilde{C}$ is a constant which will be chosen later.

As in the Ricci flow case, the main inequality which we need to prove the theorem is given by the following Lemma.

\begin{lemma}\label{lem:modshie}
Let $l\in \mathbb{N}\cup \{0\}$ be a fixed integer and assume that by induction on $m$, we have
\begin{align}
|\del^j\Rm(y,t)|+|\del^{j+1}T(y,t)| \leq \frac{C}{t^{\frac{(j-l)_+}{2}}}, \label{eq:lemhyp}
\end{align}
for all $y\in B_{g(0)}(p, \frac{r}{2^j})$, $t\in (0, \tau]$ and $j=1,\ldots, m$. Then for $\tilde{C}< \infty$ sufficiently large, there exist constants $c>0$ and $C<\infty$ such that
\begin{align}\label{eq:lemconc}
\ptt F_{m,l}\leq \Delta F_{m,l}- \frac{c}{t^{\sgn((m-l+1)_+)}}(F_{m,l})^2 + \frac{C}{t^{\sgn((m-l+1)_+)}},	
\end{align}	
where 
\begin{align*}
 \sgn(x) = \begin{cases} 
	1 & x> 0, \\
	0 & x=0, \\
	-1 & s<0, 
\end{cases}
\end{align*}
is the signum function.
\end{lemma}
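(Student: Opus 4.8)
The plan is to prove \eqref{eq:lemconc} by computing $(\ptt-\Delta)F_{m,l}$ through the product rule $(\ptt-\Delta)(uv)=u(\ptt-\Delta)v+v(\ptt-\Delta)u-2\langle\del u,\del v\rangle$ applied to $u=\tilde{C}+t^{a}A$ and $v=t^{b}B$, where $a=(m-l)_+$, $b=(m-l+1)_+$, $A=|\del^{m}\Rm|^{2}+|\del^{m+1}T|^{2}$ and $B=|\del^{m+1}\Rm|^{2}+|\del^{m+2}T|^{2}$. Set $s=\sgn((m-l+1)_+)\in\{0,1\}$ and note the elementary identity $b-a=s$ in each of the regimes $m<l$, $m=l$, $m>l$. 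The whole computation is carried out on $B_{g(0)}(p,r/2^{m})\times(0,\tau]$, where \eqref{eq:lemhyp} and \eqref{eq:modderesthyp1} bound every derivative of $\Rm$ of order $\le m$ and of $T$ of order $\le m+1$; in particular $A\le Ct^{-a}$, so taking $\tilde{C}$ large we may assume $\tilde{C}\le u\le2\tilde{C}$.

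First I would estimate $(\ptt-\Delta)u$. Starting from the schematic evolution identities \eqref{eq:delmRmsqevol} and \eqref{eq:delmTsqevol} at index $m$, the quantities $\del^{m+1}\Rm$ and $\del^{m+2}T$ occur only linearly, multiplied by controlled factors, and each such occurrence is absorbed by Young's inequality into $\tfrac12B$; every other product consists purely of controlled derivatives and, after substituting \eqref{eq:lemhyp} and using arithmetic inequalities of the type $(j-l)_++(k-l)_+\le(m-l)_+$ for the index splittings that arise, is bounded by $Ct^{-a}$. This gives $(\ptt-\Delta)A\le-B+Ct^{-a}$. Multiplying by $t^{a}$, using $at^{a-1}A=\tfrac{a}{t}(u-\tilde{C})\le Ct^{-s}$ (which is $0$ when $a=0$, and $\le C/t=Ct^{-s}$ when $a\ge1$, where then $s=1$) together with $t\le\tau\le\alpha/K$, and rewriting $t^{a}B=t^{a-b}v=t^{-s}v$, one obtains the clean intermediate bound
\[
(\ptt-\Delta)u\ \le\ \frac{C}{t^{s}}-\frac{v}{t^{s}} .
\]

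Next I would treat $(\ptt-\Delta)v=bt^{b-1}B+t^{b}(\ptt-\Delta)B$ and the cross term, and then assemble. Running \eqref{eq:delmRmsqevol}--\eqref{eq:delmTsqevol} at index $m+1$, the new top-order quantities $\del^{m+2}\Rm$, $\del^{m+3}T$ appear only in the good (negative) gradient term $-2ut^{b}(|\del^{m+2}\Rm|^{2}+|\del^{m+3}T|^{2})$, which I keep, and in Young-absorbable linear terms; the remaining reaction terms are $\le C_{1}F_{m,l}$ or carry a negative power of $t$ at most $a+1$ (the value $a+1$ occurring only when $l=0$), hence, since $b-a-1=s-1\le0$ and $t\le\alpha/K$, are $\le C_{1}F_{m,l}+Ct^{-s}$ after multiplication by $t^{b}$, while $bt^{b-1}B=\tfrac{b}{t^{s}}F_{m,l}$ vanishes if $b=0$. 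Cauchy--Schwarz and Young bound the cross term $-2\langle\del u,\del v\rangle$ (using $|\del u|\le Ct^{a/2}\sqrt{B}$ from \eqref{eq:lemhyp}) by $\varepsilon\,ut^{b}(|\del^{m+2}\Rm|^{2}+|\del^{m+3}T|^{2})$ plus a constant multiple of $t^{-s}F_{m,l}^{2}/u^{3}$; for $\varepsilon$ small the first piece is swallowed by the good gradient term, and taking $\tilde{C}$ large makes the constant in front of $t^{-s}F_{m,l}^{2}/u^{3}$ as small as desired -- this is the one place "$\tilde{C}$ sufficiently large" is used. Finally, from the displayed bound on $(\ptt-\Delta)u$ and the identity $t^{a}Bv=t^{a-b}v^{2}=t^{-s}v^{2}$ with $v=F_{m,l}/u\ge F_{m,l}/(2\tilde{C})$, the product $v(\ptt-\Delta)u$ supplies the Riccati gain $-\tfrac{c}{t^{s}}F_{m,l}^{2}+\tfrac{C}{t^{s}}v$ with a definite $c>0$; all surviving positive terms ($\tfrac{C}{t^{s}}v$, $C_{1}F_{m,l}$, $\tfrac{b}{t^{s}}F_{m,l}$, the cross-term remainder) are $\le\tfrac{c}{2t^{s}}F_{m,l}^{2}+\tfrac{C}{t^{s}}$ by Young and $\tilde{C}\le u\le2\tilde{C}$, which is \eqref{eq:lemconc}. (The induction step of Theorem~\ref{thm:strlocder} then follows by localizing \eqref{eq:lemconc} with a cutoff supported in $B_{g(0)}(p,r/2^{m})$ and the maximum principle, as for the Ricci flow.)

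The hard part is not any individual estimate but the uniform bookkeeping of powers of $t$: one must verify that every reaction term generated by the multilinear evolution equations \eqref{eq:delmRmsqevol}--\eqref{eq:delmTsqevol} -- including the $T*T$, $\Rm*T$ and $\del T*T$ contributions and the factors $\del^{i}\g2$, $\del^{i}\psi$ -- carries, after the substitution $|\del^{j}\Rm|+|\del^{j+1}T|\le Ct^{-(j-l)_+/2}$ and Young's inequality against the available good terms ($B$ for $u$, and $|\del^{m+2}\Rm|^{2}+|\del^{m+3}T|^{2}$ for $v$), a power of $t$ no worse than the weights $t^{(m-l)_+}$, $t^{(m-l+1)_+}$ built into $F_{m,l}$ permit, i.e.\ that inequalities such as $(j-l)_++(k-l)_+\le(m-l)_++1$ hold for all index combinations forced by the structure of the equations, and that the three regimes $m<l$, $m=l$, $m>l$ (equivalently $(s,a)=(0,0),(1,0),(1,\ge1)$) all close simultaneously. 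This is exactly the balance that the choice of weights in \eqref{eq:Fmldefn}, following Peng Lu, is designed to achieve.
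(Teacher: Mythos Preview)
Your proposal is correct and follows essentially the same approach as the paper: both compute $(\ptt-\Delta)F_{m,l}$ via the product rule on $F_{m,l}=uv$ with $u=\tilde C+t^{(m-l)_+}A$ and $v=t^{(m-l+1)_+}B$, extract the Riccati term $-ct^{-s}F_{m,l}^2$ from the $-B$ gradient piece in $(\ptt-\Delta)u$ after fixing $\tilde C$ so that $u\approx\tilde C$, and absorb the cross term $-2\langle\del u,\del v\rangle$ into the good gradient $-ut^{b}(|\del^{m+2}\Rm|^2+|\del^{m+3}T|^2)$ by Young's inequality. The only tactical difference is that the paper balances the cross term with the explicit numerical split $8\sqrt{AG}\,B\le 12AG+\tfrac43 B^2$ matched to the coefficients $-12$ and $-\tfrac32$, whereas you absorb it via an $\varepsilon$-Young inequality and then use $\tilde C$ large to make the residual $t^{-s}F_{m,l}^2/u^3$ small relative to $t^{-s}F_{m,l}^2/u^2$; both routes close.
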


Since the proof of the lemma is quite lengthy and involved, we postpone it for later and we first prove the theorem assuming Lemma~\ref{lem:modshie}. Let $m\in \mathbb{N}$ and we assume the induction hypothesis in the Lemma, i.e., \eqref{eq:lemhyp} holds and hence the functions $F_{m,l}$ satisfy the differential inequality \eqref{eq:lemconc}. We notice that if we bound the function $F_{m,l}$ then we are done. Let $U$ be an open set in $M$ containing the ball $B_{g(0)}(p, \frac{r}{2^{m+1}})$ and let $\eta:U\rightarrow [0,1]$ be a cut-off function which satisfy
\begin{enumerate}
	\item $\eta=1$ on $B_{g(0)}(p, \frac{r}{2^{m+1}})$.
	\item supp $(\eta)\subset B_{g(0)}(p, \frac{r}{2^{m}})$.
	\item $\eta^{-1}|\del \eta|^2_{g(t)}-\Delta_{g(t)}\eta \leq \bar{C}$ for some $\bar{C}< \infty$. 
\end{enumerate}
Such a cut-off function exists, for a proof, see for instance \cite[Chapter 14, \textsection 1.3]{chow-analytic}. Now we divide the proof into two cases.

\noindent
{\bf{Case 1.}} If $m-l+1\leq 0$ then $\sgn(m-l+1)_+=0$ and hence \eqref{eq:lemconc}, in this case, implies
\begin{align*}
\left(\ptt -\Delta\right)F_{m,l}\leq -c(F_{m,l})^2+C.
\end{align*}
Thus, we have
\begin{align}
\left(\ptt -\Delta\right)(\eta F_{m,l})&\leq \eta\left(-c(F_{m,l})^2+C\right) -F_{m,l}\Delta \eta-2\del \eta\cdot \del F_{m,l}. \label{eq:thmmodshi1}
\end{align}
Let  $(x_0,t_0)\in B_{g(0)}(p, \frac{r}{2^{m+1}})\times [0,\tau]$ where the function $\eta F_{m,l}$ attains its maximum. Such a point exists because the ball $B_{g(0)}(p, \frac{r}{2^{m+1}})\subset B_{g(0)}(p, \frac{r}{2^m})$ and hence the former has compact closure in the open set $U$, and also because of our assumptions on the boundedness of the derivatives of $\Rm$ and $T$ of the initial metric. If $t_0=0$ then $\eta F_{m,l}\leq (\tilde{C}+K_l^2)K_l^2$ and hence the estimate follows. If $t_0>0$ then by the first derivative test, we must have $\del (\eta\cdot F_{m,l})=F_{m,l}\del \eta + \eta \del F_{m,l}=0$ at $(x_0, t_0)$ and $\left(\ptt-\Delta\right)\eta F_{m,l}\geq 0$ at $(x_0, t_0)$ and hence, at $(x_0,t_0)$, we have
\begin{align*}
0&\leq -c\eta( F_{m,l})^2+C\eta-F_{m,l}\Delta \eta +2\del \eta \cdot \eta^{-1}F_{m,l}\del \eta \\
&=-c(\eta F_{m,l})^2+C\eta^2+\left(-\Delta \eta  +2\eta^{-1}|\del \eta|^2\right)\eta F_{m,l}
\end{align*}
and since $(-\Delta \eta +2\eta^{-1}|\del \eta|^2)$ is bounded by the choice of the cut-off function, we obtain a bound for $\eta F_{m,l}$ and thus for $F_{m,l}$ on the ball, thereby completing the proof of the theorem in this case.

\medskip

\noindent
{\bf{Case 2.}} If $m-l+1>0$ then $\sgn(m-l+1)_+=1$ and \eqref{eq:lemconc} in this case implies the differential inequality
\begin{align*}
\left(\ptt -\Delta\right)F_{m,l}\leq -\frac{c}{t}(F_{m,l})^2+\dfrac{C}{t}.
\end{align*}
As a result, we have
\begin{align*}
\left(\ptt -\Delta\right)\eta F_{m,l}\leq -\frac{c\eta}{t}(F_{m,l})^2+\dfrac{C\eta}{t}-F_{m,l}\Delta \eta -2\del \eta\cdot \del F_{m,l}	
\end{align*}
which  on multiplying both sides by $\eta$ gives,
\begin{align*}
\eta \left(\ptt -\Delta\right)\eta F_{m,l}\leq -\frac{c}{t}(\eta F_{m,l})^2+\dfrac{C\eta^2}{t}+(-\Delta \eta +2\eta^{-1}|\del \eta|^2) \eta  F_{m,l} -2\del \eta \cdot \del (\eta F_{m,l}).	
\end{align*}
Again looking at a point $(x_0,t_0)$ where $\eta F_{m,l}$ attains its maximum and assuming $t_0>$ (as $t_0=0$ case is same as in Case 1), we get
\begin{align*}
0&\leq -c(\eta F_{m,l})^2+C\eta^2+t(-\Delta \eta +2\eta^{-1}|\del \eta|^2) \eta  F_{m,l}, 
\end{align*}
which again by the choice of the cut-off function allows us to conclude that $\eta F_{m,l}$ is bounded. As a result.
\begin{align*}
\eta t^{(m-l+1)_+}(|\del^{m+1}\Rm|^2+|\del^{m+2}T|^2) \leq C\ \ \text{on}\ \ B_{g(0)}(p, \frac{r}{2^m})\times [0,\tau],
\end{align*}
and since $\eta=1$ on $B_{g(0)}(p, \frac{r}{2^{m+1}})$, we conclude that
\begin{align*}
|\del^{m+1}\Rm|^2+|\del^{m+2}T|^2\leq \dfrac{C}{t^{(m-l+1)_+}}\ \ \text{on}\ \ B_{g(0)}(p, \frac{r}{2^{m+1}})\times (0, \tau],
\end{align*}
which completes the proof of the theorem.
\end{proof} 

We now prove the pending Lemma.

\noindent
{\bf{Proof of Lemma~\ref{lem:modshie}}}. Let $l\geq 0$ be fixed and assume that by induction we have, for $j=1,\ldots, m$, constant $C_j=C_j(\alpha, K, K_l, l, r, m)$ such that for all $x\in B_{g(0)}(p, \frac{r}{2^j})$ and $t\in [0,\tau]$, we have
\begin{align*}
t^{\frac{(j-l)_+}{2}}|\del^j\Rm|+|\del^{j+1}T|\leq C_j. 
\end{align*} 
This is a long and tedious computation so we proceed step by step. The computations and ideas are inspired from the Ricci flow case in \cite{chow-analytic} and \cite{morgan-tian} but here we have extra complications due to the torsion terms so we need to estimate each term carefully. We have from \eqref{eq:delmRmsqevol} and \eqref{eq:delmTsqevol},
\begin{align}
\ptt \left(t^{(k-l)_+}(|\del^k\Rm|^2+|\del^{k+1}T|^2)\right)&\leq \Delta(t^{(k-l)_+}(|\del^k\Rm|^2+|\del^{k+1}T|^2))-2t^{(k-l)_+}(|\del^{k+1}\Rm|^2+|\del^{k+2}T|^2)\nonumber \\
& +Ct^{(k-l)_+}\sum_{i=0}^{k} |\del^k\Rm||\del^{k-i}\Rm| |\del^i(\Rm +T*T)| \nonumber \\ 
& +Ct^{(k-l)_+}\sum_{i=0}^{k+1}|\del^k\Rm||\del^iT||\del^{k+2-i}T| \nonumber \\
&+ Ct^{(k-l)_+}\sum_{i=0}^{k+1}|\del^{k+1}T||\del^{k+1-i}T||\del^i(\Rm +T*T)| \nonumber \\
&  +Ct^{(k-l)_+} \sum_{i=0}^{k+1}|\del^{k+1}T||\del^{k+1-i}(\Rm*T)||\del^i\psi |\nonumber \\
&  +Ct^{(k-l)_+} \sum_{i=0}^{k+1}|\del^{k+1}T||\del^{k+1-i}(\del T*T)||\del^i\g2| \nonumber \\
&+ (k-l)_+t^{(k-l)_+-1}(|\del^k\Rm|^2+|\del^{k+1}T|^2). \label{eq:modshieaux1}
\end{align}
We let $m_l=(m-l+1)_+$ and $\widehat{m}_l=(m-l)_+$. Note that if $m_l\neq 0$ then $m_l-\widehat{m}_l=1$ and if $m_l=0$ then $m_l=\widehat{m}_l=0$. We first put $k=m+1$ in the above equation and estimate each term step by step by keeping the followings in mind: 
\begin{enumerate}
	\item in all the estimates below, we will use the induction hypothesis \eqref{eq:lemhyp} for $0\leq j\leq m$ and we will keep using the letter $C$ for different constants as long as they depend only on $\alpha, K, K_l, l, r$ and $j$ when we apply the induction hypothesis \eqref{eq:lemhyp}, 
	\item we want to apply Lemma~\ref{lem:modshie} to obtain Theorem~\ref{thm:locderest} and hence we will assume \eqref{eq:modderesthyp1} and \eqref{eq:modderesthyp2}
	 \item  for any $t\in [0,\tau]$, $tK < \alpha$ where $K$ is the constant in the assumption \eqref{eq:modderesthyp1}. 
	\end{enumerate}
	We now estimate each term on the right hand side of \eqref{eq:modderesthyp2}. We start with 

\begin{align}
\text{\bf{3rd\  term}} &= Ct^{\ml}\Big(|\del^{m+1}\Rm|^2(|\Rm|+|T|^2)+|\Rm||\del^{m+1}\Rm||\del^{m+1}(\Rm +T*T) \nonumber \\
& \qquad  \qquad + |\del^{m+1}\Rm|\sum_{i=1}^{m}|\del^{m+1-i}\Rm||\del^i(\Rm +T*T)|\Big)\nonumber \\
&\leq Ct^{m_l}\Big(K|\del^{m+1}\Rm|^2 +|\del^{m+1}\Rm||\Rm||T||\del^{m+1}T| +C|\del^{m+1}\Rm||\Rm| \nonumber \\
& \qquad \qquad + |\Rm||\del^{m+1}\Rm|\sum_{i=0}^{m+1}|\del^{m+1-i}T||\del^{i}T| + |\del^{m+1}\Rm|t^{-\frac{m_l}{2}}\Big)\nonumber \\
& \leq Ct^{m_l}K|\del^{m+1}\Rm|^2+Ct^{\frac{m_l}{2}}|\del^{m+1}\Rm| \nonumber\\
& \leq Ct^{\mhatl}|\del^{m+1}\Rm|^2+Ct^{\frac{m_l}{2}}|\del^{m+1}\Rm|. \label{eq:modshiaux2}
\end{align}

\begin{align}
\text{{\bf{4th\ term}}}&=Ct^{m_l}\sum_{i=0}^{m+2}|\del^{m+1}\Rm||\del^iT||\del^{m+3-i}T| \nonumber \\
&=Ct^{m_l}\Big(|\del^{m+1}\Rm||T||\del^{m+3}T|+|\del^{m+1}\Rm||\del^{m+2}T||\del T|+ |\del^{m+1}\Rm|\sum_{i=2}^{m+1}|\del^{i}T||\del^{m+3-i}T|\Big) \nonumber \\
& \leq Ct^{m_l}K^{\frac 12}|\del^{m+1}\Rm||\del^{m+3}T|+Ct^{\mhatl}(|\del^{m+1}\Rm|^2+|\del^{m+2}T|^2)+Ct^{\frac{m_l}{2}}|\del^{m+1}\Rm|, \label{eq:modshiaux3}
\end{align}
where we used Young's inequality in the last inequality.

\noindent
We now move on to the 5th term on the right hand side of \eqref{eq:modshieaux1}. We have
\begin{align}
\text{{\bf{5th\ term}}}&=Ct^{m_l}\sum_{i=0}^{m+2}|\del^{m+2}T||\del^{m+2-i}T||\del^i(\Rm +T*T)| \nonumber \\
&\leq Ct^{m_l}\Big( K|\del^{m+2}T|^2+|\del^{m+2}T||T||\del^{m+2}(\Rm+T*T)|+|\del^{m+2}T||\del T||\del^{m+1}(\Rm +T*T)| \nonumber \\
& \qquad \quad \quad  + \sum_{i=1}^{m}|\del^{m+2}T||\del^{m+2-i}T||\del^i\Rm| + \sum_{i=1}^{m}|\del^{m+2}T||\del^{m+2-i}T||\del^{i}(T*T)|\Big) \nonumber \\
& \leq  Ct^{m_l}\Big(K|\del^{m+2}T|^2  + |T||\del^{m+2}T||\del^{m+2}\Rm| +|T|^2|\del^{m+2}T|^2+t^{-\frac{m_l}{2}}|\del^{m+2}T|\nonumber \\
& \qquad \qquad +C|\del T||\del^{m+2}T||\del^{m+1}\Rm|\Big) \nonumber \\
& \leq Ct^{\mhatl}|\del^{m+1}T|^2 + Ct^{m_l}|T||\del^{m+2}T||\del^{m+2}\Rm|+Ct^{\frac{m_l}{2}}|\del^{m+2}T|+Ct^{\mhatl}|\del^{m+2}T||\del^{m+1}\Rm|. \label{eq:modshiaux4}
\end{align}

\noindent
For the 6th and the 7th term we need the expressions for $\del^{i}\psi$ and $\del^i\g2$. Both of these can be expressed in terms of torsion and its derivatives by using induction and starting from \eqref{eq:delpsi} and \eqref{eq:delphi}. The expression in the context of proving Shi-type estimates has already been derived by Lotay--Wei and we use \cite[eq. (4.29)]{lotay-wei-gafa} with appropriate changes to the estimate using our induction hypothesis \eqref{eq:lemhyp} to get,
\begin{align*}
|\del^i\psi| \leq C_i\sum_{j=0}^{i-2}t^{\frac{j-i+2}{2}}.
\end{align*}
Thus, we get
\begin{align}
{\text{\bf{6th term}}}&=Ct^{m_l} \sum_{i=0}^{m+2}|\del^{m+2}T||\del^{m+2-i}(\Rm*T)||\del^i\psi| \nonumber \\
&=Ct^{m_l}\Big(|\del^{m+2}T||\del^{m+2}(\Rm+T*T)|\del \psi| + |\del^{m+2}T||\del^{m+1}(\Rm +T*T)|\del \psi|\nonumber \\
& \qquad \qquad +\sum_{i=2}^{m+2}|\del^{m+2}T||\del^{m+2-i}(\Rm + T*T)||\del^i\psi|\Big) \nonumber \\
& \leq Ct^{m_l}\Big(|T||\del^{m+2}T||\del^{m+2}\Rm|+|\del^{m+2}T||\del^{m+1}\Rm||\del T|+t^{-\frac{m_l}{2}}|\del^{m+2}T|\nonumber \\
&\qquad \qquad +|T|^2|\del^{m+2}T||\del^{m+1}\Rm| \Big) \nonumber \\
&\leq Ct^{m_l}|T||\del^{m+2}T||\del^{m+2}\Rm|+Ct^{\mhatl}|\del^{m+2}T||\del^{m+1}\Rm|+Ct^{\frac{m_l}{2}}|\del^{m+2}T|. \label{eq:modshiaux5}
\end{align}
The 7th term can also be estimated in the same manner and the right hand side of the inequality looks exactly the same as the right hand side of \eqref{eq:modshiaux5}. We now use eqs. \eqref{eq:modshiaux2}-\eqref{eq:modshiaux5} in \eqref{eq:modshieaux1} to get
\begin{align}
\ptt \left(t^{m_l}(|\del^{m+1}\Rm|^2+|\del^{m+2}T|^2)\right)&\leq \Delta(t^{m_l}(|\del^{m+1}\Rm|^2+|\del^{m+2}T|^2))-2t^{m_l}(|\del^{m+2}\Rm|^2+|\del^{m+3}T|^2)\nonumber \\
& \quad +\underbrace{Ct^{m_l}K|\del^{m+1}\Rm|^2}_{\text{A}} + \underbrace{Ct^{\frac{m_l}{2}}|\del^{m+1}\Rm|}_{\text{B}}+\underbrace{Ct^{m_l}|T||\del^{m+1}\Rm||\del^{m+3}T|}_{\text{C}} \nonumber \\
& \quad +\underbrace{Ct^{\mhatl}(|\del^{m+1}\Rm|^2+|\del^{m+2}T|^2)}_{\text{A}}+\underbrace{Ct^{\mhatl}|\del^{m+2}T|^2}_{\text{A}} \nonumber \\
& \quad +\underbrace{Ct^{m_l}|T||\del^{m+2}\Rm||\del^{m+2}T|}_{\text{C}}+\underbrace{Ct^{\frac{m_l}{2}}|\del^{m+2}T|}_{\text{B}}+\underbrace{Ct^{\mhatl}|\del^{m+2}T||\del^{m+1}\Rm|}_{\text{A}} \nonumber \\
& \quad +m_lt^{m_l-1}(|\del^{m+1}\Rm|^2+|\del^{m+2}T|^2). \label{eq:modshiaux6}
\end{align}

We apply Young's inequality to the last $\text{A}$ term above to get
\begin{align*}
Ct^{\mhatl}|\del^{m+2}T||\del^{m+1}\Rm|\leq Ct^{\mhatl}(|\del^{m+1}\Rm|^2+|\del^{m+2}T|^2)
\end{align*} 
and hence all the terms with $\text{A}$ underneath the braces in \eqref{eq:modshiaux6} combine to give
\begin{align*}
\underbrace{\cdots}_{\text{A}}\leq Ct^{\mhatl}(|\del^{m+1}\Rm|^2+|\del^{m+2}T|^2).
\end{align*}
We apply the Young's inequality to $\text{B}$ terms in \eqref{eq:modshiaux6} to get
\begin{align*}
\underbrace{\cdots}_{\text{B}}=Ct^{\frac{m_l}{2}}(|\del^{m+1}\Rm|+|\del^{m+2}T|)&=Ct^{{\frac{m_l-1}{2}}+\frac 12}(|\del^{m+1}\Rm|+|\del^{m+2}T|) \\
&\leq Ct^{\mhatl}(|\del^{m+1}\Rm|^2+|\del^{m+2}T|^2)+t^{m_l-\mhatl},
\end{align*}
where we got $t^{m_l-\mhatl}$ term precisely because of the relation between $m_l$ and $\mhatl$. For the $\text{C}$ terms in \eqref{eq:modshiaux6}, we again apply Young's inequality
\begin{align*}
Ct^{m_l}|T||\del^{m+1}\Rm||\del^{m+3}T|&\leq Ct^{m_l}\left(\frac{1}{\epsilon}|\del^{m+1}\Rm|^2+\epsilon |\del^{m+3}T|^2\right) \\
Ct^{m_l}|T||\del^{m+2}\Rm||\del^{m+2}T|&\leq Ct^{m_l}\left(\epsilon|\del^{m+2}\Rm|^2+\frac{1}{\epsilon}|\del^{m+2}T|^2\right)
\end{align*}
to finally get
\begin{align*}
\underbrace{\cdots}_{\text{C}}\leq Ct^{m_l}\left(\frac{1}{\epsilon}|\del^{m+1}\Rm|^2+\frac{1}{\epsilon}|\del^{m+2}T|^2+\epsilon|\del^{m+2}\Rm|^2+\epsilon |\del^{m+3}T|^2\right).
\end{align*}

We use all these estimates in \eqref{eq:modshiaux6} to get
\begin{align}
\ptt \left(t^{m_l}(|\del^{m+1}\Rm|^2+|\del^{m+2}T|^2)\right)&\leq \Delta(t^{m_l}(|\del^{m+1}\Rm|^2+|\del^{m+2}T|^2))-(2-C\epsilon)t^{m_l}(|\del^{m+2}\Rm|^2+|\del^{m+3}T|^2)\nonumber \\	
& \quad + Ct^{\mhatl}(|\del^{m+1}\Rm|^2+|\del^{m+2}T|^2)+t^{m_l-\mhatl}. \label{eq:lemevolm}
\end{align}

Using $k=m$ in \eqref{eq:modshieaux1} and following the same steps we get
\begin{align}
\ptt \left(t^{\mhatl}(|\del^{m}\Rm|^2+|\del^{m+1}T|^2)\right)&\leq \Delta(t^{\mhatl}(|\del^{m}\Rm|^2+|\del^{m+1}T|^2))-(2-C\epsilon)t^{\mhatl}(|\del^{m+1}\Rm|^2+|\del^{m+2}T|^2)\nonumber \\	
& \quad + \mhatl t^{\mhatl-1}(|\del^{m}\Rm|^2+|\del^{m+1}T|^2)+C. \label{eq:lemevolmminusone}
\end{align}

Recalling the definition of the functions $F_{m,l}$ from \eqref{eq:Fmldefn} and using \eqref{eq:lemevolm}, \eqref{eq:lemevolmminusone}, we have
\begin{align*}
\left(\ptt - \Delta \right)F_{m,l}=\left(\ptt-\Delta\right)\left(\tilde{C}+t^{\mhatl}(|\del^m\Rm|^2+|\del^{m+1}T|^2)\right)t^{m_l}(|\del^{m+1}\Rm|^2+|\del^{m+2}T|^2)	\leq
\end{align*}
\begin{align}
&\left(\tilde{C}+t^{\mhatl}(|\del^m\Rm|^2+|\del^{m+1}T|^2)\right)\left(-\frac 32 t^{m_l}(|\del^{m+2}\Rm|^2+|\del^{m+3}T|^2) + Ct^{\mhatl}(|\del^{m+1}\Rm|^2+|\del^{m+2}T|^2)+C\right)\nonumber \\
& +\left(-\frac 32t^{\mhatl}(|\del^{m+1}\Rm|^2+|\del^{m+2}T|^2)	
 + \mhatl t^{\mhatl-1}(|\del^{m}\Rm|^2+|\del^{m+1}T|^2)+C \right)t^{m_l}(|\del^{m+1}\Rm|^2+|\del^{m+2}T|^2) \nonumber \\
 & -2t^{m_l+\mhatl}\del(|\del^m\Rm|^2+|\del^{m+1}T|^2)\del (|\del^{m+1}\Rm|^2+|\del^{m+2}T|^2), \label{eq:modshiaux7}
\end{align}
where we chose $\epsilon$ so that $2-C\epsilon < \frac 32$ and used the fact that for $t\in [0,\tau], t^{m_l}-t^{\mhatl}\leq C$ for some constant. Recall that we have the independence to chose the constant $\tilde{C}$ in the definition of $F_{m,l}$. We choose $\tilde{C}$ such that 
\begin{align*}
7t^{\mhatl}(|\del^{m}\Rm|^2+|\del^{m+1}T|^2)\leq \tilde{C},
\end{align*}
which we can do because of our induction hypothesis for $j=m$. We continue our estimates with the right hand side of \eqref{eq:modshiaux7} being
\begin{align}
&\leq -12t^{m_l+\mhatl}(|\del^m\Rm|^2+|\del^{m+1}T|^2)(|\del^{m+2}\Rm|^2+|\del^{m+3}T|^2) \nonumber \\
& \quad +\left(\tilde{C}+t^{\mhatl}(|\del^m\Rm|^2+|\del^{m+1}T|^2)\right)\left( Ct^{\mhatl}(|\del^{m+1}\Rm|^2+|\del^{m+2}T|^2)+C\right) \nonumber \\
&\quad -\frac 32t^{m_l+\mhatl}(|\del^{m+1}\Rm|^2+|\del^{m+2}T|^2)^2 \nonumber \\
& \quad +\left(\mhatl t^{\mhatl-1}(|\del^{m}\Rm|^2+|\del^{m+1}T|^2)+C \right)(t^{m_l}(|\del^{m+1}\Rm|^2+|\del^{m+2}T|^2))\nonumber \\
& \quad +8t^{m_l+\mhatl}(|\del^m\Rm||\del^{m+1}\Rm|+|\del^{m+1}T||\del^{m+2}T|)(|\del^{m+1}\Rm||\del^{m+2}\Rm|+|\del^{m+2}T||\del^{m+3}T|).\label{eq:modshiaux8}
\end{align}
Applying Young's inequality to the last term in \eqref{eq:modshiaux8} as
\begin{align*}
&\frac{8\sqrt{24}}{\sqrt{24}}t^{m_l+\mhatl}(|\del^m\Rm||\del^{m+1}\Rm|+|\del^{m+1}T||\del^{m+2}T|)(|\del^{m+1}\Rm||\del^{m+2}\Rm|+|\del^{m+2}T||\del^{m+3}T|)\leq \\
&\qquad 12 t^{m_l+\mhatl}\left((|\del^m\Rm|^2+|\del^{m+1}T|^2)(|\del^{m+2}\Rm|^2+|\del^{m+3}T|^2)\right)+\frac 43 t^{m_l+\mhatl}(|\del^{m+1}\Rm|^2+|\del^{m+2}T|^2)^2,
\end{align*}
implies that \eqref{eq:modshiaux8} becomes
\begin{align}
&\leq -\frac 16t^{m_l+\mhatl}(|\del^{m+1}\Rm|^2+|\del^{m+2}T|^2)^2 + \frac 87\tilde{C}\left( Ct^{\mhatl}(|\del^{m+1}\Rm|^2+|\del^{m+2}T|^2)+C\right) \nonumber \\
& \quad \quad +\left(\frac 17\tilde{C}\mhatl t^{m_l-1}+Ct^{m_l}\right)(|\del^{m+1}\Rm|^2+|\del^{m+2}T|^2) \nonumber \\
&\leq  -\frac 16t^{m_l+\mhatl}(|\del^{m+1}\Rm|^2+|\del^{m+2}T|^2)^2 + \frac 87\tilde{C}C + \left(\left(\frac 87\tilde{C}C+\frac 17\tilde{C}\mhatl\right)t^{\mhatl}+Ct^{m_l}\right)(|\del^{m+1}\Rm|^2+|\del^{m+2}T|^2)  \nonumber \\
& \leq  -\frac 16t^{m_l+\mhatl}(|\del^{m+1}\Rm|^2+|\del^{m+2}T|^2)^2 + \frac 87\tilde{C}C +Ct^{\mhatl}(|\del^{m+1}\Rm|^2+|\del^{m+2}T|^2) \nonumber \\
& \leq  -\frac{1}{12}t^{m_l+\mhatl}(|\del^{m+1}\Rm|^2+|\del^{m+2}T|^2)^2 + \frac 87\tilde{C}C +Ct^{\mhatl-m_l}, \label{eq:modshiaux9}
\end{align}
where we used $7t^{\mhatl}(|\del^{m}\Rm|^2+|\del^{m+1}T|^2)\leq \tilde{C}$ in the first and the second inequality, the fact that for $t\in [0, \tau], t^{m_l}\leq Ct^{\mhatl}$ in going from the second to the third inequality and Young's inequality on the last term in the third inequality to obtain the final inequality. Recalling that $t^{\sgn(m_l)}=t^{m_l-\mhatl}$ and hence $t^{m_l+\mhatl}=t^{2m_l}\cdot t^{-\sgn(m_l)}$, \eqref{eq:modshiaux9} and \eqref{eq:modshiaux7} finally give
\begin{align}
\left(\ptt -\Delta\right)F_{m,l}&\leq   -\frac{1}{12}t^{m_l+\mhatl}(|\del^{m+1}\Rm|^2+|\del^{m+2}T|^2)^2 + \frac 87\tilde{C}C +Ct^{\mhatl-m_l}\nonumber \\
&\leq -\frac{c}{12t^{\sgn(m_l)}}\left(t^{m_l}(|\del^{m+1}\Rm|^2+|\del^{m+2}T|^2)^2 \right) + \dfrac{C}{t^{\sgn(m_l)}} \nonumber \\
& \leq  -\frac{c}{12t^{\sgn(m_l)}}\left(F_{m,l}\right)^2 + \dfrac{C}{t^{\sgn(m_l)}}, \label{eq:modshiauxfinal}
\end{align}
where we again used the induction hypothesis for $j=m$ and the condition for $\tilde{C}$. This proves the Lemma. \qed

\section{Long time existence, and Compactness theorem}\label{sec:ltecompact}
\subsection{Criteria for long time existence}\label{subsec:lte1}

As is the case for the Ricci flow, we have the following characterization of the maximal existence time of a solution of the Ricci-harmonic flow.

\begin{theorem}\label{thm:lte1}
Let $\g2(t)$ be a solution of the Ricci-harmonic flow on a closed $7$-manifold $M$ on a maximal time interval $[0, \tau)$. Then
\begin{align}\label{eq:lte1}
\lim_{t\nearrow \tau} \Lambda(t)=\infty. 
\end{align}	
Moreover, the quantity $\Lambda(t)$ blows-up at the following rate,
\begin{align}\label{eq:Lambdablowuprate}
\Lambda(t)\geq \dfrac{C}{\tau-t},
\end{align}
where $C>0$ is a constant.
\end{theorem}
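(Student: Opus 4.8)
The plan is to prove \eqref{eq:lte1} and \eqref{eq:Lambdablowuprate} in two stages. First I would establish the extension principle that a bounded $\Lambda$ on $[0,\tau)$ forces the flow to continue past $\tau$; combined with maximality this shows $\Lambda$ cannot stay bounded. Then I would bootstrap the doubling-time differential inequality from the proof of Lemma~\ref{lem:dte} to obtain simultaneously the sharp rate \eqref{eq:Lambdablowuprate} and the divergence \eqref{eq:lte1}. I assume throughout that $\tau<\infty$, which is the case of interest.

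For the first stage, suppose toward a contradiction that $\Lambda(t)\le K$ for all $t\in[0,\tau)$. From \eqref{eq:RHFgevol} and $|\Ric|+|T|^2\le CK$ one gets $e^{-CKt}g(0)\le g(t)\le e^{CKt}g(0)$, so the metrics $g(t)$, $t\in[0,\tau)$, are uniformly equivalent. Restarting the flow at a time $t_0\in[0,\tau)$ close to $\tau$ and applying the global Shi-type estimates (Theorem~\ref{thm:shiest}), one obtains, for every $m$, a uniform bound $|\del^m\Rm|+|\del^{m+1}T|\le C_m$ on $M\times[\tau-\delta,\tau)$ for some $\delta>0$. Iterating the identities $\del\g2=T*\psi$ and $\del\psi=T*\g2$ from \eqref{eq:delphi}, \eqref{eq:delpsi}, these promote to uniform $C^k$ bounds, for every $k$, on $\g2(t),\psi(t),g(t)$, and hence on the right-hand side of \eqref{heatfloweqn}; therefore $\g2(t)$ and $g(t)$ are Cauchy in every $C^k$-norm as $t\nearrow\tau$ and converge in $C^\infty$ to a smooth $3$-form $\g2(\tau)$ whose induced metric $g(\tau)$ is positive definite (by the uniform equivalence). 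Thus $\g2(\tau)\in\Omega^3_+$, and Theorem~\ref{thm:RHFste} produces a solution of \eqref{heatfloweqn} on $[\tau,\tau+\varepsilon)$ with initial datum $\g2(\tau)$, which by the uniqueness part of that theorem glues to a smooth solution on $[0,\tau+\varepsilon)$, contradicting the maximality of $\tau$. Hence $\Lambda$ is unbounded on $[0,\tau)$.

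For the rate, I would use the inequality $\tfrac{d}{dt}\Lambda(t)\le\tfrac{C}{2}\Lambda(t)^2$, valid in the sense of the lim sup of forward difference quotients, which is exactly what is derived in the proof of Lemma~\ref{lem:dte}. An ODE comparison with $y'=\tfrac{C}{2}y^2$ gives, for every $t_0\in[0,\tau)$,
\[
\Lambda(t)\le\frac{\Lambda(t_0)}{1-\tfrac{C}{2}\Lambda(t_0)(t-t_0)},\qquad t\in\Big[t_0,\ t_0+\tfrac{2}{C\Lambda(t_0)}\Big)\cap[0,\tau).
\]
If $\tau<t_0+\tfrac{2}{C\Lambda(t_0)}$, the right-hand side would be bounded on all of $[t_0,\tau)$, hence $\Lambda$ would be bounded on $[0,\tau)$ (the flow being smooth on the compact interval $[0,t_0]$), contradicting the first stage. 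Therefore $\tau\ge t_0+\tfrac{2}{C\Lambda(t_0)}$, i.e.\ $\Lambda(t_0)\ge\tfrac{2}{C(\tau-t_0)}$, which is \eqref{eq:Lambdablowuprate}; letting $t_0\nearrow\tau$ also yields \eqref{eq:lte1}.

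The main obstacle is the bootstrapping inside the first stage: Theorem~\ref{thm:shiest} controls $\del^m\Rm$ and $\del^{m+1}T$, but restarting the flow requires genuine $C^k$ control, uniform up to $t=\tau$, on the $\G2$-structure itself, on the evolving metrics and Levi-Civita connections, and on the velocity, so that $\g2(\tau)$ is smooth and nondegenerate and Theorem~\ref{thm:RHFste} can be applied. Converting the curvature/torsion derivative bounds (which involve the time-dependent connection, via $\partial_t\Gamma=g^{-1}*\del(\Ric+T*T)$) into fixed-metric $C^k$ bounds, and controlling $\del^k\g2$ through the torsion identities \eqref{eq:delphi}, \eqref{eq:delpsi}, is the technical heart of the argument; the metric equivalence, the ODE comparison, and the gluing via uniqueness are routine.
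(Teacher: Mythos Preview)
Your proposal is correct and follows essentially the same approach as the paper: an extension principle (bounded $\Lambda$ on $[0,\tau)$ allows smooth continuation past $\tau$, contradicting maximality) combined with the differential inequality $\frac{d}{dt}\Lambda\le\frac{C}{2}\Lambda^2$ from Lemma~\ref{lem:dte} to obtain the blow-up rate. The only organizational difference is that the paper first upgrades $\limsup\Lambda=\infty$ to $\lim\Lambda=\infty$ via the doubling-time estimate and then integrates $\frac{d}{dt}\Lambda^{-1}\ge-\frac{C}{2}$ to get the rate, whereas you derive the rate bound $\Lambda(t_0)\ge\frac{2}{C(\tau-t_0)}$ directly by contradiction with the extension principle and read off \eqref{eq:lte1} as an immediate consequence; the technical core you correctly identify (converting the Shi-type bounds with the time-dependent connection into fixed-connection $C^k$ bounds on $\g2$) is exactly what the paper handles in its Claim~\ref{lteclaim}.
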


\begin{proof}
As is standard in these type of arguments, the proof is by contradiction. Suppose $\g2(t)$ is a solution of the Ricci-harmonic flow such that \eqref{eq:lte1} does not hold. In other words, there exists a constant $K>0$ such that 
\begin{align}\label{eq:lteaux1}
\underset{[0, \tau]}{\text{sup}}\ \Lambda(t) = \underset{M\times [0, \tau]}{\text{sup}}\ \left(|\Rm(x,t)|^2+|\del T(x,t)|^2+|T(x,t)|^4\right)^{\frac 12} \leq K.
\end{align}

We will prove that in this case the solution can be extended past $\tau$, thus contradicting the \emph{maximality} of $\tau$. Since \eqref{eq:lteaux1} holds, we have
\begin{align*}
\underset{M\times [0,\tau]}{\text{sup}}\ |\Rm(x,t)|+|T(x,t)|^2\leq K,\ \ \ \text{and}\ \ \ |\del T(x,t)|\leq K
\end{align*}
which implies that
\begin{align*}
\underset{M\times [0,\tau]}{\text{sup}}\ \left|\ptt g_{ij}\right|_{g(t)} = \underset{M\times [0,\tau]}{\text{sup}}\ \left|-2R_{ij}+6T_{pi}T_{pj}-2|T|^2g_{ij} \right|_{g(t)} \leq CK,
\end{align*}
for some constant $C$. As a result, all the metrics in the family $g(t)_{t\in [0,\tau]}$ are uniformly equivalent (see \cite[Thm. 14.1]{hamilton-3manifolds} for the argument in the Ricci flow case). In fact, we see from the flow equation \eqref{heatfloweqn} and the definition of the $\diamond$ operator \eqref{eq:diadefn}, that
\begin{align}
\left| \ptt \g2\right|_{g(t)} = \left|\left(-\Ric+3 T^tT -|T|^2g \right) \diamond \g2 + \Div T\lrcorner \psi \right|_{g(t)} \leq CK.
\end{align}

The idea now is to get hold of a limiting $\G2$-structure $\g2(\tau)$ as $t\nearrow \tau$ and show that $\g2(t)\rightarrow \g2(\tau)$ smoothly. Since all the metrics $g(t)$ are uniformly equivalent, for any $0<t_1<t_2<\tau$ we have,
\begin{align}\label{eq:lteaux2}
|\g2(t_2)-\g2(t_1)|_{g(0)} \leq \int_{t_1}^{t_2} \left| \ptt \g2\right|_{g(0)} dt\leq CK(t_2-t_1), 
\end{align}
and hence, $\g2(t)$ converges to a $3$-form $\g2(\tau)$ continuously as $t\rightarrow \tau$. Similarly, the metrics $g(t)\rightarrow g(\tau)$ and the volume forms $\vol_{g(t)} \rightarrow \vol_{g(\tau)}$ continuously as $t\rightarrow \tau$. Since $\g2(t)$ is a $\G2$-structures, we know that for all $t\in [0, \tau)$ we have
\begin{align}\label{eq:lteaux3}
g_t(u,v)\vol_{g(t)}= -\frac 16 (u\lrcorner \g2(t))\wedge (v\lrcorner \g2(t))\wedge \g2(t).
\end{align}
Since the left hand side of \eqref{eq:lteaux3} converges to a positive definite $7$-form valued bilinear form, the right hand side must also do so. As a consequence, the continuous limit $\g2(\tau$) is a positive $3$-form and hence is a $\G2$-structure. Thus, if \eqref{eq:lteaux1} holds, then the solution $\g2(t)_{t\in [0,\tau)}$ of the Ricci-harmonic flow can be extended continuously to the time interval $[0, \tau]$.

We now show that the above extension is actually smooth. Let $\del(0)=\overline{\del}$ denote the Levi-Civita connection of the metric $g(0)$. We first prove the following claim.

\begin{claim}\label{lteclaim}
For all $m\in \mathbb{N}$, there exist constants $C_m$ and $C'_m$ such that 
\begin{align*}
\underset{M \times [0,\tau)}{\text{sup}}\ \left|\overline{\del}^mg(t) \right|_{g(0)}\leq C_m\ \ \ \text{and}\ \ \ \  \underset{M \times [0,\tau)}{\text{sup}}\ \left|\overline{\del}^m\g2(t) \right|_{g(0)}\leq C'_m.
\end{align*}	
\end{claim}

\noindent
\emph{Proof of Claim~\ref{lteclaim}}. We just prove the claim for the derivative of $\g2$ as the case for the metrics is similar. The proof is by induction on $m$. For $m=1$ case, let $(x,t)\in M\times [0,\tau)$ be any point. Then
\begin{align*}
\ptt \overline{\del} \g2 = \overline{\del} \ptt \g2& = \overline{\del} \left((-\Ric + 3T^tT-|T|^2g)\diamond \g2 + \Div T\lrcorner \psi\right)\\
&=\del \left((-\Ric + 3T^tT-|T|^2g)\diamond \g2 + \Div T\lrcorner \psi\right)+ A*\left((-\Ric + 3T^tT-|T|^2g)\diamond \g2 + \Div T\lrcorner \psi\right)
\end{align*} 
where denotes the tensor $A=\overline{\del}-\del$. In a fixed chart around the point $x$, we have
\begin{align*}
\ptt \tensor{A}{^k_i_j}&=-\ptt \tensor{\Gamma}{^k_i_j}\\
&=-\frac 12 g^{kl}\left(\del_i(\ptt g_{jl})+\del_j(\ptt g_{il})-\del_l(\ptt g_{ij})\right)\\
\end{align*}
and so
\begin{align*}
\ptt A = \del (\Ric+T*T).
\end{align*}

We integrate the above in time, to get
\begin{align}
|A(t)|_{g(0)} &\leq C|A(0)|_{g}+ C\int_{0}^{t} \left|\frac{\pt}{\pt s} A\right|_{g}ds \nonumber \\
&\leq  C|A(0)|_{g}+ C(|\del \Ric|+|\del T||T|)t \leq C, \label{eq:lteaux4}
\end{align}
where we used the fact that $t<\tau$ is finite and because of the assumption \eqref{eq:lteaux1}, we have the bounds \eqref{eq:shiestconc} and hence the integrand is bounded.

\noindent
Since $\del \left((-\Ric + 3T^tT-|T|^2g)\diamond \g2 + \Div T\lrcorner \psi\right)$ is bounded because of \eqref{eq:shiestconc}, using \eqref{eq:lteaux4}, we get that 
\begin{align*}
\left|\ptt \overline{\del} \g2\right|_{g(0)}\leq C,
\end{align*}
and hence
\begin{align}
|\overline{\del} \g2|_{g(0)}\leq |\overline{\del} \g2(0)|_{g(0)}+\int_{0}^{t}\left| \frac{\pt}{\pt  s}\overline{\del} \g2(s)\right|_{g(0)}ds |\overline{\del} \g2(0)|_{g(0)}+C\tau \leq C_1,
\end{align}
where we again used that  $\tau < \infty$. This proves the claim for $m=1$. 

We note that following the same reasoning which is used to obtain \eqref{eq:lteaux4} and the bounds \eqref{eq:shiestconc} from the Shi-type estimates, we get
\begin{align*}
|\pt_t \overline{\del} ^kA(t)|_{g(0)} = |\overline{\del}^k \pt_t A(t)|_{g(0)} = |\overline{\del}^k(g^{-1}(\Ric + T*T))|_{g(0)}
\end{align*}
and hence by the induction hypothesis, we have
\begin{align}\label{eq:lteaux5}
|\overline{\del}^k A(t)|_{g(0)}\leq C\ \ \ \text{for}\ 0\leq k\leq m-1.
\end{align}

We now prove the induction step. We have
\begin{align}
\left|\ptt \overline{\del}^m \g2\right|_{g(0)} &=|\overline{\del}^m\left((-\Ric + 3T^tT-|T|^2g)\diamond \g2 + \Div T\lrcorner \psi\right)|_{g(0)} \nonumber \\
&\leq C \sum_{i=0}^{m}|A|^i|\del^{m-i}\left((-\Ric + 3T^tT-|T|^2g)\diamond \g2 + \Div T\lrcorner \psi\right)| \nonumber \\
& \quad +C\sum_{i=1}^{m-1}|\overline{\del}^iA||\del^{m-1-i}\left((-\Ric + 3T^tT-|T|^2g)\diamond \g2 + \Div T\lrcorner \psi\right)|. \label{eq:lteaux6}
\end{align}
The  terms in the first quantity on the right hand side of \eqref{eq:lteaux6} are bounded due to the compactness of $M$ and the Shi-type estimates \eqref{eq:shiestconc}. The terms in the second quantity are bounded due to \eqref{eq:lteaux5} and the Shi-type estimates. Thus we get,
\begin{align}
\left|\ptt \overline{\del}^m\g2\right|_{g(0)}\leq C. \label{eq:lteaux7}
\end{align}
The Claim~\ref{lteclaim} follows from integrating \eqref{eq:lteaux7} in time and using the fact that $\tau<\infty$.

\medskip

We now continue the proof of Theorem~\ref{thm:lte1}. We have the continuous limit $\G2$-structure $\g2(\tau)$. Let $U$ be the domain of a fixed local coordinate chart. We have
\begin{align}
\g2_{ijk}(\tau)=\g2_{ijk}(t) + \int_{t}^{\tau} \left((-\Ric(s) + 3T^tT(s)-|T|^2g(s))\diamond \g2(s) + \Div T(s)\lrcorner \psi (s)\right)_{ijk}ds. \label{eq:lteaux8}
\end{align}
Let $\alpha=\{a_1, \ldots a_r\}$ be any multi-index with $|\alpha|=a_1+a_2+\cdots + a_r=m\in \mathbb{N}$. We know from Claim~\ref{lteclaim} and~\eqref{eq:lteaux7} that
\begin{equation*}
	\frac{\pt^m}{\pt x^{\alpha}}\g2_{ijk} \qquad \text{and} \qquad \frac{\pt^m}{\pt x^{\alpha}}\left((-\Ric + 3T^tT-|T|^2g)\diamond \g2 + \Div T\lrcorner \psi \right)_{ijk}
\end{equation*}
are uniformly bounded on $U\times [0, \tau)$. So from~\eqref{eq:lteaux8} we have that $\frac{\pt^m}{\pt x^{\alpha}}\g2_{ijk}(\tau)$ is bounded on $U$ and hence $\g2(\tau)$ is a smooth $\G2$-structure. Moreover, from~\eqref{eq:lteaux8} we have
\begin{equation*}
	\Big | \frac{\pt^m}{\pt x^{\alpha}}\g2_{ijk}(\tau)-\frac{\pt^m}{\pt x^{\alpha}}\g2_{ijk}(t) \Big | \leq C(\tau-t)
\end{equation*}
and thus $\g2(t)\rightarrow \g2(\tau)$ uniformly in any $C^m$ norm as $t\rightarrow \tau$, for $m\geq 2$.

Now we use the short-time existence and uniqueness result to obtain a contradiction. Since $\g2(\tau)$ is smooth, we use $\bar{\g2}(0)=\g2(\tau)$ as our initial condition in Theorem~\ref{thm:RHFste} to obtain a solution $\bar{\g2}(t)$ of the Ricci-harmonic flow for a short time $0\leq t < \varepsilon$. Since $\g2(t)\rightarrow \g2(\tau)$ smoothly as $t\rightarrow \tau$, it follows that
\begin{equation*}
	\bar{\g2}(t) = 
	\begin{cases}
		\g2(t) & 0\leq t < \tau \\
		\bar{\g2}(t-\tau) & \tau \leq t < \tau + \varepsilon
	\end{cases}
\end{equation*}
is a solution of the Ricci-harmonic flow which is smooth and satisfies $\bar{\g2}(0)=\g2(0)$. This contradicts the maximality of $\tau$ and the contradiction is arising because of our assumption \eqref{eq:lteaux1}. Thus we indeed have
\begin{equation} \label{eq:lteaux9}
	\lim_{t\nearrow \tau} \Lambda(t)=\infty,
\end{equation}
and thus, if $\lim_{t\nearrow \tau}  \Lambda(t)$ exists, it must be $\infty$.

Next we show that in fact~\eqref{eq:lte1} is true. Suppose not. Then there exists $K_0< \infty$ and a sequence of times $t_i \nearrow \tau$ such that $\Lambda(t_i) \leq K_0$. By the doubling time estimate in Lemma~\ref{lem:dte}, we get that
\begin{equation*}
	\Lambda(t)\leq 2\Lambda(t_i)\leq 2K_0
\end{equation*}
for all times $t\in [t_i, \min\{ \tau, t_i+\frac{1}{C K_0} \}]$. Since $t_i\nearrow \tau$ as $i\rightarrow \infty$, there exists $i_0$ large enough such that $t_{i_0}+\frac{C}{K_0} \geq \tau$. (Here, as before, we crucially use the fact that $\tau$ is assumed to be finite.) But this implies that
\begin{equation*}
	\sup_{M\times [t_{i_0}, \tau]} \Lambda(x,t) \leq 2K_0
\end{equation*} 
which cannot happen as we have already shown above that this leads to a contradiction to the maximality of $\tau$. This completes the proof of~\eqref{eq:lte1}.

We complete the proof of the theorem by computing the lower bound of the blow-up rate~\eqref{eq:Lambdablowuprate}. We apply the maximum principle to~\eqref{eq:dteaux5} to get
\begin{equation*}
	\frac{d}{dt} \Lambda(t)^2 \leq C\Lambda(t)^3
\end{equation*}
which implies that
\begin{equation} \label{eq:lteaux10}
	\frac{d}{dt} \Lambda(t)^{-1} \geq -\frac C2.
\end{equation}
Since we proved above that $\lim_{t\rightarrow \tau} \Lambda(t)=\infty$, we have
\begin{equation*}
	\lim_{t\rightarrow \tau} \Lambda(t)^{-1} =0.
\end{equation*}
Integrating~\eqref{eq:lteaux10} from $t$ to $t_0\in (t, \tau)$, we have
\begin{align*}
\Lambda(t_0)^{-1}-\Lambda(t)^{-1}\geq -\frac C2(t_0-t)
\end{align*}
and hence, taking the limit as $t_0\rightarrow \tau$ gives
\begin{equation*}
	\Lambda(t)\geq \cfrac{2}{{C(\tau -t)}}.
\end{equation*}
This completes the proof of Theorem~\ref{thm:lte1}.
\end{proof}


Since we obtained the blow-up rate of $\Lambda(t)$ in \eqref{eq:Lambdablowuprate}, we can make the following definitions of various type of singularities of \eqref{heatfloweqn}, just like the Ricci flow case.

\begin{definition}\label{def:sing.types}
Suppose that $(M^7,\varphi(t))$ is a solution of the Ricci-harmonic flow of $\G2$-structures on a closed manifold on a maximal time interval $[0,\tau)$ and let $\Lambda(t)$ be as in \eqref{eq:Lambdadefn}.
	
	\medskip
	
\noindent	
If we have a finite-time singularity, i.e.~$\tau<\infty$, we say that the solution forms  
	\begin{itemize}
		\item a \emph{Type I singularity} (rapidly forming) if $\sup_{t\in[0,\tau)}(\tau-t)\Lambda(t) <\infty$; and otherwise 
		\item a \emph{Type IIa singularity} (slowly forming) if $\sup_{t\in [0,\tau)}(\tau-t)\Lambda(t)=\infty$.
	\end{itemize}
	
\noindent	
If we have an \emph{infinite-time} singularity, where $\tau=\infty$, then it is 
	\begin{itemize}
		\item a \emph{Type IIb singularity} (slowly forming) if $\sup_{t\in  [0,\infty)}t\Lambda(t)=\infty$; and otherwise  
		
		\item a \emph{Type III singularity} (rapidly forming) if $\sup_{t\in  [0,\infty)}t\Lambda(t)<\infty$.
	\end{itemize}
\end{definition}


Since Type I singularities play a crucial role in understanding the behaviour of the singularities of the Ricci flow, it would be interesting to understand the long time behaviour of Type I Ricci-harmonic flows and the structure of Type I singularities. 

\subsection{Compactness}

In this section, we prove a Cheeger--Gromov--Hamilton type compactness theorem for solutions of the Ricci-harmonic flow. 

Recall that a sequence $(M^7_i, \g2_i, p_i), p_i\in M_i$ of complete, pointed $7$-manifolds with $\G2$-structures is said to converge to $(M^7, \g2, p)$ with $p\in M$ and $\g2$ a $\G2$-structure, if there exists a sequence of compact subsets $\Omega_i\subset M$ exhausting $M$ with $p\in \text{int}(\Omega_i)$, for all $i$ and a sequence of diffeomorphisms $F_i:\Omega_i\rightarrow F_i(\Omega_i)\subset M_i$ with $F(p)=p_i$ such that for every $\varepsilon >0$, there exists $k_0=k_0(\varepsilon)$ such that for all $k\geq k_0$,
\begin{align}\label{eq:cmpstrdefn}
\underset{0\leq \alpha \leq p}{\text{sup}}\ \underset{x\in K}{\text{sup}}\ |\del^{\alpha}(F_{i}^*\g2_i-\g2)|_{g} < \varepsilon,
\end{align}
for all $p\in \mathbb{N}$ and for any compact subset $K$ of $M$. Here $g$ is any Riemannian metric on $M$ and $\del$ is its Levi-Civita connection.

We also recall the following very general compactness theorem for $\G2$-structures proved by Lotay--Wei  in~\cite[Theorem 7.1]{lotay-wei-gafa}.

\begin{theorem} \label{thm:strccompactnessthm}
	Let $M_i$ be a sequence of smooth $7$-manifolds and for each $i$ we let $p_i\in M_i$ and $\g2_i$ be a $\G2$-structure on $M_i$ such that the metric $g_i$ on $M_i$ induced by $\g2_{i}$ is complete on $M_i$. Suppose that 
	\begin{equation} \label{strccompactnessthmeqn}
		\sup_i \sup_{x\in M_i} \big (|\del_{g_i}^{k+1}T_{i}(x)|^2_{g_i}+|\del^k_{g_i} \Rm_{g_i}(x)|^2_{g_i} \big )^{\frac 12} < \infty
	\end{equation}
	for all $k\geq 0$ and
	\begin{equation*}
		\inf_i \inj (M_i,g_i,p_i)>0,
	\end{equation*}
	where $T_i$, $\Rm_{g_i}$ are the torsion and the Riemann curvature tensor of $\g2_i$ and $g_i$ respectively and $\inj (M_i, g_i, p_i)$ denotes the injectivity radius of $(M_i,g_i)$ at $p_i$.
	
	Then there exists a $7$-manifold $M$, a $\G2$-structure $\g2$ on $M$ and a point $p\in M$ such that, after passing to a subsequence, we have 
	\begin{equation*}
		(M_i, \g2_i, p_i) \rightarrow (M, \g2, p) \qquad \text{as } i\rightarrow \infty.
	\end{equation*} 
\end{theorem}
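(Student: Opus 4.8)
The plan is to reduce the statement to the classical Cheeger--Gromov--Hamilton compactness theorem for pointed Riemannian manifolds and then to promote the resulting limit metric to a $\G2$-structure using convergence of the underlying $3$-forms. First I would observe that hypothesis~\eqref{strccompactnessthmeqn} gives, for every $k\ge 0$, a uniform bound $\sup_i\sup_{M_i}|\del_{g_i}^k\Rm_{g_i}|_{g_i}<\infty$; combined with the uniform injectivity radius lower bound $\inf_i\inj(M_i,g_i,p_i)>0$ and completeness of the $g_i$, Hamilton's compactness theorem yields, after passing to a subsequence, a complete pointed Riemannian manifold $(M,g,p)$, an exhaustion of $M$ by open sets $\Omega_i$ with compact closure and $p\in\Omega_i$, and diffeomorphisms $F_i:\Omega_i\to F_i(\Omega_i)\subset M_i$ with $F_i(p)=p_i$ such that $F_i^*g_i\to g$ in $C^\infty_{\mathrm{loc}}(M)$.

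The second step is to obtain uniform $C^\infty$ bounds on the pulled-back $3$-forms $\td\g2_i:=F_i^*\g2_i$. The key algebraic fact is that, since $\del\g2=T\ast\psi$ and $\del\psi=T\ast\g2$ by~\eqref{eq:delphi} and~\eqref{eq:delpsi}, a straightforward induction shows that $\del^k\g2$ is a universal metric-contraction polynomial in $\g2$, $\psi$ and $\{\del^jT\}_{0\le j\le k-1}$ with coefficients depending only on $k$; in particular no curvature terms enter. Hence~\eqref{strccompactnessthmeqn}, which controls $T_i$ together with all of its covariant derivatives uniformly, gives $\sup_i\sup_{M_i}|\del^k_{g_i}\g2_i|_{g_i}<\infty$ for every $k$, and since $F_i$ is an isometry onto its image the same bounds hold for $|\td\del^k_{F_i^*g_i}\td\g2_i|_{F_i^*g_i}$. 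Because $F_i^*g_i\to g$ together with all derivatives in $C^\infty_{\mathrm{loc}}$, the difference tensors between the Levi-Civita connections of $F_i^*g_i$ and of $g$ converge to zero with all derivatives on compact sets, so these intrinsic bounds translate into: for every compact $K\subset M$ and every $k$, $\sup_i\sup_K|\del^k_g\td\g2_i|_g<\infty$ (for $i$ large enough that $K\subset\Omega_i$). An Arzel\`a--Ascoli argument on the compact exhaustion, together with the usual diagonal argument over $k$, then extracts a further subsequence with $\td\g2_i\to\g2$ in $C^\infty_{\mathrm{loc}}(M)$ for some smooth $3$-form $\g2$ on $M$.

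It remains to check that $\g2$ is a genuine $\G2$-structure inducing $g$. Positivity of a $3$-form is an open condition and, by~\eqref{eq:metricfromphi}, the induced-metric assignment $\sigma\mapsto g_\sigma$ is continuous on the cone of positive $3$-forms; since $g_{\td\g2_i}=F_i^*g_i$ for each $i$ and $\td\g2_i\to\g2$, $F_i^*g_i\to g$ uniformly (with all derivatives) on each compact set, the bilinear form assigned to $\g2$ via~\eqref{eq:metricfromphi} is positive definite and equals $g$ --- this is the same passage to the limit used in the proof of Theorem~\ref{thm:lte1} to see that the finite-time limit is a $\G2$-structure. Thus $\g2$ is a $\G2$-structure with $g_\g2=g$, and $F_i^*\g2_i=\td\g2_i\to\g2$ in $C^\infty_{\mathrm{loc}}$ is precisely the convergence~\eqref{eq:cmpstrdefn}, completing the argument.

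The main obstacle is the bookkeeping in the second step: one must verify the inductive identity for $\del^k\g2$ carefully (so that curvature genuinely does not appear), and, more importantly, transfer the torsion and curvature bounds --- which are stated with respect to the varying metrics $g_i$ --- into honest $C^k$ bounds with respect to the fixed limit metric $g$, which requires tracking the connection difference tensor between $F_i^*g_i$ and $g$ and all of its derivatives. This is standard inside Hamilton's compactness machinery but must be organized with some care; everything else is routine once the $C^\infty$ compactness of the metrics has been invoked.
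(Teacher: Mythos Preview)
The paper does not prove this theorem; it simply recalls it from Lotay--Wei \cite[Theorem~7.1]{lotay-wei-gafa} as a known input for the subsequent flow-compactness result. Your outline is correct and is essentially the Lotay--Wei argument: invoke Hamilton's compactness for the underlying metrics, use the inductive identity expressing $\del^k\g2$ as a contraction polynomial in $\g2,\psi$ and $\{\del^jT\}_{j<k}$ to get uniform $C^\infty$ bounds on the pulled-back $3$-forms, apply Arzel\`a--Ascoli, and check positivity of the limit via~\eqref{eq:metricfromphi}.

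One minor caveat: as literally written, hypothesis~\eqref{strccompactnessthmeqn} bounds $\del^{k+1}T$ for $k\ge 0$, i.e.\ $\del T,\del^2T,\ldots$, but not $|T|$ itself, so your assertion that it ``controls $T_i$ together with all of its covariant derivatives'' needs a word of justification. In Lotay--Wei's original closed-$\G2$ setting $|T|^2$ is controlled by the scalar curvature and hence by $|\Rm|$; in the general setting one should read the hypothesis as also including a bound on $|T|$, as the paper itself does in~\eqref{eq:cmpctthm1}.
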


Following the same ideas as in the Ricci flow case by Hamilton \cite{hamilton-compactness} and for the Laplacian flow of closed $\G2$-structures by Lotay--Wei \cite[Thm. 7.2]{lotay-wei-gafa}, we prove the following compactness theorem for the solutions of the Ricci-harmonic flow.

\begin{theorem}\label{thm:cmptthmflows}
Let $M_i$ be a sequence of compact $7$-manifolds and let $p_i\in M_i$ for each $i$. Let $\g2_i(t)$ be a sequence of solutions to the Ricci-harmonic flow~\eqref{heatfloweqn} for $\G2$-structures on $M_i$ for $t\in (a,b)$, where $-\infty \leq a<0<b\leq \infty$. Suppose that
\begin{equation} \label{eq:cmpctthm1}
	\sup_i \sup_{x\in M_i, t\in (a,b)} \left(|\del_{g_i(t)}T_{i}(x,t)|^2_{g_i(t)}+|\Rm_{g_i(t)}(x,t)|^2_{g_i(t)}+|T_i(x,t)|^4_{g_i(t)}\right)^{\frac 12} < \infty
\end{equation}
where $\Rm_i(t)$ and $T_i(t)$ denote the Riemann curvature tensor and the torsion of $\g2_i(t)$ respectively, and the injectivity radius satisfies
\begin{equation} \label{eq:cmpctthm2}
	\inf_i \inj(M_i, g_i(0), p_i)>0.
\end{equation}

Then there exists a $7$-manifold $M$, a point $p\in M$ and a solution $\g2(t)$ of the flow~\eqref{heatfloweqn} on $M$ for $t\in (a,b)$ such that, after passing to a subsequence, 
\begin{equation*}
	(M_i, \g2_i(t), p_i) \rightarrow (M, \g2(t), p) \qquad \text{as }\  i\rightarrow \infty.
\end{equation*}
\end{theorem}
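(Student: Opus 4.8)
\emph{Strategy.} The plan is to adapt Hamilton's compactness theorem for solutions of the Ricci flow \cite{hamilton-compactness} to the Ricci-harmonic flow, along the lines of Lotay--Wei's treatment of the Laplacian flow \cite[Thm.~7.2]{lotay-wei-gafa}: first extract a $C^\infty_{\mathrm{loc}}$-limit of the time-zero slices using the static compactness Theorem~\ref{thm:strccompactnessthm}, then upgrade it to convergence of the full families of flows via the Shi-type estimates and the flow equation. For the first step I need uniform higher-order bounds. By \eqref{eq:cmpctthm1} there is $K<\infty$, independent of $i$, with $\Lambda_i\le K$ on $M_i\times(a,b)$. Fixing a compact subinterval $[\alpha,\beta]\subset(a,b)$ and some $\alpha'\in(a,\alpha)$, one applies Theorem~\ref{thm:shiest} on consecutive time intervals of length $1/K$ starting at $\alpha'$ and iterates finitely many times to cover $[\alpha,\beta]$; this yields, for each $k\ge0$, a constant $C_k=C_k(K,\alpha-\alpha',k)$ with
\begin{align}\label{eq:cmpct-derbd}
\sup_i\ \sup_{M_i\times[\alpha,\beta]}\big(|\del_{g_i(t)}^{k}\Rm_{g_i(t)}|+|\del_{g_i(t)}^{k+1}T_i|\big)\le C_k ,
\end{align}
and, choosing $[\alpha,\beta]\ni 0$, uniform bounds $|\del^k\Rm_i(0)|+|\del^{k+1}T_i(0)|\le C_k$ for every $k$.

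\emph{Time-zero limit and convergence of the flows.} The bounds at $t=0$ together with \eqref{eq:cmpctthm2} are precisely the hypotheses of Theorem~\ref{thm:strccompactnessthm} for $(M_i,\g2_i(0),p_i)$, so after passing to a subsequence there are a $7$-manifold $M$, a point $p\in M$, a $\G2$-structure $\g2_\infty$ on $M$, a compact exhaustion $\{\Omega_i\}$ of $M$ with $p\in\mathrm{int}(\Omega_i)$, and diffeomorphisms $F_i:\Omega_i\to F_i(\Omega_i)\subset M_i$ with $F_i(p)=p_i$ and $F_i^*\g2_i(0)\to\g2_\infty$ in $C^\infty_{\mathrm{loc}}(M)$. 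Setting $\widehat{\g2}_i(t):=F_i^*\g2_i(t)$ on $\Omega_i\times(a,b)$, each $\widehat{\g2}_i$ again solves \eqref{heatfloweqn} (the flow is diffeomorphism-equivariant), with $\widehat{\g2}_i(0)\to\g2_\infty$, and the estimates \eqref{eq:cmpct-derbd} are diffeomorphism invariant, hence transfer to $\widehat{\g2}_i$. Now fix the background metric $g_\infty:=g_{\g2_\infty}$ with connection $\overline{\del}$ and put $A_i:=\overline{\del}-\del_{g_i(t)}$; the argument of Claim~\ref{lteclaim} in the proof of Theorem~\ref{thm:lte1} applies almost verbatim ($\partial_t A_i=g_i(t)^{-1}*\overline{\del}(\Ric+T*T)$, with the $g_i(t)$ uniformly equivalent by $|\partial_t g_i|\le CK$), and integrating in time using \eqref{eq:cmpct-derbd} and the $C^\infty_{\mathrm{loc}}$ control at $t=0$ produces uniform $C^\infty_{\mathrm{loc}}$ bounds for $\widehat{g}_i(t)$ and $\widehat{\g2}_i(t)$ on $M\times(a,b)$; differentiating \eqref{heatfloweqn} and using the evolution equations of $\Rm$ and $T$ then bounds $\partial_t^{\ell}\overline{\del}^{m}\widehat{\g2}_i$ uniformly on compact subsets of $M\times(a,b)$ for all $\ell,m$.

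\emph{Passage to the limit.} By Arzel\`a--Ascoli applied to these space-time $C^\infty$ bounds, a further subsequence of $\widehat{\g2}_i$ converges in $C^\infty_{\mathrm{loc}}(M\times(a,b))$ to a smooth family $\g2(t)$ with $\g2(0)=\g2_\infty$. As in the proof of Theorem~\ref{thm:lte1}, nondegeneracy of $\g2(t)$ for each $t$ follows from the identity $g_t(u,v)\vol_{\g2(t)}=-\tfrac16(u\lrcorner\g2(t))\wedge(v\lrcorner\g2(t))\wedge\g2(t)$: the right-hand side is a limit of positive-definite bilinear forms, so each $\g2(t)$ is a genuine $\G2$-structure. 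Since every term on the right-hand side of \eqref{heatfloweqn} is a universal smooth function of $\g2$ and finitely many of its derivatives, the equation passes to the limit, so $\g2(t)$ solves the Ricci-harmonic flow on $M\times(a,b)$; together with $F_i(p)=p_i$ this yields $(M_i,\g2_i(t),p_i)\to(M,\g2(t),p)$ in the sense of \eqref{eq:cmpstrdefn}.

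\emph{Main obstacle.} The genuinely analytic point is the convergence of the flows: one must ensure the pulled-back metrics $\widehat{g}_i(t)$ do not degenerate and remain uniformly controlled in $C^\infty_{\mathrm{loc}}$ over the whole interval $(a,b)$, not merely at $t=0$. This is exactly what the ``$A=\overline{\del}-\del$'' device together with time-integration accomplishes in Claim~\ref{lteclaim}; the only new feature relative to the Ricci flow is the presence of the torsion terms $3T^tT-|T|^2g$ and $\Div T\lrcorner\psi$ in \eqref{heatfloweqn}, whose derivatives are bounded by \eqref{eq:cmpct-derbd}. The remaining ingredients --- the static $\G2$-compactness Theorem~\ref{thm:strccompactnessthm} and the Arzel\`a--Ascoli extraction --- are as in the Ricci flow case, so no essentially new difficulty arises.
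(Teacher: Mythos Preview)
Your proposal is correct and follows essentially the same approach as the paper: apply the Shi-type estimates to upgrade the $\Lambda$-bound to uniform higher-order bounds, invoke the static $\G2$-compactness theorem at $t=0$ using the injectivity radius assumption, pull back via the resulting diffeomorphisms, use the $A=\overline{\del}-\del$ argument from Claim~\ref{lteclaim} to obtain uniform space-time $C^\infty_{\mathrm{loc}}$ bounds, and conclude with Arzel\`a--Ascoli. You have in fact supplied more detail than the paper's own sketch (the iteration of Shi estimates over subintervals and the explicit check of nondegeneracy of the limit), but the strategy is identical.
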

\begin{proof}
Since the proof is similar to the proof of compactness theorem for other flows of $\G2$-structures, we only sketch the main ideas. From the Shi-type estimates in Theorem~\ref{thm:shiest} and the assumption \eqref{eq:cmpctthm1}, we get
\begin{align}\label{eq:cptthmaux1}
|\del^k_{g_i(t)} \Rm_i(x,t)|_{g_i(t)} + |\del^{k+1}_{g_i(t)} T_i(x,t)|_{g_i(t)} \leq C_k.
\end{align}
Using the assumption \eqref{eq:cmpctthm2} on the injectivity radius, we can apply Theorem~\ref{thm:strccompactnessthm} to extract a subsequence of $(M_i, \g2_i(0), p_i)$ which converges to a complete limit $(M, \tilde{\g2}_{\infty}(0), p)$ where $p\in M$ and $\tilde{\g2}_{\infty}(0)$ is a $\G2$-structure on $M$. We use the diffeomorphisms $F_i$ obtained from applying Theorem~\ref{thm:strccompactnessthm} above to define 
\begin{align*}
\tilde{\varphi}_i(t)=F_i^*\g2_i(t). 
\end{align*}

Using the Claim~\ref{lteclaim} and in fact, essentially the same arguments in the proof of Theorem~\ref{thm:lte1}, we get that for any compact subset $\Omega\times [c,d] \subset M\times (a,b)$, there exists constants $C_{k,l}$ such that
\begin{align}
\underset{\Omega\times [c,d]}{\text{sup}}\ \left( \left|\frac{\pt^l}{\pt t^l} \del^k_{\tilde{g}_{\infty}(0)}\tilde{g}_i(t)\right|_{\tilde{g}_{\infty}(0)} +\left|\frac{\pt^l}{\pt t^l} \del^k_{\tilde{g}_{\infty}(0)}\tilde{\g2}_i(t)\right|_{\tilde{g}_{\infty}(0)}    \right) \leq C_{k,l}.
\end{align}

Thus, using the Arzelá-Ascoli theorem and a standard diagonalization argument, we get that there exists a subsequence of $\tilde{\g2}_i(t)$ that converges smoothly on any compact subset of $M\times (a,b)$ to a solution $\tilde{\g2}_{\infty}(t)$ of the Ricci-harmonic flow.
\end{proof}

One of the main applications of a compactness theorem for solution of any geometric flow is in singularity analysis. More precisely, suppose $M^7$ is a compact manifold and let $\g2(t)$ be a solution to the Ricci-harmonic flow on a maximal time interval $[0, \tau)$ with $\tau<\infty$. Theorem~\ref{thm:lte1} then implies that $\Lambda(t)$ defined in~\eqref{eq:Lambdadefn} satisfies $\lim_{t \nearrow \tau} \Lambda(t)=\infty$. Choose a sequence of points $(x_i, t_i)$ with $t_i\nearrow \tau$ and
\begin{equation*}
	\Lambda(x_i,t_i) = \sup_{x\in M,\ t\in [0,t_i]} \left(|\del T(x,t)|^2_{g(t)}+|\Rm(x,t)|^2_{g(t)}+|T(x,t)|^4_{g(t)}\right)^{\frac{1}{2}}.
\end{equation*}

Now consider a sequence of parabolic dilations of the Ricci-harmonic flow 
\begin{equation}
	\g2_i(t) = \Lambda(x_i, t_i)^3\g2(t_i+\Lambda(x_i,t_i)^{-2}t)
\end{equation}
and define
\begin{equation}
	\Lambda_{\g2_i}(x,t)=  \left(|\del T_i(x,t)|^2_{g_i(t)}+|\Rm_i(x,t)|^2_{g_i(t)}+|T_i(x,t)|^4_{g_i(t)}\right)^{\frac{1}{2}}.
\end{equation}
If $\widetilde{\g2} = c^3\g2$ then we have,
\begin{equation*}
	(\widetilde{\Ric}+3\tilde{T}^t\tilde{T}-|\tilde{T}|^2_{\tilde{g}}\tilde{g})\ \tilde{\diamond}\  \tilde{\g2}+ \widetilde{\Div} \tilde{T} \lrcorner \tilde{\psi} = c \left((\Ric+3T^tT-|T|^2_{g}g)\ \diamond\  \g2+ \Div  T \lrcorner \psi\right).
\end{equation*}
Hence, for each $i$, we have that $(M, \g2_i(t))$ is a solution of the Ricci-harmonic flow~\eqref{heatfloweqn} on the time interval $[-t_i\Lambda(x_i,t_i), (\tau-t_i)\Lambda(x_i,t_i)$. Note that for each $i$ and for all $t\leq 0$ we have
\begin{equation*}
	\sup_{M} |\Lambda_{\g2_i}(x,t)| = \frac{\left(|\del T_i(x,t)|^2_{g_i(t)}+|\Rm_i(x,t)|^2_{g_i(t)}+|T_i(x,t)|^4_{g_i(t)}\right)^{\frac{1}{2}}}{\Lambda(x_i,t_i)} \leq 1
\end{equation*}
by the definition of $\Lambda(x_i,t_i)$. Thus, there exists a uniform $b>0$ such that
\begin{equation*}
	\sup_{i} \sup_{M\times (a,b)} |\Lambda_{\g2_i}(x,t)| \leq 2
\end{equation*}
for any $a<0$. Thus, if we have $\inf_{i} \inj (M, g_i(0), x_i)>0$, then using the compactness Theorem~\ref{thm:cmptthmflows}, we can extract a subsequence of $(M, \g2_i(t), x_i)$ that converges to a solution $(M_{\infty}, \g2_{\infty}(t), x_{\infty})$ of the Ricci-harmonic flow.  

In fact, using our modified Shi-type estimates Theorem~\ref{thm:strlocder} we can prove the following modified compactness theorem for the Ricci-harmonic flow whose proof we skip as it is similar to the proof of Theorem~\ref{thm:cmptthmflows}.

\begin{theorem}\label{thm:modcomp}
Let $M_i$ be a sequence of compact $7$-manifolds and let $p_i\in M_i$ for each $i$. Let $\g2_i(t)$ be a sequence of solutions to the Ricci-harmonic flow~\eqref{heatfloweqn} for $\G2$-structures on $M_i$ for $t\in (a,b)$, where $-\infty \leq a<0<b\leq \infty$. Suppose that
\begin{enumerate}
	\item (derivative bounds for the initial $\G2$-structure) for some $m_0\in \mathbb{N}\cup \{\infty\}$ we have
	\begin{align}
	\underset{k\in \mathbb{N}}{\text{sup}} \ \left(\underset{x\in M_k}{\text{sup}}\ \left(|\del^m \Rm_{\g2_k(0)}(x)|+ |\del^{m+1}T_{\g2_k(0)}(x)|\right)\right) < \infty
	\end{align}
	for $0\leq m \leq m_0+1$.
	
	\item (space-time $\Lambda$-bound) we have
	\begin{equation} \label{eq:strcmpctthm1}
		\sup_i \sup_{x\in M_i, t\in (a,b)} \left(|\del_{g_i(t)}T_{i}(x,t)|^2_{g_i(t)}+|\Rm_{g_i(t)}(x,t)|^2_{g_i(t)}+|T_i(x,t)|^4_{g_i(t)}\right)^{\frac 12} < \infty.
	\end{equation}
	
	\item (injectivity radius satisfies) $\inf_i \inj(M_i, g_i(0), p_i)>0$.
	\end{enumerate}
Then there exists a $7$-manifold $M$, a point $p\in M$ and a solution $\g2(t)$ of the flow~\eqref{heatfloweqn} on $M$ for $t\in (a,b)$ such that, after passing to a subsequence, 
\begin{equation*}
	(M_i, \g2_i(t), p_i) \rightarrow (M, \g2(t), p) \qquad \text{as }\  i\rightarrow \infty.
\end{equation*}	
in the pointed $C^{m_0}$-Cheeger-Gromov topology. \qed
\end{theorem}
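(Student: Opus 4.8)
The plan is to follow the proof of Theorem~\ref{thm:cmptthmflows} essentially verbatim, replacing the global Shi-type estimates of Theorem~\ref{thm:shiest} by the modified local Shi-type estimates of Theorem~\ref{thm:strlocder}; this is exactly the ingredient that upgrades ``smooth convergence on compact subsets'' to ``convergence in the pointed $C^{m_0}$-Cheeger--Gromov topology'' without the derivative bounds degenerating at the initial time. First I would establish uniform spatial derivative bounds. Fix a compact time subinterval of $(a,b)$ and, after the usual time translation, regard the initial data as placed at $t=0$. Combining hypothesis~(1) (bounds of order $\le m_0+1$ on $\Rm_{\g2_k(0)}$ and $T_{\g2_k(0)}$) with hypothesis~(2) (which in particular gives $|\Rm_i|+|\del T_i|+|T_i|^2\le K$ uniformly in $i$), apply Theorem~\ref{thm:strlocder} with $l=m_0+1$ and with $r$ arbitrarily large: for every $m\le m_0+1$ one gets a constant $C_m$ independent of $i$ with $|\del^m\Rm_i(x,t)|+|\del^{m+1}T_i(x,t)|\le C_m$ on $M_i\times[0,b')$ for each $b'<b$, and for $m>m_0+1$ one still gets bounds of the form $C_m t^{-(m-m_0-1)/2}$, hence bounds locally uniform away from $t=0$. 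On the part of $(a,b)$ with $t<0$ one falls back on Theorem~\ref{thm:shiest} applied with a slightly earlier base time, obtaining the (less sharp, but sufficient) bounds on compact subintervals.

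Second, I would promote these into space-time bounds including time derivatives, exactly as in the proof of Claim~\ref{lteclaim} and Theorem~\ref{thm:lte1}. From the flow equation \eqref{heatfloweqn}, $\pt_t\g2_i$ is a first-order polynomial expression in $\Ric_i$, $T_i$ and their derivatives; differentiating and feeding in the evolution equations for $\Rm$, $T$ and $\del T$ derived in \textsection~\ref{sec:evoleqns} controls $\pt_t^{\,j}\del^k\g2_i$ and $\pt_t^{\,j}\del^k g_i$ by the spatial bounds of the first step. Re-expressing everything in terms of the fixed Levi-Civita connection $\bar\del_i$ of $g_i(0)$ — controlling the difference tensor $A_i=\bar\del_i-\del_i$ and its derivatives along the way, just as in \eqref{eq:lteaux4}--\eqref{eq:lteaux7} — yields uniform bounds on $|\pt_t^{\,j}\bar\del^k g_i(t)|_{g_i(0)}$ and $|\pt_t^{\,j}\bar\del^k\g2_i(t)|_{g_i(0)}$ for $k+2j$ up to roughly $m_0$, again without degeneration as $t\to 0^+$.

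Third, I would extract the limit. Using hypothesis~(3) together with the order-$\le m_0+1$ bounds at $t=0$, apply the finite-regularity analogue of Theorem~\ref{thm:strccompactnessthm} (whose proof localizes to give convergence in $C^{m_0+1}$ under finitely many derivative bounds) to obtain diffeomorphisms $F_i:\Omega_i\to F_i(\Omega_i)\subset M_i$, with $\Omega_i$ exhausting a limit manifold $M$, a base point $p\in M$ with $F_i(p)=p_i$, and $F_i^*\g2_i(0)\to\bar\g2_\infty$ in the pointed $C^{m_0+1}$ topology. Since the $F_i$ are uniformly controlled, the space-time bounds of the second step transfer to $\tilde\g2_i(t)=F_i^*\g2_i(t)$ and $\tilde g_i(t)=F_i^*g_i(t)$, and Arzel\`a--Ascoli together with a diagonal argument produce a subsequence with $\tilde\g2_i(t)\to\tilde\g2_\infty(t)$ in $C^{m_0}$ on compact subsets of $M\times(a,b)$, uniformly in $t$ down to $t=0$. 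For each $t$ the limit $\tilde\g2_\infty(t)$ is again a $\G2$-structure, since positivity is detected by the induced metric being positive definite, an open condition stable under the limit (cf.\ \eqref{eq:metricfromphi}); and passing to the limit in \eqref{heatfloweqn} — legitimate because enough derivatives converge — shows $\tilde\g2_\infty(t)$ solves the Ricci-harmonic flow with $\tilde\g2_\infty(0)=\bar\g2_\infty$. This is the asserted pointed $C^{m_0}$-Cheeger--Gromov convergence.

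The main obstacle I anticipate is the regularity bookkeeping: verifying that the $m_0+1$ initial-data bounds, combined with the modified Shi estimates and with the conversion of spatial to space-time derivatives, supply precisely enough derivatives both to run the Cheeger--Gromov extraction in $C^{m_0}$ and to pass to the limit in the second-order nonlinear equation \eqref{heatfloweqn} so that the limit is a genuine solution (in particular one needs $m_0$ not too small, or else the limiting equation must be interpreted in a suitably weak sense). A related subtlety is that it is Theorem~\ref{thm:strlocder}, and not Theorem~\ref{thm:shiest}, that keeps the constants from blowing up as $t\to 0^+$, which is exactly what promotes the convergence to $C^{m_0}$-in-$t$ on $[0,b')$ rather than merely on compact subsets avoiding the initial time; for $t<0$ one only obtains the weaker compact-subset statement, and the formulation of the theorem should be read with that in mind.
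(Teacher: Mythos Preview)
Your proposal is correct and follows exactly the approach the paper indicates: the paper omits the proof, noting only that it is similar to that of Theorem~\ref{thm:cmptthmflows} with the modified local Shi-type estimates of Theorem~\ref{thm:strlocder} replacing the global ones. Your write-up supplies precisely those details, and the regularity bookkeeping you flag as a potential obstacle is indeed the only subtlety (which the paper itself does not elaborate on).
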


\subsection{Long time existence for the Ricci-harmonic flow}\label{subsec:ltevelocity}
We recall that a family of metrics $g(t)$, $t\in [0, \tau)$ is called {\bf{uniformly continuous}} if for any $\varepsilon >0$ there exists a $\delta >0$ such that for any $0\leq t_0<t<\tau$, if $t\-t_0\leq \delta$ then
\begin{align*}
|g(t)-g(t_0)|\leq \varepsilon,
\end{align*}
which implies
\begin{align}\label{eq:unicontmet}
(1-\varepsilon)g(t_0)\leq g(t)\leq (1+\varepsilon)g(t_0).	
\end{align}
Clearly, \eqref{eq:unicontmet} implies that for points $x, y\in M$ if $t-t_0\leq \delta$ then the distance function satisfies
\begin{align*}
(1-\varepsilon)^{\frac 12}d_{g(t_0)}(x,y)\leq d_{g(t)}(x,y)\leq (1+\varepsilon)^{\frac 12}d_{g(t_0)}(x,y), 
\end{align*}
and hence the geodesic balls centred at $x$ with radius $r$ satisfy
\begin{align*}
B_{g(t_0)}\left(x, \frac{r}{(1+\varepsilon)^{\frac 12}}\right) \subset B_{g(t)}(x,r)
\end{align*}
with their volumes satisfying
\begin{align}\label{eq:volbounds}
\Vol_{g(t_0)}\left(B_{g(t_0)}\left(x, \frac{r}{(1+\varepsilon)^{\frac 12}}\right) \right)\leq \Vol_{g(t)}\left(B_{g(t)}(x,r)\right).
\end{align}

In this section, we improve our long time existence result Theorem~\ref{thm:lte1} by proving that the Ricci-harmonic flow starting with an arbitrary $\G2$-structure on a compact manifold exists as long as the velocity of the flow remains bounded. This will obtained as a corollary of the following theorem which proves that if the underlying metrics $g(t)$ are uniformly continuous along the Ricci-harmonic flow and if the scalar curvature, the torsion tensor and the divergence of the intrinsic torsion $\Vop T$ satisfy the boundedness assumption then the Ricci-harmonic flow exists for all time. We state and prove the theorem. The proof is based on similar results in the Ricci flow case by Šešum \cite{sesum} (we follow the proof from \cite[Theorem 6.40]{chow-lu-ni}) and the Laplacian flow of closed $\G2$-structures case by Lotay--Wei \cite[Theorem 8.1]{lotay-wei-gafa}. 

\begin{theorem}\label{thm:ltestrong}
Let $(M^7, \g2(t))$ be a Ricci-harmonic flow on a compact manifold for $t \in [0, \tau)$ with $\tau < \infty$. Let $g(t)$ be the underlying metric of $\g2(t)$. If $g(t)$ is uniformly continuous and we have the following bounds
\begin{align}\label{eq:ltestrhyp}
\underset{M\times [0,\tau)}{\text{sup}} R+4|T|^2+2 \Div (\Vop T) < \infty,
\end{align}
where $\Vop T$ is the intrinsic torsion $T_7$ identified with a vector field, then the solution $\g2(t)$ can be extended past $\tau$.
\end{theorem}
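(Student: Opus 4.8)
The plan is to argue by contradiction. Suppose the maximal solution $\g2(t)$ on $[0,\tau)$ cannot be extended past $\tau$. By the long-time existence criterion (Theorem~\ref{thm:lte1}) this forces $\lim_{t\nearrow\tau}\Lambda(t)=\infty$, so I would perform a blow-up at the curvature maxima exactly as in the construction of \textsection\ref{sec:ltecompact}: choose $(x_i,t_i)$ with $t_i\nearrow\tau$ and $\Lambda(x_i,t_i)=\sup_{M\times[0,t_i]}\Lambda=:Q_i\to\infty$, and form the parabolically rescaled Ricci-harmonic flows $\g2_i(t)$, which satisfy $\Lambda_{\g2_i}\le 1$ for $t\le 0$ and $\Lambda_{\g2_i}(x_i,0)=1$; the doubling-time estimate (Lemma~\ref{lem:dte}) provides a uniform $b>0$ with $\Lambda_{\g2_i}\le 2$ on $(a_i,b)$, where the intervals of definition exhaust $(-\infty,b)$. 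The global Shi estimates (Theorem~\ref{thm:shiest}) then give uniform bounds on all derivatives of $\Rm_{\g2_i}$ and $T_{\g2_i}$ on compact time-sets.

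The next step is to supply the injectivity-radius hypothesis needed to invoke the compactness Theorem~\ref{thm:cmptthmflows}, and this is where the uniform continuity of $g(t)$ is used. By \eqref{eq:unicontmet} all the metrics $g(t)$, $t\in[0,\tau)$, are uniformly equivalent to $g(0)$; since $(M,g(0))$ is compact there is a uniform small-ball bound $\Vol_{g(0)}(B_{g(0)}(x,s))\ge c_0 s^7$ for all $x\in M$, so combining this with the ball comparisons in \eqref{eq:volbounds} yields $\Vol_{g(t_i)}(B_{g(t_i)}(x_i,r))\ge c_1 r^7$ for all small $r$, uniformly in $i$. Applying this at the scale $r=Q_i^{-1/2}$ and rescaling gives $\Vol_{g_i(0)}(B_{g_i(0)}(x_i,1))\ge c_1>0$; together with $|\Rm_{g_i(0)}|\le 1$ this produces, via the standard Cheeger–Gromov–Taylor injectivity radius estimate, the bound $\inf_i\inj(M,g_i(0),x_i)>0$, which is \eqref{eq:cmpctthm2}. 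Theorem~\ref{thm:cmptthmflows} then yields, after passing to a subsequence, a complete pointed limit Ricci-harmonic flow $(M_\infty,\g2_\infty(t),x_\infty)$, $t\in(-\infty,b)$, with $\Lambda_{\g2_\infty}\le 1$ for $t\le 0$ and $\Lambda_{\g2_\infty}(x_\infty,0)=1$ by smooth convergence.

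Now I would exploit the scaling of the hypothesis \eqref{eq:ltestrhyp}. The quantity $R+4|T|^2+2\Div(\Vop T)$ is of curvature order, so for $\g2_i$ it equals $Q_i^{-1}$ times its value for the original flow, hence is bounded by $Q_i^{-1}C\to 0$; passing to the limit, $R_{\g2_\infty}+4|T_{\g2_\infty}|^2+2\Div(\Vop{T_{\g2_\infty}})\equiv 0$. But by the identity \eqref{eq:critpnts} this expression equals $10|T_1|^2+9|T_7|^2+3|T_{14}|^2+3|T_{27}|^2$, a sum of squares, so every intrinsic torsion form of $\g2_\infty$ vanishes: the limit is torsion-free, hence Ricci-flat, and (being a stationary point of the flow) static. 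In particular $\Lambda_{\g2_\infty}=|\Rm_{\g2_\infty}|$, so $(M_\infty,g_{\g2_\infty})$ is a complete \emph{non-flat} Ricci-flat $\G2$-manifold, since $|\Rm_{\g2_\infty}|(x_\infty,0)=1$.

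Finally I would extract the contradiction, following Šešum's argument for the Ricci flow (\cite[Thm. 6.40]{chow-lu-ni}, \cite{sesum}) and its adaptation by Lotay–Wei (\cite[Thm. 8.1]{lotay-wei-gafa}). The volume lower bounds of the second step in fact hold at every point and at every fixed scale of the limit (for fixed $R$ the un-rescaled radius tends to $0$ and so falls in the uniform small-scale Euclidean regime), so $M_\infty$ is $\kappa$-noncollapsed at all scales; this rules out $M_\infty$ compact (on which the volume ratio at large scales would tend to $0$) and, combined with Bishop–Gromov applied to the Ricci-flat metric, forces a definite asymptotic volume ratio. One then argues — as in the cited references — that a non-flat Ricci-flat blow-up limit with this property cannot occur, i.e. $g_{\g2_\infty}$ must be flat, contradicting $|\Rm_{\g2_\infty}|(x_\infty,0)=1$; hence the solution extends past $\tau$. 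I expect this last rigidity step to be the main obstacle: Perelman's monotonicity formulas are unavailable here because the underlying metric evolves by $-2\Ric+6T^tT-2|T|^2g$ rather than by $-2\Ric$, so one cannot directly appeal to $\kappa$-noncollapsing or the reduced volume; the uniform continuity hypothesis is precisely the substitute that furnishes the non-collapsing input, and the care needed is in carrying the resulting estimates through the rescaling and the limit.
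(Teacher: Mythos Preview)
Your proposal is correct and follows essentially the same route as the paper: contradiction via Theorem~\ref{thm:lte1}, point-picking and parabolic rescaling, volume control from the uniform continuity hypothesis feeding into Cheeger--Gromov--Taylor for the injectivity radius bound, compactness (Theorem~\ref{thm:cmptthmflows}), scaling of the quantity in \eqref{eq:ltestrhyp} to zero, and the identification of the limit as torsion-free hence Ricci-flat via \eqref{eq:scalarcurv}.

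The only place you are more tentative than you need to be is the final rigidity step. You phrase the conclusion as ``a definite asymptotic volume ratio'' and then worry this is the main obstacle. In fact the argument (and the paper's) is more direct: since the un-rescaled radii $rQ_i^{-1/2}\to 0$ and $(M,g(0))$ is smooth compact, the volume lower bound constant is not merely some $\kappa>0$ but can be taken arbitrarily close to the Euclidean constant $\omega_7$; combined with the uniform equivalence of metrics from \eqref{eq:unicontmet} this passes to the limit to give $\Vol_{g_\infty}(B_{g_\infty}(x,r))=\omega_7 r^7$ for all $r$. Bishop--Gromov (with $\Ric_{g_\infty}=0$) then gives equality in the volume comparison at every scale, so $(M_\infty,g_\infty)$ is isometric to $(\bR^7,g_{\mathrm{Eucl}})$, contradicting $|\Rm_{g_\infty}(x_\infty,0)|=1$. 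No appeal to Perelman-type monotonicity is needed; your concern there, while well-founded in general, is moot for this particular argument.
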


\begin{proof}
The proof is by contradiction. Suppose \eqref{eq:ltestrhyp} holds but the flow cannot be extended beyond time $\tau$. It follows from Theorem~\ref{thm:lte1} that there exists a sequence of points $(x_i, t_i)$ in space-time with $t_i\nearrow \tau$ such that
\begin{align}\label{eq:ltestraux1}
\Lambda(x_i, t_i)= \underset{x\in M,\ t\in[0,t_i]}{\text{sup}} \left(|\Rm(x,t)|^2+|\del T(x,t)|^2+|T(x,t)|^4\right)^{\frac 12}\rightarrow \infty.
\end{align}
It follows from the discussion after the proof of Theorem~\ref{thm:cmptthmflows}, and in the same way as in \cite[Thm. 8.1]{lotay-wei-gafa} by Lotay--Wei, that we can get a sequence of flows $(M, \g2_i(t), x_i)$ defined on $[-t_i\Lambda(x_i,t_i), 0]$ with 
\begin{align*}
\underset{M\times [-t_i\Lambda(x_i,t_i), 0]}{\text{sup}} |\Lambda_{\g2_i}(x,t)|\leq 1\ \ \ \text{and}\ \ \ |\Lambda_{\g2_i}(x_i,0)|=1.
\end{align*}
We now use the uniform continuity of the metrics $g(t)$ and \eqref{eq:volbounds} on the volume bounds of the geodesic balls imply that if $r\leq \Lambda(x_i,t_i)^{\frac 12}$ then for some uniform positive constant $c$, we have
\begin{align*}
\Vol_{g_i(0)}(B_{g_i(0)}(x,r))\geq C(1+\varepsilon)^{-\frac 72}r^7.
\end{align*}
This allows us to apply a theorem of Cheeger--Gromov--Taylor \cite[Theorem 5.42]{chow-lu-ni} to get uniform injectivity radius lower bound away from zero $\inj (M, g_i(0), x_i) \leq \gamma >0$. Using our compactness theorem, Theorem~\ref{thm:cmptthmflows}, we get a subsequence converging to a limit $(M_{\infty}, \g2_{\infty}, x_{\infty})$, $t\in (-\infty, 0]$ and $|\Lambda_{\g2_{\infty}}(x_{\infty}, 0)|=1$.

Since our assumption \eqref{eq:ltestrhyp} is that $R+4|T|^2+2\Div (\Vop T)$ remains bounded along the flow and $\Lambda(x_i, t_i)\rightarrow \infty$ as $i\rightarrow \infty$, we get by scalings along $\G2$-structures that
\begin{align}
(R_i+4|T_i|^2+2\Div_i (\Vop T_i))_{g_i(t)}(x,t)=\Lambda(x_i, t_i)^{-1}\left[(R+4|T|^2+2\Div (\Vop T))_{g(t_i+\Lambda(x_i,t_i)^{-1}t)}(x,t_i+\Lambda(x_i,t_i)^{-1}t)\right] \rightarrow 0,\label{eq:ltestraux2}
\end{align}
as $i\rightarrow \infty$. Using the expression for the scalar curvature \eqref{eq:scalarcurv}, we see that $(M_{\infty}, \g2_{\infty}(t))$ has all its intrinsic torsion zero and hence is torsion free for all $tin (-\infty, 0]$. Since the metrics of torsion-free $\G2$-structures are Ricci-flat, we have $\Ric_{g_{\infty}(t)}=0$ for all $t\in (-\infty, 0]$. 

Arguing again as in \cite[Thm. 8.1]{lotay-wei-gafa}, which in turn is based on the argument in \cite{sesum}, we know that $g_{\infty}(0)$ has Euclidean volume growth. Thus, from the Bishop--Gromov volume comparison theorem, $(M_{\infty}, g_{\infty})$ must be isometric to $(\bR^7, g_{\text{Eucl}})$ as $\Ric_{g_{\infty}}=0$. But this is a contradiction as by our point-picking argument we have $|\Lambda_{\g2_{\infty}}(x_{\infty}, 0)|=1$ and hence
\begin{align*}
|\Rm_{g_{\infty}}(x_{\infty}, 0)|= |\Lambda_{\g2_{\infty}}(x_{\infty}, 0)|=1.
\end{align*}
This completes the proof of the theorem.
\end{proof}

\begin{remark}
The assumption of uniform continuity of the metrics $g(t)$ was used to get volume bounds as in \eqref{eq:volbounds} which in turn was used to get the uniform positivity of the injectivity radius so that we can get the limit manifold having Euclidean volume growth. One can drop the uniformly continuous $g(t)$ assumption if there is another way to guarantee uniform positivity of the injectivity radius, for instance, by way of a $\kappa$-non-collapsing theorem. This was done for reasonable flows of $\G2$-structures (which the Ricci-harmonic flow is) by Gao Chen under the assumption of uniformly bounded torsion tensor, \cite[Thm. 5.2, Thm. 5.4]{gaochen-shi}. Thus, another way of proving long time existence results for the Ricci-harmonic flow is by using the results of Chen.\demo
\end{remark}

As an application of the previous theorem, we get
\begin{theorem}\label{thm:velbound}
Let $(M, \g2(t))$ be a Ricci-harmonic flow on a compact manifold with $t\in [0, \tau)$ and $\tau < \infty$. if the velocity of the flow in \eqref{heatfloweqn} is bounded then the solution can be extended past time $\tau$.
\end{theorem}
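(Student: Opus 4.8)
The plan is to reduce Theorem~\ref{thm:velbound} to Theorem~\ref{thm:ltestrong}. Along the Ricci-harmonic flow we have $\ptt \g2 = h\diamond \g2 + X\lrcorner \psi$ with $h = -\Ric + 3T^tT - |T|^2 g$ and $X = \Div T$, so the hypothesis that the velocity of the flow is bounded means precisely that $|\Ric| + |T|^2 + |\Div T|$ is bounded on $M\times[0,\tau)$; in particular $|\Ric|$ and $|T|^2$ are bounded there. Since $|T^tT| \le |T|^2$ and $\bigl||T|^2 g\bigr| = \sqrt{7}\,|T|^2$, this yields a uniform bound $|h| \le C$, whence by \eqref{eq:RHFgevol} we get $\bigl|\ptt g\bigr|_{g(t)} = |2h|_{g(t)} \le C$ on $M\times[0,\tau)$.

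First I would verify the two hypotheses of Theorem~\ref{thm:ltestrong}. For the uniform continuity of $g(t)$: starting from $\bigl|\ptt g\bigr|_{g(t)} \le C$ one runs the standard ODE-comparison estimate (as in \cite[Thm. 14.1]{hamilton-3manifolds}, and exactly as in the corresponding step of the proof of Theorem~\ref{thm:lte1}) to obtain $e^{-C|t-t_0|} g(t_0) \le g(t) \le e^{C|t-t_0|} g(t_0)$, which is uniform continuity in the sense of \eqref{eq:unicontmet}. For the bound \eqref{eq:ltestrhyp}: the scalar curvature is a metric contraction of $\Ric$, so $|R| \le C|\Ric| \le C$; the term $|T|^2$ is bounded by hypothesis; and $\Div(\Vop T)$ is bounded because the scalar-curvature identity \eqref{eq:scalarcurv} rearranges to
\begin{align*}
2\Div(\Vop T) = -R + 6|T_1|^2 - |T_{27}|^2 + 5|T_7|^2 - |T_{14}|^2,
\end{align*}
and every term on the right is bounded since $|R|$ and each $|T_i|^2 \le |T|^2$ are bounded. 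Hence $\sup_{M\times[0,\tau)}\bigl(R + 4|T|^2 + 2\Div(\Vop T)\bigr) < \infty$.

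With both hypotheses of Theorem~\ref{thm:ltestrong} in hand, that theorem shows that $\g2(t)$ extends past $\tau$, which is exactly the assertion of Theorem~\ref{thm:velbound}. The only ingredient that is not pure algebra is the uniform-continuity estimate for $g(t)$, and even there the work is just the routine comparison argument for $g(t)$ under a uniform bound on $\ptt g$; I do not expect a genuine obstacle, since the substantive analytic content — the point-picking/blow-up argument, the compactness theorem, and the Bishop--Gromov rigidity — is already packaged inside Theorem~\ref{thm:ltestrong}.
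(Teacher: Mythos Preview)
Your proposal is correct and follows essentially the same approach as the paper: both reduce to Theorem~\ref{thm:ltestrong} by checking that a bound on the velocity yields uniform continuity of $g(t)$ and the bound~\eqref{eq:ltestrhyp}. Your verification is in fact slightly more explicit than the paper's, which simply asserts the implication without spelling out the use of the scalar-curvature identity~\eqref{eq:scalarcurv} to control $\Div(\Vop T)$.
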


\begin{proof}
If the velocity of \eqref{heatfloweqn} is bounded, that is,
\begin{align}\label{eq:velaux1}
\underset{M \times [0, \tau)}{\text{sup}}|\Ric -3T^T+|T|^2g|_{g(t)} + |\Div T|_{g(t)} < \infty,
\end{align}
then the metrics $g(t)$ are uniformly bounded along the flow. Moreover, \eqref{eq:velaux1} also implies that 
\begin{align*}
\underset{M \times [0, \tau)}{\text{sup}} (R+4|T|^2+2\Div (\Vop T)) <\infty. 	
\end{align*}	
Thus, both the conditions of Theorem~\ref{thm:ltestrong} are satisfied and we can extend the solution beyond the time $\tau$.
\end{proof}

\section{Ricci-harmonic Solitons}\label{sec:solitons}

In this section, we develop the theory of solitons for the Ricci-harmonic flow. These are special solutions of the flow which are self-similar, i.e., they move only by diffeomorphisms and scalings of a given $\G2$-structure. Ricci solitons play a very important role in Hamilton--Perelman's program on the Ricci flow and the proof of Thurston's geometrization conjecture. They appear as fixed points of various new functionals (for instance $\mathcal{F}$ and $\mathcal{W}$-functional) introduced by Perelman and as a result they appear as singularity models for the Ricci flow. Since we proved a compactness theorem for the Ricci-harmonic flow Theorem~\ref{thm:cmptthmflows} and explained how it can be used to analyze a singular point, it can be expected that the singularities of the Ricci-harmonic might be modelled on self-similar solutions. With a viewpoint towards this expectation, we prove various results and identities for the solitons of the Ricci-harmonic flow.

\medskip

Let $M$ be a $7$-manifold. A \emph{soliton} for the Ricci-harmonic flow \eqref{ricci-harmonic flow} is a triple $(\g2, Y, \lambda)$ with $Y\in \Gamma(TM)$ and $\lambda\in \bR$ such that
\begin{align}\label{sol1}
	\left(-\Ric+3 T^tT-|T|^2g\right)\diamond \g2 +  \Div T\lrcorner \psi =\lambda \g2+\cL_Y\g2.
\end{align}

The $\G2$-structures which satisfy \eqref{sol1} are special as they give rise to self-similar solutions of the flow. More precisely, if $\g2_0$ satisfies \eqref{sol1} on $M$ for some vector field $Y$ and scalar $\lambda$ then for all $t$ such that $1+\frac 23 \lambda t>0$, define
\begin{align}\label{eq:selfsimilar1}
\rho(t)=\left(1+\frac 23\lambda t\right)^{\frac 32}\ \ \ \text{and}\ \ \ Y(t)=\rho(t)^{-\frac 23}Y.
\end{align}
The powers are chosen because $\tilde{g}=fg$ if $\tilde{\g2}=f^{\frac 32}\g2$. Let $\Theta(t)$ be the diffeomorphisms generated by the vector fields $X(t)$ starting from the identity map. Define
\begin{align}\label{eq:selfsimilar2}
\g2(t) = \rho(t)\Theta(t)^*\g2_0
\end{align}
which is a self-similar to $\g2_0$ as it moves $\g2_0$ only by scalings and diffeomorphisms. Differentiating \eqref{eq:selfsimilar2} with respect to time and using the fact that $\g2_0$ satisfies \eqref{sol1}, we immediately get
\begin{align*}
\ptt \g2(t) = \left(-\Ric+3 T^tT-|T|^2g\right)(t)\diamond_t \g2(t) +  \Div T(t)\lrcorner \psi(t), 
\end{align*}
and hence we get the equivalence between \eqref{sol1}, \eqref{eq:selfsimilar1} and \eqref{eq:selfsimilar2}.


\medskip

We say a soliton $(\g2, Y, \lambda)$ is expanding if $\lambda>0$; steady if $\lambda=0$; and shrinking if $\lambda <0$.

\medskip

We now derive the condition satisfied by the metric $g$ induced by $\g2$ and $\Div T$ when $(\g2, Y, \lambda)$ is a soliton, which we expect to have further use.

\medskip

\begin{proposition}\label{prop:gsoliton}
	Let $(\g2, Y, \lambda)$ be a solitons as defined in \eqref{sol1}. Then the induced metric $g$ satisfies
	\begin{align}\label{gsoliton1}
		R_{ij}-3T_{pi}T_{pj}+|T|^2g_{ij}+\frac12(\cL_Yg)_{ij}+\frac{\lambda}{3}g_{ij}=0,
	\end{align}
	and $\Div T$ satisfies
	\begin{align}\label{gsoliton2}
		\Div T + \frac 12 \curl Y-Y\lrcorner T=0.
	\end{align}
\end{proposition}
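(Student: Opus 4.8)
The plan is to expand both sides of the soliton equation \eqref{sol1} in the canonical decomposition of a $3$-form as $h\diamond\g2 + X\lrcorner\psi$ with $h$ a symmetric $2$-tensor and $X$ a vector field, and then to read off the two stated identities by uniqueness of this decomposition.

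First I would rewrite the right-hand side of \eqref{sol1}. Since $g\diamond\g2 = 3\g2$ (the general fact $g\diamond\gamma = k\gamma$ for a $k$-form recorded after \eqref{eq:diadefn}), we have $\lambda\g2 = \tfrac{\lambda}{3}\,g\diamond\g2$. Substituting the formula \eqref{eq:liederivativephi} for the Lie derivative,
\begin{align*}
\cL_Y\g2 = \tfrac12\cL_Yg\diamond\g2 + \left(-\tfrac12\curl Y + Y\lrcorner T\right)\lrcorner\psi,
\end{align*}
the right-hand side of \eqref{sol1} becomes
\begin{align*}
\left(\tfrac{\lambda}{3}g + \tfrac12\cL_Yg\right)\diamond\g2 + \left(-\tfrac12\curl Y + Y\lrcorner T\right)\lrcorner\psi,
\end{align*}
which is already in the form $h\diamond\g2 + X\lrcorner\psi$ with $h$ symmetric.

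Next I would note that the left-hand side of \eqref{sol1} is likewise of this form: $-\Ric + 3T^tT - |T|^2g$ is symmetric, since $\Ric$, $(T^tT)_{ij} = T_{pi}T_{pj}$ and $|T|^2 g_{ij}$ are all symmetric in $i,j$, and $\Div T$ is a vector field. Then I would invoke the decomposition $\Omega^3 = \Omega^3_{1\oplus27}\oplus\Omega^3_7$ together with the fact that $\diamond$ maps the symmetric part $C^\infty(M)g\oplus S^2_0$ isomorphically onto $\Omega^3_{1\oplus27}$ and $X\mapsto X\lrcorner\psi$ identifies vector fields with $\Omega^3_7$, to equate the two slots separately. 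Matching the $\diamond$-slots gives $-\Ric + 3T^tT - |T|^2g = \tfrac{\lambda}{3}g + \tfrac12\cL_Yg$, which rearranges to \eqref{gsoliton1}, and matching the $\lrcorner\psi$-slots gives $\Div T = -\tfrac12\curl Y + Y\lrcorner T$, which rearranges to \eqref{gsoliton2}.

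The only point needing care — and the mildest of obstacles — is the justification for equating the $\diamond$-parts: $\diamond$ has kernel $\Omega^2_{14}$, so one cannot in general cancel it, but here both $2$-tensors in the $\diamond$-slots are symmetric, hence have no $\Omega^2_{14}$-component, and $\diamond$ restricted to symmetric $2$-tensors is injective. Everything else is a direct substitution, so no further estimates or computations are required.
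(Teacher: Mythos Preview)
Your proof is correct and follows essentially the same route as the paper: rewrite $\lambda\g2$ as $\tfrac{\lambda}{3}g\diamond\g2$, expand $\cL_Y\g2$ via \eqref{eq:liederivativephi}, and then separate the $\Omega^3_{1\oplus27}$ and $\Omega^3_7$ parts. Your explicit remark about the kernel of $\diamond$ being $\Omega^2_{14}$ (hence harmless for symmetric tensors) is a nice point that the paper leaves implicit.
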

\begin{proof}
	The definition of the $\diamond$ operator in \eqref{eq:diadefn} implies $\g2=\left(\dfrac g3 \diamond \g2 \right)$. Moreover, \eqref{eq:liederivativephi} gives 
	\begin{align*}
	\cL_Y \g2&= \frac 12 \cL_Yg \diamond \g2 + \left(-\frac 12 \curl Y + Y\lrcorner T\right)\lrcorner \psi.
	\end{align*}

Using these expressions in \eqref{sol1} and using the fact that $\Omega^3_{1+ 27}$ is orthogonal to $\Omega^3_7$ gives \eqref{gsoliton1} and \eqref{gsoliton2}. 
\end{proof}

\medskip

We can prove the following non-existence theorem for compact expanding solitons of \eqref{ricci-harmonic flow}.

\begin{proposition}\label{prop:solmainprop}
	Let $(M^7, \g2, Y, \lambda)$ be a Ricci-harmonic soliton. We have the following.
	\begin{enumerate}
		\item There are no compact expanding solitons of \eqref{ricci-harmonic flow}.
		\item The only compact steady solitons of \eqref{ricci-harmonic flow} are given by torsion-free $\G2$-structures.
	\end{enumerate}
\end{proposition}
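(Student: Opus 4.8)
The plan is to reduce both statements to the $g$-trace of the metric soliton equation \eqref{gsoliton1}, in complete analogy with the computation of the stationary points leading to \eqref{eq:critpnts}. First I would take the trace of \eqref{gsoliton1} with respect to $g$. Using $\tr(T^tT)=|T|^2$, $\tr(|T|^2g)=7|T|^2$ and $\tr(\cL_Yg)=2\Div Y$, this yields
\[
R+4|T|^2+\Div Y+\tfrac{7\lambda}{3}=0.
\]
Next I would substitute the scalar curvature expression \eqref{eq:scalarcurv} together with the decomposition $|T|^2=|T_1|^2+|T_7|^2+|T_{14}|^2+|T_{27}|^2$. Exactly as in the derivation of \eqref{eq:critpnts}, the quadratic-in-torsion terms combine into a manifestly nonnegative expression, and one is left with
\[
10|T_1|^2+9|T_7|^2+3|T_{14}|^2+3|T_{27}|^2-2\Div(\Vop T)+\Div Y+\tfrac{7\lambda}{3}=0.
\]

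Then I would integrate this identity over the compact manifold $M$. Both divergence terms $\Div(\Vop T)$ and $\Div Y$ integrate to zero, so
\[
\int_M\left(10|T_1|^2+9|T_7|^2+3|T_{14}|^2+3|T_{27}|^2\right)\vol+\tfrac{7\lambda}{3}\Vol(M)=0.
\]
Since the first integrand is nonnegative, this forces $\lambda\le 0$, which proves part (1): there are no compact expanding solitons. For part (2), if $\lambda=0$ the same identity forces $T_1=T_7=T_{14}=T_{27}=0$, i.e.\ $\g2$ is torsion-free; conversely a torsion-free $\G2$-structure has $T=0$ and is Ricci-flat, hence satisfies \eqref{sol1} with $Y=0$ and $\lambda=0$ and is therefore a steady soliton.

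There is no real obstacle here: the argument is essentially a one-line integration once \eqref{gsoliton1} is traced, and the only mild subtlety — just as in the stationary-point discussion — is that the companion equation \eqref{gsoliton2} for $\Div T$ plays no role, while compactness is used only to discard the divergence terms. It is worth remarking that this reproduces the familiar dichotomy for Einstein metrics viewed as Ricci solitons: positive Einstein (here, nearly $\G2$) metrics are shrinking, Ricci-flat ones are steady, and there are no compact expanding examples.
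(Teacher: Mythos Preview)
Your proof is correct and follows essentially the same approach as the paper: trace \eqref{gsoliton1}, substitute \eqref{eq:scalarcurv}, integrate over compact $M$ to kill the divergence terms, and read off $\lambda\le 0$ with equality forcing $T\equiv 0$. The only addition is your explicit mention of the (trivial) converse for part~(2) and the closing remark, neither of which appears in the paper's proof.
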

\begin{proof}
	Taking the trace of \eqref{gsoliton1} gives
	\begin{align*}
		R+4|T|^2+ \Div Y + \frac 73 \lambda=0
	\end{align*}
	which on using the expression for the scalar curvature \eqref{eq:scalarcurv} simplifies to
	\begin{align}\label{gsol3}
	10|T_1|^2+9|T_7|^2+3|T_{14}|^2+3|T_{27}|^2-2\Div(\Vop T)+\frac 73\lambda + \Div Y=0.
	\end{align}
	Integrating \eqref{gsol3} on compact $M$ gives
	\begin{align*}
		\int_M \left(10|T_1|^2+9|T_7|^2+3|T_{14}|^2+3|T_{27}|^2\right)\vol + \frac 73 \lambda \text{Vol}(M)=0.
	\end{align*}
	So $\lambda \leq 0$ and $\lambda=0$ if and only if $T_i\equiv 0$ for $i=1, 7, 14, 27$ and hence $\g2$ must be torsion-free.
\end{proof}

\medskip

We now derive some identities for \emph{gradient} Ricci-harmonic solitons. A Ricci-harmonic soliton (RH soliton, for short) $(M, \g2, Y, \lambda)$ is gradient if the vector field $Y=\del f$ for some $f\in C^{\infty}(M)$. The metric and the torsion of a gradient Ricci-harmonic soliton satisfy
\begin{align}
R_{ij}-3T_{pi}T_{pj}+|T|^2g_{ij}+ \del_i\del_jf + \frac{\lambda}{3}g_{ij}&=0, \label{eq:gradsolmet} \\
\Div T_j  - \del_ifT_{ij}&=0. \label{eq:gradsolT}
\end{align}
Both these equations are obtained by putting $Y=\del f$ in Proposition~\ref{prop:gsoliton} with the fact that $\curl (\del f)_k=\del_i\del_jf\g2_{ijk}=0$ for any $f\in C^{\infty}(M)$.

\begin{lemma}\label{lem:soliden}
Let $(M^7, \g2, \del f, \lambda)$ be a gradient RH soliton. Then the following identities hold.
\begin{align}
R+4|T|^2+\Delta f + \frac 73 \lambda &=0. \label{eq:soliden1}\\
\del_j\left(R+6|T|^2+|\del f|^2+\frac{2\lambda}{3}f\right) &= 6T_{pj}T_{pm}\del _mf-6\del_i(T_{pi}T_{pj})-2|T|^2\del_jf. \label{eq:soliden2}
\end{align}
\end{lemma}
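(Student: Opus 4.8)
The plan is to derive both identities directly from the gradient soliton equations \eqref{eq:gradsolmet} and \eqref{eq:gradsolT} together with the contracted $\G2$-Bianchi identity \eqref{eq:scalarcurv}. For \eqref{eq:soliden1}, I would simply take the trace of \eqref{eq:gradsolmet}: since $\tr(\Ric)=R$, $\tr(T^tT)=|T|^2$ (using $T_{pi}T_{pi}=|T|^2$), $\tr(|T|^2 g)=7|T|^2$, $\tr(\del^2 f)=\Delta f$, and $\tr(g)=7$, we get $R-3|T|^2+7|T|^2+\Delta f+\tfrac{7\lambda}{3}=0$, which is exactly \eqref{eq:soliden1}. (This is essentially the same computation as \eqref{gsol3} specialized to $Y=\del f$, so I may just cite Proposition~\ref{prop:solmainprop}'s proof.)

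For \eqref{eq:soliden2}, the strategy is the standard one used to prove Hamilton's identity for gradient Ricci solitons: take the divergence of the soliton equation \eqref{eq:gradsolmet}, commute derivatives using the Ricci identity \eqref{eq:ricciiden}, and feed in the twice-contracted second Bianchi identity $\del_i R_{ij}=\tfrac12\del_j R$ together with the first derivative of the trace equation \eqref{eq:soliden1}. Concretely, applying $\del_i$ to \eqref{eq:gradsolmet} gives
\[
\del_i R_{ij}-3\del_i(T_{pi}T_{pj})+\del_j|T|^2+\del_i\del_i\del_j f+0=0.
\]
Then I rewrite $\del_i\del_i\del_j f=\del_i\del_j\del_i f=\del_j\del_i\del_i f+R_{ijim}\del_m f=\del_j\Delta f+R_{jm}\del_m f$ by the Ricci identity, substitute $\del_i R_{ij}=\tfrac12\del_j R$, and use $\del_j\Delta f=-\del_j(R+4|T|^2)$ from \eqref{eq:soliden1}. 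Collecting terms yields $\tfrac12\del_j R-\del_j(R+4|T|^2)-3\del_i(T_{pi}T_{pj})+\del_j|T|^2+R_{jm}\del_m f=0$. The remaining task is to eliminate $R_{jm}\del_m f$: contracting \eqref{eq:gradsolmet} against $\del_m f$ gives $R_{jm}\del_m f=3T_{pj}T_{pm}\del_m f-|T|^2\del_j f-\del_j\del_m f\,\del_m f-\tfrac{\lambda}{3}\del_j f=3T_{pj}T_{pm}\del_m f-|T|^2\del_j f-\tfrac12\del_j|\del f|^2-\tfrac{\lambda}{3}\del_j f$. Substituting this in and regrouping all the exact-derivative terms into $\del_j(\,\cdot\,)$ should produce precisely \eqref{eq:soliden2}; the torsion-squared divergence term $-6\del_i(T_{pi}T_{pj})$ and the term $6T_{pj}T_{pm}\del_m f$ survive on the right, matching the asserted form.

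I do not expect any genuine obstacle here—it is a bookkeeping computation—but the step requiring the most care is tracking the numerical coefficients: the factor $3$ in front of $T^tT$ in \eqref{eq:gradsolmet} propagates to the $6$'s in \eqref{eq:soliden2} only after combining the $-3\del_i(T_{pi}T_{pj})$ from the divergence with a further $-3\del_i(T_{pi}T_{pj})$-type contribution coming from the $R_{jm}\del_m f$ substitution, and one must be sure the $|T|^2\del_j f$ terms combine to give the stated $-2|T|^2\del_j f$ rather than some other multiple. A secondary subtlety is confirming that $-3\del_j|T|^2$ together with the $\del_j|T|^2$ from the divergence and the $|\del f|^2$ and $\tfrac{2\lambda}{3}f$ pieces assemble into the single gradient $\del_j\!\big(R+6|T|^2+|\del f|^2+\tfrac{2\lambda}{3}f\big)$ claimed on the left. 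I would verify these by writing out the coefficient of each monomial ($\del_j R$, $\del_j|T|^2$, $\del_j|\del f|^2$, $\lambda\del_j f$, $|T|^2\del_j f$, $T_{pj}T_{pm}\del_m f$, $\del_i(T_{pi}T_{pj})$) separately and checking both sides agree.
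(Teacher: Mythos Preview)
Your proposal is correct and follows essentially the same route as the paper: trace \eqref{eq:gradsolmet} for \eqref{eq:soliden1}, then take the divergence of \eqref{eq:gradsolmet}, commute $\del_i\del_i\del_j f$ to $\del_j\Delta f+R_{jm}\del_m f$, substitute $\del_j\Delta f$ from \eqref{eq:soliden1}, and finally replace $R_{jm}\del_m f$ using \eqref{eq:gradsolmet} contracted with $\del f$. One minor correction to your bookkeeping remark: the $6$ in front of $\del_i(T_{pi}T_{pj})$ does not arise from a second $-3\del_i(T_{pi}T_{pj})$ contribution hidden in the $R_{jm}\del_m f$ substitution (that substitution produces $T_{pj}T_{pm}\del_m f$, not a divergence term); rather, after substitution the equation reads $-\tfrac12\del_j R-3\del_j|T|^2-3\del_i(T_{pi}T_{pj})+3T_{pj}T_{pm}\del_m f-|T|^2\del_j f-\tfrac12\del_j|\del f|^2-\tfrac{\lambda}{3}\del_j f=0$, and multiplying through by $-2$ gives all the $6$'s and the $-2|T|^2\del_j f$ at once.
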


\begin{remark}
The equation \eqref{eq:soliden2} can be viewed as the analog of Hamilton's identity \cite[\textsection 20]{hamilton-singularities}
\begin{align*}
\del(R+|\del f|^2+\lambda  f)=0	
\end{align*}
for gradient Ricci solitons. \demo
\end{remark}

\begin{proof}
Taking the trace of \eqref{eq:gradsolmet} gives
\begin{align*}
R+4|T|^2+\Delta f + \frac 73 \lambda =0
\end{align*}
which proves \eqref{eq:soliden1}.

We take the divergence of \eqref{eq:gradsolmet} and use the twice contracted second Bianchi identity and the Ricci identity \eqref{eq:ricciiden}, to get
\begin{align}\label{eq:auxeq}
	\begin{split}
0&=\del_i\left(R_{ij}-3T_{pi}T_{pj}+|T|^2g_{ij}+ \del_i\del_jf + \frac{\lambda}{3}g_{ij}\right) \\
&= \frac 12\del_jR-3\del_i(T_{pi}T_{pj})+\del_j|T|^2+\del_j\Delta f+R_{jm}\del_mf \\
&= \frac 12\del_jR -3\del_i(T_{pi}T_{pj})+\del_j|T|^2+\del_j\left( -R-4|T|^2-\frac 73\lambda \right )+R_{jm}\del_mf\\
&=  \del_jR +6\del_j|T|^2+6\del_i(T_{pi}T_{pj})-2R_{jm}\del_mf, 
\end{split}
\end{align}
where we used \eqref{eq:soliden1}. We now use the soliton equation \eqref{eq:gradsolmet} again for the $R_{jm}$ term above in the last equality to further obtain
\begin{align*}
0&= \del_jR +6\del_j|T|^2+6\del_i(T_{pi}T_{pj})-2\left(3T_{pj}T_{pm}-|T|^2g_{jm}-\del_j\del_mf - \frac{\lambda}{3}g_{jm}\right)\del_mf \\
&= \del_jR + 6\del_j|T|^2+6\del_i(T_{pi}T_{pj})-6T_{pj}T_{pm}\del_mf +2|T|^2\del_jf +\del_j |\del f|^2+\frac{2\lambda}{3}\del_jf,
\end{align*}
which can be re-written as
\begin{align*}
\del_j\left(R+6|T|^2+|\del f|^2+\frac{2\lambda}{3}f\right) = 6T_{pj}T_{pm}\del _mf-6\del_i(T_{pi}T_{pj})-2|T|^2\del_jf,
\end{align*}
which proves \eqref{eq:soliden2}.
\end{proof}

\medskip

We now state and prove some identities for Ricci-harmonic solitons which, we believe, will be useful for studying further geometric and analytic properties of the former. We first recall the following

\begin{lemma}\label{lem:pw}
\emph{\cite[Lemma 2.1]{petersen-wylie2}.}
Let $X$ be a vector field on a Riemannian manifold $(M^n,g)$. Then
\begin{align}
\Div(\cL_Xg)(X)=\frac 12 \Delta |X|^2-|\del X|^2+\Ric(X,X)+\del_X \Div X. \label{eq:pw1}	
\end{align}	
When $X=\del f$ and $Z\in \Gamma(TM)$, then
\begin{align}\label{eq:pw2}
\Div(\cL_{\del f}g)(Z)=2\Ric(Z, \del f)+2\del_{Z} \Div \del f.	
\end{align}
\end{lemma}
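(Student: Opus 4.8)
\textbf{Proof proposal for Lemma~\ref{lem:pw}.}

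The plan is to prove both identities by direct computation in local coordinates, using the commutation formula for covariant derivatives and the second Bianchi identity, exactly as one does for the classical Bochner-type identities. First I would expand $\Div(\cL_X g)$ in index notation. Recall $(\cL_X g)_{ij} = \del_i X_j + \del_j X_i$, so that
\begin{align*}
\Div(\cL_X g)_j &= \del^i(\del_i X_j + \del_j X_i) = \Delta X_j + \del^i \del_j X_i.
\end{align*}
Then I would commute the derivatives in the second term using the Ricci identity, writing $\del^i \del_j X_i = \del_j \del^i X_i + R_{jm} X^m = \del_j (\Div X) + R_{jm}X^m$, where the sign of the curvature term comes from \eqref{eq:ricciiden} applied to the $1$-tensor $X$. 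Contracting with $X^j$ gives
\begin{align*}
\Div(\cL_X g)(X) &= X^j \Delta X_j + X^j \del_j(\Div X) + \Ric(X,X).
\end{align*}
The main point is then to recognize $X^j \Delta X_j = \tfrac12 \Delta |X|^2 - |\del X|^2$, which follows from the general identity $\Delta |X|^2 = 2\langle X, \Delta X\rangle + 2|\del X|^2$ (already used repeatedly in \textsection\ref{sec:evoleqns}), together with $X^j\del_j(\Div X) = \del_X \Div X$. Combining these yields \eqref{eq:pw1}.

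For \eqref{eq:pw2} I would specialize $X = \del f$ and pair against an arbitrary vector field $Z$ rather than $X$ itself, so the computation is slightly different: from $\Div(\cL_{\del f} g)_j = \Delta(\del_j f) + \del^i \del_j \del_i f$, I would commute derivatives in $\del^i \del_j \del_i f = \del_j \del^i \del_i f + R_{jm}\del^m f = \del_j(\Delta f) + R_{jm}\del^m f$ (using that $\del_i\del_j f$ is symmetric, so the Hessian term that would otherwise appear cancels), and also $\Delta(\del_j f) = \del_j(\Delta f) + R_{jm}\del^m f$ by the same commutation. Hence $\Div(\cL_{\del f} g)_j = 2\del_j(\Delta f) + 2R_{jm}\del^m f$; contracting with $Z^j$ and noting $\Div \del f = \Delta f$ and $Z^j\del_j(\Div \del f) = \del_Z \Div \del f$ gives \eqref{eq:pw2}.

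Since this is a standard Riemannian identity and is only quoted from \cite[Lemma 2.1]{petersen-wylie2}, there is essentially no obstacle; the only point requiring a little care is bookkeeping the sign conventions in the Ricci identity \eqref{eq:ricciiden} so that the curvature terms combine with the correct sign, and making sure the Hessian-symmetry cancellation in the gradient case is invoked explicitly. I would present the derivation compactly, perhaps even just citing the reference and indicating the two commutation steps, since the paper uses \eqref{eq:pw1}--\eqref{eq:pw2} only as a black box in the subsequent soliton identities.
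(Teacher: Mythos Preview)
The paper does not prove this lemma at all; it is simply quoted from \cite{petersen-wylie2} and used as a black box in the subsequent soliton computations. Your direct index computation via the Ricci identity and the Bochner-type identity $\Delta|X|^2 = 2\langle X,\Delta X\rangle + 2|\del X|^2$ is the standard argument and is correct, so there is nothing to compare.
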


We use the preceding lemma to prove the following.

\begin{lemma}\label{lem:pwsol}
Let $(M^7, \g2, X, \lambda)$ be a Ricci-harmonic soliton. Then 
\begin{align}\label{eq:pwsol1}
\frac 12 \Delta|X|^2&=|\del X|^2-\Ric (X,X)+2\del_X|T|^2+6\Div (T^t\circ T)(X),
\end{align}
and 
\begin{align}\label{eq:pwsol2}
\frac 12(\Delta-\del_X)|X|^2&=|\del X|^2+\frac{\lambda}{3}|X|^2+|T|^2|X|^2-3(T^t\circ T)(X,X) + 2\del_X|T|^2+6\Div (T^t\circ T)(X).
\end{align}
\end{lemma}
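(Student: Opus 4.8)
The plan is to combine the trace-type soliton identity already at hand with Petersen--Wylie's general formula \eqref{eq:pw1}. The key observation is that the soliton equation \eqref{sol1}, restricted to its $\Omega^3_{1\oplus 27}$ part, is precisely \eqref{gsoliton1}, which I will rewrite in the form
\begin{align*}
\tfrac12(\cL_Xg)_{ij} = -R_{ij}+3T_{pi}T_{pj}-|T|^2g_{ij}-\tfrac{\lambda}{3}g_{ij},
\end{align*}
where I write $(T^t\circ T)_{ij}=T_{pi}T_{pj}$. Taking the divergence of the right-hand side and contracting with $X$ gives
\begin{align*}
\Div(\cL_Xg)(X) = -\big(\Div\Ric\big)(X)+3\Div(T^t\circ T)(X)-\del_X|T|^2 = -\tfrac12\del_X R + 3\Div(T^t\circ T)(X) - \del_X|T|^2,
\end{align*}
using the twice-contracted Riemannian second Bianchi identity $\Div\Ric=\tfrac12\del R$. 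On the other hand, \eqref{eq:pw1} says this equals $\tfrac12\Delta|X|^2-|\del X|^2+\Ric(X,X)+\del_X\Div X$.

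First I would equate the two expressions and solve for $\tfrac12\Delta|X|^2$, obtaining
\begin{align*}
\tfrac12\Delta|X|^2 = |\del X|^2 - \Ric(X,X) - \del_X\Div X - \tfrac12\del_X R + 3\Div(T^t\circ T)(X) - \del_X|T|^2.
\end{align*}
The remaining task is to eliminate the terms $\del_X\Div X$ and $\tfrac12\del_X R$. For this I take the trace of \eqref{gsoliton1}, which gives $R+4|T|^2+\Div X+\tfrac73\lambda=0$ (the same computation already done in the proof of Proposition~\ref{prop:solmainprop}, but now keeping $\Div X$ rather than integrating). Applying $\del_X$ to this identity yields $\del_X R + 4\del_X|T|^2 + \del_X\Div X = 0$, i.e. $\del_X\Div X + \tfrac12\del_X R = -\tfrac12\del_X R - 4\del_X|T|^2$; more usefully, $-\del_X\Div X - \tfrac12\del_X R = \tfrac12\del_X R + 4\del_X|T|^2$... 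I will instead directly substitute $\del_X\Div X = -\del_X R - 4\del_X|T|^2$ to get
\begin{align*}
\tfrac12\Delta|X|^2 = |\del X|^2 - \Ric(X,X) + \del_X R + 4\del_X|T|^2 - \tfrac12\del_X R + 3\Div(T^t\circ T)(X) - \del_X|T|^2,
\end{align*}
which simplifies to
\begin{align*}
\tfrac12\Delta|X|^2 = |\del X|^2 - \Ric(X,X) + \tfrac12\del_X R + 3\del_X|T|^2 + 3\Div(T^t\circ T)(X).
\end{align*}
This is not yet \eqref{eq:pwsol1}, so the remaining manipulation must use the trace identity once more to convert $\tfrac12\del_X R + 3\del_X|T|^2$ into $2\del_X|T|^2 + 3\Div(T^t\circ T)(X)$; concretely, from $\del_X R = -4\del_X|T|^2 - \del_X\Div X$ one more substitution is needed, and I expect the bookkeeping here—tracking exactly which combination of $\del_X R$, $\del_X\Div X$, $\del_X|T|^2$ appears—to be the main obstacle, since a sign or coefficient slip propagates to the final statement. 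Once \eqref{eq:pwsol1} is established, \eqref{eq:pwsol2} follows immediately: subtract $\tfrac12\del_X|X|^2$ from both sides, and use the soliton equation \eqref{gsoliton1} contracted twice with $X$, namely $\Ric(X,X) = 3(T^t\circ T)(X,X) - |T|^2|X|^2 - \tfrac12(\cL_Xg)(X,X) - \tfrac{\lambda}{3}|X|^2$, together with $\tfrac12(\cL_Xg)(X,X) = \del_X|X|^2 - |\del X|^2$ (a standard identity, or directly $(\cL_Xg)(X,X)=2g(\del_X X,X)$) — wait, more carefully $\tfrac12(\cL_Xg)(X,X)=g(\nabla_X X,X)$, and I would instead just expand $-\Ric(X,X)$ using \eqref{gsoliton1} and collect. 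Substituting into \eqref{eq:pwsol1} and moving the $\del_X|X|^2$ term to the left produces exactly \eqref{eq:pwsol2}. The two lemmas \eqref{eq:pw1} and \eqref{eq:pw2} of Petersen--Wylie, the Bianchi identity, and the soliton/trace identities are all that is invoked; no new curvature computation is required, only careful algebra.
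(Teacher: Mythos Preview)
Your approach is essentially the same as the paper's---combine the Petersen--Wylie identity \eqref{eq:pw1}, the divergence of \eqref{gsoliton1}, and the trace identity---but you have a factor-of-$2$ slip that causes the computation to stall. When you rewrite \eqref{gsoliton1} as $\tfrac12(\cL_Xg)_{ij} = -R_{ij}+3(T^t\circ T)_{ij}-|T|^2g_{ij}-\tfrac{\lambda}{3}g_{ij}$ and take the divergence, the left-hand side is $\tfrac12\Div(\cL_Xg)$, not $\Div(\cL_Xg)$. The correct relation is therefore
\[
\Div(\cL_Xg)(X) = -2\Div\Ric(X) + 6\Div(T^t\circ T)(X) - 2\del_X|T|^2 = -\del_X R + 6\Div(T^t\circ T)(X) - 2\del_X|T|^2.
\]
Substituting this (rather than half of it) into \eqref{eq:pw1} and then using $\del_X\Div X = -\del_X R - 4\del_X|T|^2$ makes the $\del_X R$ terms cancel \emph{exactly}, and you land directly on \eqref{eq:pwsol1}. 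No further manipulation is needed; your proposed ``one more substitution'' to convert $\tfrac12\del_X R + 3\del_X|T|^2$ into something involving $\Div(T^t\circ T)(X)$ cannot work, since the latter is an independent quantity.

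For \eqref{eq:pwsol2} your plan is fine and matches the paper: contract \eqref{gsoliton1} twice with $X$ to get $\Ric(X,X) = 3(T^t\circ T)(X,X) - |T|^2|X|^2 - \tfrac12(\cL_Xg)(X,X) - \tfrac{\lambda}{3}|X|^2$, insert this into \eqref{eq:pwsol1}, and use $\tfrac12(\cL_Xg)(X,X) = g(\nabla_X X,X) = \tfrac12\del_X|X|^2$ to move that term to the left.
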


\begin{proof}
Taking the divergence of \eqref{gsoliton1} gives
\begin{align}\label{eq:pwsolaux1}
2\Div \Ric + \Div(\cL_Xg)+2 \del |T|^2 -6\Div (T^t\circ T)=0.
\end{align}
Moreover, since 
\begin{align*}
R+4|T|^2+ \Div X+\frac 73\lambda =0,
\end{align*}
we get
\begin{align}\label{eq:pwsolaux2}
\del_XR + 4\del_X|T|^2+\del_X(\Div X)=0.
\end{align}
Using \eqref{eq:pw1}, \eqref{eq:pwsolaux1}, \eqref{eq:pwsolaux2} and the twice contracted second Bianchi identity, we get
\begin{align*}
\del_X(\Div X)&=-\del_XR - 4\del_X|T|^2 \\
&= -2\Div \Ric(X) -4\del_X|T|^2\\
&= \Div (\cL_Xg)(X)+2\del_X|T|^2-6\Div(T^t\circ T)(X)-4\del_X|T|^2	\\
&=\frac 12 \Delta |X|^2-|\del X|^2+\Ric(X,X)+\del_X \Div X-2\del_X|T|^2-6\Div (T^t\circ T)(X)
\end{align*}
which gives
\begin{align*}
\frac 12 \Delta|X|^2&=|\del X|^2-\Ric (X,X)+2\del_X|T|^2+6\Div (T^t\circ T)(X),
\end{align*}
thus proving \eqref{eq:pwsol1}.

Again, using the soliton equation \eqref{gsoliton1}, we have
\begin{align*}
\Ric(X,X)=-\frac 12 (\cL_Xg)(X,X)-\frac{\lambda}{3}|X|^2-|T|^2|X|^2+3(T^t\circ T)(X,X),
\end{align*}
which on inserting in \eqref{eq:pwsol1}, implies
\begin{align*}
\frac 12 \Delta |X|^2&=|\del X|^2+\frac 12(\cL_Xg)(X,X)+\frac{\lambda}{3}|X|^2+|T|^2|X|^2-3(T^t\circ T)(X,X) + 2\del_X|T|^2+6\Div (T^t\circ T)(X) \\
&= |\del X|^2+\frac 12\del_X|X|^2+\frac{\lambda}{3}|X|^2+|T|^2|X|^2-3(T^t\circ T)(X,X) + 2\del_X|T|^2+6\Div (T^t\circ T)(X),
\end{align*}
which proves \eqref{eq:pwsol2}.

If a Ricci-harmonic soliton is compact then integrating \eqref{eq:pwsol1} from the previous lemma gives the following simple corollary.

\begin{corollary}\label{cor:soltrivial}
Let $(M^7, \g2, X, \lambda)$ be a compact Ricci-harmonic soliton. If
\begin{align}
\int_M \left(\Ric (X,X)-2\del_X|T|^2-6\Div (T^t\circ T)(X)\right) \vol \leq 0
\end{align}
then $X$ is a Killing field and the soliton is  trivial.
\end{corollary}

\end{proof}

\medskip
We also have the following integral formula for compact Ricci-harmonic gradient soliton.

\begin{lemma}\label{lem:intformsol}
Let $(M^7, \g2, \del f, \lambda)$ be a compact gradient Ricci-harmonic soliton. Then
\begin{align}\label{eq:intsol1}
\left.\int _M\right. \left|\del^2f-\frac{\Delta f}{7}g \right|^2 \vol &= \int_M \left[\frac{5}{14} \langle \del R, \del f\rangle +6T_{pj}T_{pm}\del_j\del_mf - 6\del_j\del_i(T_{pi}T_{pj})  -\frac 67|T|^2\Delta f \right] \vol.
\end{align}
\end{lemma}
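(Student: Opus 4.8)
The plan is to derive \eqref{eq:intsol1} by the standard Bochner-type integration argument adapted to the gradient Ricci-harmonic soliton equation \eqref{eq:gradsolmet}. First I would expand the left-hand side pointwise: since $\del^2 f - \tfrac{\Delta f}{7}g$ is the traceless part of the Hessian, we have
\[
\left|\del^2 f - \frac{\Delta f}{7}g\right|^2 = |\del^2 f|^2 - \frac{(\Delta f)^2}{7}.
\]
Then I would integrate by parts on $\int_M |\del^2 f|^2 \vol$, writing $\int_M \del_i\del_j f\,\del_i\del_j f \vol = -\int_M \del_j f\, \del_i\del_i\del_j f \vol = -\int_M \del_j f\,\del_j \Delta f \vol - \int_M R_{jm}\del_j f\,\del_m f \vol$ using the Ricci identity \eqref{eq:ricciiden} to commute derivatives, and similarly $\int_M (\Delta f)^2 \vol = -\int_M \del_j f\,\del_j \Delta f \vol$ after one integration by parts. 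Subtracting, the $\int \del_j f\,\del_j\Delta f$ terms contribute with coefficient $-1+\tfrac17 = -\tfrac67$, leaving
\[
\int_M \left|\del^2 f - \frac{\Delta f}{7}g\right|^2 \vol = -\frac67 \int_M \langle \del f, \del \Delta f\rangle \vol - \int_M \Ric(\del f,\del f) \vol.
\]

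Next I would eliminate $\Delta f$ and $\Ric$ using the soliton identities from Lemma~\ref{lem:soliden}. From \eqref{eq:soliden1}, $\Delta f = -R - 4|T|^2 - \tfrac73\lambda$, so $\del_j \Delta f = -\del_j R - 4\del_j |T|^2$, giving $-\tfrac67\int_M \langle \del f,\del\Delta f\rangle = \tfrac67\int_M \langle \del f, \del R\rangle \vol + \tfrac{24}{7}\int_M \langle \del f, \del |T|^2\rangle \vol$. For the Ricci term, I would use \eqref{eq:gradsolmet} in the form $R_{jm} = 3T_{pj}T_{pm} - |T|^2 g_{jm} - \del_j\del_m f - \tfrac{\lambda}{3}g_{jm}$, so that
\[
-\int_M \Ric(\del f,\del f)\vol = \int_M \left(-3 T_{pj}T_{pm}\del_j f\,\del_m f + |T|^2|\del f|^2 + \del_j\del_m f\,\del_j f\,\del_m f + \frac{\lambda}{3}|\del f|^2\right)\vol.
\]
The cubic term $\int_M \del_j\del_m f\,\del_j f\,\del_m f \vol = \tfrac12\int_M \langle \del|\del f|^2, \del f\rangle \vol = -\tfrac12\int_M |\del f|^2 \Delta f \vol$, and similarly $\int_M \tfrac{\lambda}{3}|\del f|^2 \vol = -\tfrac{\lambda}{3}\int_M f\Delta f\vol$ (or can be absorbed). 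I would then feed in \eqref{eq:soliden2}, namely $\del_j\big(R + 6|T|^2 + |\del f|^2 + \tfrac{2\lambda}{3}f\big) = 6T_{pj}T_{pm}\del_m f - 6\del_i(T_{pi}T_{pj}) - 2|T|^2\del_j f$, paired against $\del_j f$ and integrated: this converts the awkward combination $\tfrac67\int\langle\del f,\del R\rangle + (\text{terms in } |T|^2, |\del f|^2, \lambda f)$ into the target right-hand side with the $6T_{pj}T_{pm}\del_j\del_m f$ and $-6\del_j\del_i(T_{pi}T_{pj})$ and $-\tfrac67|T|^2\Delta f$ pieces after integrating by parts once more (moving the $\del_j$ off of $T_{pj}T_{pm}\del_m f$ and off $\del_i(T_{pi}T_{pj})$, and using $-\langle \del|T|^2,\del f\rangle = |T|^2\Delta f$ appropriately).

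The main obstacle will be the bookkeeping of the various $|T|^2$- and $\langle\del|T|^2,\del f\rangle$-type terms and the $\lambda$-dependent terms: one has to check that all the contributions involving $|T|^2|\del f|^2$, $\langle \del|T|^2,\del f\rangle$, $\lambda f$ and $\lambda|\del f|^2$ cancel against each other so that the final answer contains only the five terms displayed in \eqref{eq:intsol1} with the precise coefficient $\tfrac{5}{14}$ on $\langle\del R,\del f\rangle$ — note $\tfrac{5}{14} = \tfrac67 - \tfrac12\cdot\tfrac17\cdot\ldots$, so the coefficient $\tfrac{5}{14}$ should emerge as $\tfrac67 - \tfrac{1}{2}\cdot\big(\text{coefficient from pairing \eqref{eq:soliden2} with }\del f\text{ and using }\Delta f = -R - \cdots\big)$; getting this constant exactly right is the delicate point and will require carefully tracking which integrations by parts produce a factor of $\Delta f = -R-4|T|^2-\tfrac73\lambda$ that then re-expands. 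I would organize the computation by first collecting all terms proportional to $\langle\del R,\del f\rangle$, then all terms proportional to $|T|^2\Delta f$ (equivalently $\langle\del|T|^2,\del f\rangle$), then the genuinely torsion-quadratic terms $T_{pj}T_{pm}\del_j\del_m f$ and $\del_j\del_i(T_{pi}T_{pj})$, and finally verify the remaining $\lambda$- and $|\del f|^2$-terms sum to zero on the compact manifold after integration by parts.
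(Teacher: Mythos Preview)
Your approach is correct in principle and uses the same ingredients as the paper (the traced soliton identity \eqref{eq:soliden1}, the Hamilton-type identity \eqref{eq:soliden2}, the soliton equation \eqref{eq:gradsolmet}, and the Bochner formula), but the organization differs. You work on the \emph{integral} side throughout: you first reduce $\int |\del^2 f - \tfrac{\Delta f}{7}g|^2$ to $-\tfrac{6}{7}\int\langle\del f,\del\Delta f\rangle - \int\Ric(\del f,\del f)$ via integration by parts and the Ricci identity, and then feed in the soliton identities term by term. The paper instead works \emph{pointwise}: it takes the divergence of \eqref{eq:soliden2}, replaces $\Delta|\del f|^2$ by the Bochner formula, and uses the intermediate identity $\del_j\Delta f + R_{jm}\del_m f = -\tfrac12\del_j R + 3\del_i(T_{pi}T_{pj}) - \del_j|T|^2$ (the second line of \eqref{eq:auxeq}) to eliminate the combination $2\langle\del\Delta f,\del f\rangle + 2\Ric(\del f,\del f)$ in one stroke. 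This yields a clean pointwise equation relating $2|\del^2 f - \tfrac{\Delta f}{7}g|^2$ to the target terms plus total Laplacians; a single integration then finishes. The advantage of the paper's route is that the cancellations you flag as ``delicate'' (the $|T|^2|\del f|^2$, $\langle\del|T|^2,\del f\rangle$, $\lambda$-terms) happen automatically at the pointwise level before any integration by parts, so there is essentially no bookkeeping left. Your route will reach the same endpoint, but you should expect to do more work tracking cross-terms; in particular, note that the term $-6\del_j\del_i(T_{pi}T_{pj})$ in the target is a pure double divergence and integrates to zero, so it can be inserted for free at the end.
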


\begin{proof}
We take the divergence of \eqref{eq:soliden2} to get
\begin{align*}
\Delta R+6\Delta |T|^2+\Delta |\del f|^2+\frac{2\lambda}{3}\Delta f&=6\del_j(T_{pj}T_{pm})\del_mf + 6T_{pj}T_{pm}\del_j\del_mf - 6\del_j\del_i(T_{pi}T_{pj})-2\langle \del |T|^2, \del f\rangle \nonumber \\
& \quad -2|T|^2\Delta f,
\end{align*}
which on using the Bochner formula
\begin{align*}
\Delta |\del f|^2=2\langle \del (\Delta f), \del f\rangle + 2\Ric(\del f, \del f)+2|\del^2f|^2,
\end{align*}
becomes
\begin{align}
\Delta R+6\Delta |T|^2+2\langle \del (\Delta f), \del f\rangle + 2\Ric(\del f, \del f)+2|\del^2f|^2+\frac{2\lambda}{3}\Delta f&=6\del_j(T_{pj}T_{pm})\del_mf + 6T_{pj}T_{pm}\del_j\del_mf \nonumber \\
& \quad - 6\del_j\del_i(T_{pi}T_{pj})-2\langle \del |T|^2, \del f\rangle \nonumber \\
& \quad -2|T|^2\Delta f. \label{eq:intsolaux1}
\end{align}

Using the second equality in \eqref{eq:auxeq} for the third and the fourth term on the right hand side of \eqref{eq:intsolaux1} makes the right hand side
\begin{align*}
	\Delta R + 6\Delta |T|^2 - \langle \del R, \del f\rangle+6\del_i(T_{pi}T_{pj})\del_jf - 2\langle \del |T|^2, \del f\rangle + 2|\del^2f|^2+\frac{2\lambda}{3} \Delta f
\end{align*}
and hence \eqref{eq:intsolaux1} becomes
\begin{align}
	\Delta R + 6\Delta |T|^2 - \langle \del R, \del f\rangle + 2|\del^2f|^2+\frac{2\lambda}{3} \Delta f&= 6T_{pj}T_{pm}\del_j\del_mf - 6\del_j\del_i(T_{pi}T_{pj}) -2|T|^2\Delta f. \label{eq:intsolaux2}
\end{align}

Since $\left|\del^2f-\frac{\Delta f}{n}g \right|^2=|\del^2f|^2-\frac{(\Delta f)^2}{n}$, \eqref{eq:intsolaux2} becomes,
\begin{align*}
\Delta R + 6\Delta |T|^2 + 2\left|\del^2f-\frac{\Delta f}{7}g \right|^2&= \langle \del R, \del f\rangle -\frac{2\lambda}{3}\Delta f -2\frac{(\Delta f)^2}{7}+6T_{pj}T_{pm}\del_j\del_mf - 6\del_j\del_i(T_{pi}T_{pj}) \\
& \quad  -2|T|^2\Delta f \\
&=\langle \del R, \del f\rangle +2\Delta f \left(-\frac{\lambda}{3}-\frac{\Delta f}{7}\right)+6T_{pj}T_{pm}\del_j\del_mf - 6\del_j\del_i(T_{pi}T_{pj}) \\
& \quad  -2|T|^2\Delta f \\
&= \langle \del R, \del f\rangle +\frac 27\Delta f \left(R+4|T|^2\right)+6T_{pj}T_{pm}\del_j\del_mf - 6\del_j\del_i(T_{pi}T_{pj}) \\
& \quad  -2|T|^2\Delta f \\
&=\langle \del R, \del f\rangle +\frac 27R \Delta f +6T_{pj}T_{pm}\del_j\del_mf - 6\del_j\del_i(T_{pi}T_{pj})  -\frac 67|T|^2\Delta f .
\end{align*}

Integrating by parts on compact $M$, we get
\begin{align*}
\left.\int _M\right. \left|\del^2f-\frac{\Delta f}{7}g \right|^2 \vol &= \int_M \left[\frac{5}{14} \langle \del R, \del f\rangle +6T_{pj}T_{pm}\del_j\del_mf - 6\del_j\del_i(T_{pi}T_{pj})  -\frac 67|T|^2\Delta f \right] \vol ,
\end{align*}
which is precisely \eqref{eq:intsol1}.
\end{proof}

We end this section by deriving the expression of the drift Laplacian acting on the scalar curvature of a gradient Ricci-harmonic soliton. The motivation for us is the importance of the drift Laplacian operator in various rigidity and classification results for gradient Ricci solitons. We have the following lemma.

\begin{lemma}\label{lem:driftR}
Let $(M, \g2, \lambda, Y)$ be a Ricci-harmonic soliton. Then the drift Laplacian of the scalar curvature $R$ is given by 
\begin{align}\label{eq:driftR}
\Delta_YR=6\Delta |T|^2 + 6 \del_j  \del_i (T_{pi}T_{pj}) + 2|\Ric|^2 - 6 R_{ij}T_{pi}T_{pj} + 2|T|^2R - \frac 23\lambda R.
\end{align}
\end{lemma}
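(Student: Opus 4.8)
The plan is to deduce \eqref{eq:driftR} by combining the evolution of the scalar curvature along the flow, already recorded in \eqref{eq:RHFscalevol}, with the self-similar structure of a soliton. First I would recall that for any solution $\g2(t)$ of the Ricci-harmonic flow, \eqref{eq:RHFscalevol} gives the pointwise identity
\begin{align*}
\ptt R = \Delta R + 6\Delta |T|^2 + 6\del_j\del_i(T_{pi}T_{pj}) + 2|\Ric|^2 - 6R_{ij}T_{pi}T_{pj} + 2|T|^2 R,
\end{align*}
valid at every time, in particular at $t=0$. Next, if $(\g2,Y,\lambda)$ is a soliton then the associated self-similar solution is $\g2(t)=\rho(t)\Theta(t)^*\g2$ with $\rho(t)=(1+\tfrac{2}{3}\lambda t)^{3/2}$ and $\Theta(t)$ the flow of $\rho(t)^{-2/3}Y$ starting from the identity, as in \eqref{eq:selfsimilar1}--\eqref{eq:selfsimilar2}. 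Since $\tilde g = f g$ whenever $\tilde\g2 = f^{3/2}\g2$, the induced metric is $g(t)=\rho(t)^{2/3}\Theta(t)^*g$, and because the scalar curvature of $fg$ is $f^{-1}$ times that of $g$ we get $R_{g(t)}=\rho(t)^{-2/3}\Theta(t)^*R_g$. Differentiating this at $t=0$, using $\rho(0)=1$, $\rho'(0)=\lambda$, $\Theta(0)=\Id$, $\tfrac{d}{dt}\big|_{0}\Theta(t)^{*}=\cL_{Y}$ (the generating vector field at time $0$ being $Y$), and $\cL_Y R=\del_Y R$ for a function, I would obtain
\begin{align*}
\left.\ptt R\right|_{t=0}= -\tfrac{2}{3}\lambda R + \del_Y R .
\end{align*}
Equating the two expressions for $\ptt R$ at $t=0$ and rearranging so that the drift Laplacian $\Delta_Y R = \Delta R - \del_Y R$ appears gives \eqref{eq:driftR}; since the soliton looks the same at every time up to diffeomorphism and scaling, the resulting tensor identity holds on $(\g2,g)$ itself.

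Alternatively, and more in the spirit of the derivation of \eqref{eq:RHFscalevol} itself, I would give a purely static argument: solve the metric soliton equation \eqref{gsoliton1} of Proposition~\ref{prop:gsoliton} for $\cL_Y g=-2\Ric+6T^tT-2|T|^2 g-\tfrac{2}{3}\lambda g$, and apply the first variation formula \eqref{eq:genscalevol} for the scalar curvature term by term to this symmetric $2$-tensor, using the twice-contracted second Bianchi identity to handle the $\Div\Div(\Ric)$ contribution. Since the left-hand side is $DR_g[\cL_Y g]=\cL_Y R=\del_Y R$, rearranging once more produces \eqref{eq:driftR} without ever passing through the self-similar solution.

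The computation is routine rather than deep. The one point requiring care is the scaling weight of $R$ — that is, producing the coefficient $-\tfrac{2}{3}\lambda$ in front of $R$ correctly from $R_{g(t)}=\rho(t)^{-2/3}\Theta(t)^*R_g$ — and, in the static version, remembering that $|T|^2 g$ is a \emph{function} times the metric, so that $\Delta\tr(|T|^2 g)$ and $\Div\Div(|T|^2 g)$ both contribute extra Laplacian-of-$|T|^2$ terms, rather than treating it like the genuine scaling term $\lambda g$.
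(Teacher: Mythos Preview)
Your primary argument is exactly the paper's: write the self-similar solution $\g2(t)=\rho(t)\Theta(t)^*\g2$, deduce $R_{g(t)}=\rho(t)^{-2/3}\Theta(t)^*R$, differentiate at $t=0$, and equate with the scalar-curvature evolution \eqref{eq:RHFscalevol}. Your alternative static route via \eqref{gsoliton1} and the variation formula \eqref{eq:genscalevol} is not in the paper but is equivalent and arguably cleaner, since it avoids setting up the one-parameter family entirely; the only trade-off is that one must redo by hand the second-Bianchi manipulations already absorbed into \eqref{eq:RHFscalevol}.
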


\begin{proof}
If $\rho(t)$ and $Y(t)$ are as defined in \eqref{eq:selfsimilar1}, $\Theta(t)$ is the family of diffeomorphisms on $M$ generated by $Y(t)$ starting from the identity map and $\g2(t) = \rho(t)\Theta^*\g2$ is a Ricci-harmonic flow with initial condition $\g2$ then the underlying metric is $g(t)=\left(1+\frac 23\lambda t\right)\Theta^*(t)g$. Hence, the scalar curvature is 
\begin{align}\label{eq:driftR1}
R(t) = \left(1+\frac 23\lambda t \right)^{-1}\Theta^*R.
\end{align}
Differentiating \eqref{eq:driftR1} with respect to $t$ and setting $t=0$ while also using the evolution of the scalar curvature along the Ricci-harmonic flow \eqref{eq:RHFscalevol}, we get
\begin{align*}
\frac 23\lambda R+\cL_Y R = \ptt R(t)\Big|_{0}=\Delta R +  6\Delta |T|^2 + 6 \del_j  \del_i (T_{pi}T_{pj}) + 2|\Ric|^2 - 6 R_{ij}T_{pi}T_{pj} + 2|T|^2R
\end{align*}
and hence
\begin{align*}
\Delta R - \langle \del R, Y\rangle =  6\Delta |T|^2 + 6 \del_j  \del_i (T_{pi}T_{pj}) + 2|\Ric|^2 - 6 R_{ij}T_{pi}T_{pj} + 2|T|^2R - \frac 23\lambda R,
\end{align*}
which proves \eqref{eq:driftR}.
\end{proof}

\section{Ricci-harmonic flow for Spin(7)-structures}\label{sec-rhfspin7}
We  derive the Taylor series expansion of a Spin(7)-structure on an $8$-manifold $M^8$ and look at the "heat equation" for Spin(7)-structures which will give us a suitable coupling of the Ricci flow of Spin(7)-structures and the harmonic flow of Spin(7)-structures which was proposed in \cite{dle-isometric}.

\subsection{Taylor series expansion of Spin(7)-structures}\label{subsec:texpspin7}
Let $(M^8, \Phi)$ be a manifold with a Spin(7)-structure. For $x\in M$, we choose our local orthonormal frame $\{e_1,\ldots,e_8\}$ of $T_xM$ to be Spin(7)-\emph{adapted} which means that at the point $x$, the components $\Phi_{ijkl}$ agree with those of the standard flat model on $\bR^8$. We now state and prove the Taylor series expansion of a Spin(7)-structure which has not appeared in the literature before and is analogous to the result for $\G2$-structures Theorem~\ref{thm:taylor-g2}.

\begin{theorem}\label{thm:taylorspin7}
Let $(x^1,\ldots, x^8)$ be Spin(7)-adapted Riemannian normal coordinates centred at $x\in M$. The components $\Phi_{ijkl}$ of $\Phi$ have Taylor expansions about $0\in \bR^8$ corresponding to $x\in M^8$, given by
\begin{align}\label{eq:taylorspin7}
\Phi_{ijkl}(x^1,\ldots, x^8)&= \Phi_{ijkl}+\left(T_{q;im}\s7_{mjkl}+T_{q;jm}\s7_{imkl}+T_{q;kp}\s7_{ijml}+T_{q;lm}\s7_{ijkm}  \right)x^q+ Q_{pqijkl}x^px^q+O(||x||^3),
\end{align}
where 
\begin{align}\label{eq:quadtaylor}
	Q_{pqijkl}&=\frac 12 \left(\del_pT_{q;im}\Phi_{mjkl}+\del_pT_{q;jm}\Phi_{imkl}+\del_pT_{q;km}\Phi_{ijml}+\del_pT_{q;lm}\Phi_{ijkm}\right)  \nonumber \\
	& \qquad + \frac 12 T_{q;im}(T_{p;ms}\s7_{sjkl}+T_{p;js}\s7_{mskl}+T_{p;ks}\s7_{mjsl}+T_{p;ls}\s7_{mjks})\nonumber \\
	&\qquad + \frac 12 T_{q;jm}(T_{p;is}\s7_{smkl}+T_{p;ms}\s7_{iskl}+T_{p;ks}\s7_{imsl}+T_{p;ls}\s7_{imks})\nonumber \\
	& \qquad + \frac 12 T_{q;km}(T_{p;is}\s7_{sjml}+T_{p;js}\s7_{isml}+T_{p;ms}\s7_{ijsl}+T_{p;ls}\s7_{ijms}) \nonumber\\
	&\qquad + \frac 12 T_{q;lm}(T_{p;is}\s7_{sjkm}+T_{p;js}\s7_{iskm}+T_{p;ks}\s7_{ijsm}+T_{p;ms}\s7_{ijks})\nonumber \\
	& \quad +\frac{1}{6}(R_{piqm}\Phi_{mjkl}+R_{pjqm}\Phi_{imkl}+R_{pkqm}\Phi_{ijml}+R_{plqm}\Phi_{ijkm}).
\end{align}
Here all coefficient tensors on the right hand side are evaluated at $0$.
\end{theorem}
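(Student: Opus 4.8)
The plan is to emulate the proof of the $\G2$ analogue Theorem~\ref{thm:taylor-g2} from \cite{dgk-flows}, with the $4$-form $\s7$ and its $\Omega^1_8\otimes\Omega^2_7$-valued torsion $T_{m;ab}$ in place of the $3$-form $\g2$ and its $2$-tensor torsion. First I would record the general fact that, in Riemannian normal coordinates centred at $x$, the components of any $(0,r)$-tensor $S$ on $M$ satisfy
\begin{equation*}
	S_{i_1\cdots i_r}(x) = S_{i_1\cdots i_r} + (\del_q S_{i_1\cdots i_r})\, x^q + \tfrac12\Big(\del_p\del_q S_{i_1\cdots i_r} + \tfrac13\sum\nolimits_{a=1}^{r} R_{piqm}\,S_{i_1\cdots i_{a-1}\, m\, i_{a+1}\cdots i_r}\Big)x^px^q + O(\|x\|^3),
\end{equation*}
with all coefficients evaluated at the origin; here $\del = \nabla$ is the Levi-Civita connection. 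This is just the tensorial form of \eqref{eq:tsmetric} — setting $S=g$, for which $\del g\equiv 0$ and $\del\del g\equiv 0$, reproduces \eqref{eq:tsmetric} and so fixes the sign and the index placement of the curvature correction — and it follows from the usual relation between coordinate and covariant second derivatives in normal coordinates, using that $\del\Gamma$ at the origin is built from the curvature and that only the part symmetric in $p,q$ survives contraction with $x^px^q$.

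Applying this with $S = \s7$ reduces the proof to computing $\del_q\s7_{ijkl}$ and $\del_p\del_q\s7_{ijkl}$ at the origin. The linear term is immediate from the definition of the torsion \eqref{Tdefneqn}: $\del_q\s7_{ijkl} = (T_q\diamond\s7)_{ijkl} = T_{q;im}\s7_{mjkl} + T_{q;jm}\s7_{imkl} + T_{q;km}\s7_{ijml} + T_{q;lm}\s7_{ijkm}$, which is the coefficient of $x^q$ in \eqref{eq:taylorspin7}. For the quadratic part, since \eqref{Tdefneqn} is a tensorial identity one has $\del_p\del_q\s7_{ijkl} = \nabla_p\big((T_q\diamond\s7)_{ijkl}\big)$, which I would expand by the Leibniz rule: whenever $\nabla_p$ falls on a factor of torsion it produces a term $\nabla_p T_{q;\,\cdot\,\cdot}$, contributing the first line of \eqref{eq:quadtaylor}; whenever it falls on a factor of $\s7$ it produces, by \eqref{Tdefneqn} again, a term $T_{q;\,\cdot\,\cdot}\,(T_p\diamond\s7)_{\,\cdot\,\cdot\,\cdot\,\cdot}$, and expanding the inner $\diamond$ via \eqref{eq:diadefn2} gives exactly the four quadratic-in-torsion lines of \eqref{eq:quadtaylor}. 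Adding the curvature correction $\tfrac13(R_{piqm}\s7_{mjkl}+R_{pjqm}\s7_{imkl}+R_{pkqm}\s7_{ijml}+R_{plqm}\s7_{ijkm})$ coming from the general expansion and multiplying the whole second-order coefficient by $\tfrac12$ produces $Q_{pqijkl}$ exactly as in \eqref{eq:quadtaylor}. As is standard, $Q_{pqijkl}$ is only determined through its part symmetric in $p,q$; we keep the unsymmetrised expression generated by the computation, matching the convention of Theorem~\ref{thm:taylor-g2}.

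There is no conceptual obstacle: the argument is a line-for-line transcription of the $\G2$ proof in \cite{dgk-flows}. The only real work is index bookkeeping — keeping the semicolon-indexed torsion $T_{m;ab}$ in the right slots through the four cyclic expansions over the free indices of $\s7$, making sure each application of \eqref{eq:diadefn2} to $(T_p\diamond\s7)$ contracts the new dummy index into the correct position, and confirming that the curvature coefficient comes out as $\tfrac16$ in the sign convention fixed by \eqref{eq:tsmetric}. Those are the spots where I would cross-check the computation against the $\G2$ case, but no new phenomenon appears.
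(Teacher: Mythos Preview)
Your proposal is correct and follows essentially the same strategy as the paper's proof. The only cosmetic difference is packaging: you first record the general second-order normal-coordinate expansion for an arbitrary $(0,r)$-tensor and then specialise to $S=\s7$, whereas the paper works directly with $\s7$, writing $\pt_q\s7_{ijkl}=\del_q\s7_{ijkl}+\Gamma\cdot\s7$, differentiating once more, and using the normal-coordinate identity for $\pt_p\Gamma^{m}_{qi}$ to produce the two curvature terms $\tfrac{1}{3}(R^{m}_{pqi}+R^{m}_{piq})$, the first of which drops out upon contraction with $x^p x^q$; both routes are the same computation and land on the same $\tfrac{1}{6}$ curvature coefficient.
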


\begin{proof}
We follow the proof in \cite[Thm. 2.25]{dgk-flows}. The constant term in \eqref{eq:taylorspin7} is $\Phi_{ijkl}$ and that is precisely due to our choice of Spin(7)-adapted orthonormal frame at $x$. For the first order term in the expansion, we start  with the formula
\begin{align}\label{eq:tayloraux1}
\pt_q\Phi_{ijkl}= \del_q\Phi_{ijkl}+ \Gamma^m_{qi}\Phi_{mjkl}+\Gamma^m_{qj}\Phi_{imkl}+\Gamma^m_{qk}\Phi_{ijml}+\Gamma^m_{ql}\Phi_{ijkm}.
\end{align}
We recall that in Riemannian normal coordinates we have
\begin{align*}
g_{ij}=\delta_{ij},\ \ \ \Gamma^i_{jk}=0\ \ \ \ \text{at\ the\ point}\ x
\end{align*}
and hence, evaluating \eqref{eq:tayloraux1} at the point $x$ and using \eqref{Tdefneqn}, we get
\begin{align*}
\pt_q\Phi_{ijkl}= T_{q;im}\s7_{mjkl}+T_{q;jm}\s7_{imkl}+T_{q;km}\s7_{ijml}+T_{q;lm}\s7_{ijkm}    
\end{align*}
which is the linear term in \eqref{eq:taylorspin7}. For the second order term, we take the partial derivative of \eqref{eq:tayloraux1} to get
\begin{align}\label{eq:tayloraux2}
\pt_p\pt_q\Phi_{ijkl}&= \pt_p\del_q\Phi_{ijkl} + (\pt_p\Gamma^m_{qi})\Phi_{mjkl}+(\pt_p\Gamma^m_{qj})\Phi_{imkl}+(\pt_p\Gamma^m_{qk})\Phi_{ijml}+(\pt_p\Gamma^m_{ql})\Phi_{ijkm} + (\text{terms\ with}\ \Gamma'\text{s}).
\end{align}
The  terms with $\Gamma'$s in \eqref{eq:tayloraux2} will vanish when we evaluate at $x$. We also recall that in Riemannian normal coordinates centred at $x$, we have
\begin{align*}
\pt_i\Gamma^l_{jk}=\frac 12 \left(R^l_{ijk}+R^l_{ikj}\right)\ \ \text{at}\ x
\end{align*}
and hence the terms with partial derivatives of $\Gamma$ in \eqref{eq:tayloraux2} become
\begin{align*}
\sum_{i, j, k, l\  \text{cyclic}} \frac{1}{3}(R^m_{pqi}+R^m_{piq})\Phi_{mjkl}\ \ \ \text{at}\ x.	
\end{align*}
Similarly, we have
\begin{align*}
\pt_p\del_q\Phi_{ijkl}= \del_p\del_q\Phi_{ijkl} + (\text{terms\ with\ }\Gamma'\text{s})
\end{align*}
which at the point $x$ become
\begin{align*}
\pt_p\pt_q\Phi_{ijkl}=\del_p(T_{q;im}\s7_{mjkl}+T_{q;jm}\s7_{imkl}+T_{q;kp}\s7_{ijml}+T_{q;lm}\s7_{ijkm}  ).
\end{align*}
Overall, \eqref{eq:tayloraux2}, at the point $x$, becomes
\begin{align*}
(\pt_p\pt_q\Phi_{ijkl})\left.\right|_{0}&= \del_p(T_{q;im}\s7_{mjkl}+T_{q;jm}\s7_{imkl}+T_{q;km}\s7_{ijml}+T_{q;lm}\s7_{ijkm})+\sum_{i, j, k, l\  \text{cyclic}} \frac{1}{3}(R^m_{pqi}+R^m_{piq})\Phi_{mjkl}\\
&=\del_pT_{q;im}\Phi_{mjkl}+\del_pT_{q;jm}\Phi_{imkl}+\del_pT_{q;km}\Phi_{ijml}+\del_pT_{q;lm}\Phi_{ijkm} \\
& \qquad + T_{q;im}(T_{p;ms}\s7_{sjkl}+T_{p;js}\s7_{mskl}+T_{p;ks}\s7_{mjsl}+T_{p;ls}\s7_{mjks})\\
 &\qquad + T_{q;jm}(T_{p;is}\s7_{smkl}+T_{p;ms}\s7_{iskl}+T_{p;ks}\s7_{imsl}+T_{p;ls}\s7_{imks})\\
 & \qquad + T_{q;km}(T_{p;is}\s7_{sjml}+T_{p;js}\s7_{isml}+T_{p;ms}\s7_{ijsl}+T_{p;ls}\s7_{ijms})\\
   &\qquad + T_{q;lm}(T_{p;is}\s7_{sjkm}+T_{p;js}\s7_{iskm}+T_{p;ks}\s7_{ijsm}+T_{p;ms}\s7_{ijks})\\
   & \quad +\sum_{i, j, k, l\  \text{cyclic}} \frac{1}{3}(R^m_{pqi}+R^m_{piq})\Phi_{mjkl}.
\end{align*}
We multiply the  last  expression by $\frac 12x^px^q$ and sum over $p, q$ which causes the first curvature term to vanish because of the symmetry of the Riemann curvature tensor and we get
\begin{align*}
\frac 12 (\pt_p\pt_q\Phi_{ijkl})\left.\right|_{0} x^px^q = Q_{pqijkl}x^px^q,
\end{align*}
with the quadratic term $Q$ given by \eqref{eq:quadtaylor}. This completes the proof of the theorem.
\end{proof}

Having obtained the Taylor series expansion of $\Phi$, we now follow the same procedure in \textsection~\ref{sec:motivation} to obtain the Laplacian of the components of the Spin(7)-structure $\Phi$ at the point $x$. We contract on $p$ and $q$ in the quadratic term $Q$ in \eqref{eq:quadtaylor}. We have
\begin{align}
(\Delta \Phi)_{ijkl}&=\frac 12 (\Div T \diamond \Phi)_{ijkl} + \frac 12 \left(T_{p;is}(T_p\diamond \Phi)_{sjkl}+T_{p;js}(T_p\diamond \Phi)_{iskl}+T_{p;ks}(T_p\diamond \Phi)_{ijsl}+T_{p;ls}(T_p\diamond \Phi)_{ijks}\right) \nonumber \\
& \quad - \frac 16(\Ric \diamond \Phi)_{ijkl}.
\end{align}

We multiply the Ricci terms by $6$ (so that the evolution of the underlying metric is precisely the Ricci flow) to give the following definition.

\begin{definition}\label{defn:rhfspin7}
	Let $(M^8, \s7_0)$ be a compact manifold with a Spin(7)-structure $\s7_0$. The \emph{Ricci-harmonic flow} for the family of Spin(7)-structures $\s7(t)$ is the following initial value problem
	\begin{align} 
		\label{eq:rhfspin7} 
		\left\{\begin{array}{rl} 
			& \dfrac{\pt \s7}{\pt t} = \left(-\Ric+(T*T)+\frac 12 \Div T \right) \diamond \s7, \\
			& \s7(0) =\s7_{0},
			\tag{SRHF}
		\end{array}\right.
	\end{align}
where the term $T*T$ is a lower-order term and it is explicitly given by	
\begin{align*}
(T*T \diamond \s7)_{ijkl} &= \frac 12 \left(T_{p;is}(T_p\diamond \Phi)_{sjkl}+T_{p;js}(T_p\diamond \Phi)_{iskl}+T_{p;ks}(T_p\diamond \Phi)_{ijsl}+T_{p;ls}(T_p\diamond \Phi)_{ijks}\right).
\end{align*}	
\end{definition}

We see that the flow in \eqref{eq:rhfspin7} is different from the negative gradient flow of the $L^2$-norm of the torsion functional on the space of all Spin(7)-structures which was introduced by the author in \cite{dwivedi-spin7}.  The flow \eqref{eq:rhfspin7} however gives a natural coupling of the Ricci flow of Spin(7)-structures, i.e., the flow $\pt_t\s7(t)=-\Ric(t)\diamond_t \s7(t)$ and the harmonic flow of Spin(7)-structures which is the negative gradient flow of the $L^2$-norm of the torsion functional where the variation is being taken on the set of \emph{isometric} Spin(7)-structures and the corresponding flow is given by $\pt_t\s7(t) = \Div T\diamond_t\s7(t)$. 

The highest order (in $\Phi$) terms  in the flow \eqref{eq:rhfspin7} are $-\Ric$ and $\Div T$ and the principal symbols of these operators were computed by the author in \cite[Lemma 4.4]{dwivedi-spin7}. Using them and the discussion in \cite[\textsection 4.3]{dwivedi-spin7} shows that the Ricci-harmonic flow \eqref{eq:rhfspin7} is weakly parabolic and the only failure to parabolicity is due to the diffeomorsphism invariance of the operators $-\Ric$ and $\Div T$. As a result, we can use a similar modified DeTurck's trick from \cite[\textsection 4.4]{dwivedi-spin7} to prove the well-posedness of the flow \eqref{eq:rhfspin7}. We record this observation below.

\begin{theorem}\label{thm:srhfste}
Let $(M^8, \Phi_0)$ be a compact manifold with a Spin(7)-structure $\Phi_0$. Then there exists a unique solution $\Phi(t)$ to the Ricci-harmonic flow \eqref{eq:rhfspin7} for a short time $[0, \varepsilon]$ where $\varepsilon$ depends on $\Phi_0$.
\end{theorem}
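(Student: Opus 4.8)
The plan is to mimic the well-posedness argument developed by the author for the Spin(7) harmonic/Ricci flows in \cite{dwivedi-spin7}, since the statement explicitly reduces to symbol analysis already carried out there. First I would note that the flow \eqref{eq:rhfspin7} is a flow of admissible $4$-forms, so its right-hand side, at the level of the leading symbol, is built only from the second order operators $-\Ric_{\Phi}$ and $\tfrac12\Div_{\Phi}T_{\Phi}$ acting through the $\diamond$ operator; the $T*T$ term is lower order and contributes nothing to the principal symbol. Using Proposition~\ref{prop:diaproperties1}, the image of $A\diamond\s7$ is $\Omega^4_{1}\oplus\Omega^4_7\oplus\Omega^4_{35}$, which is exactly the tangent space to the set of admissible $4$-forms (the $\Omega^4_{27}$ directions are transverse and play no role). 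So the natural setting is the linearization of the map $\Phi\mapsto(-\Ric+\tfrac12\Div T)\diamond\Phi$ restricted to variations within the admissible bundle.

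Next I would invoke \cite[Lemma 4.4]{dwivedi-spin7}, where the principal symbols of $\Ric_{\Phi}$ and $\Div_{\Phi}T_{\Phi}$ in the direction $\xi\in T_x^*M$ are computed. As in the $\G2$-case treated in \cite{dgk-flows}, these symbols are \emph{not} elliptic: the only failure of ellipticity is the subspace coming from the diffeomorphism invariance of the torsion and the Riemann curvature tensor (equivalently, the Bianchi-type identity Theorem~\ref{thm:spin7bianchi} forces a gauge degeneracy). Concretely, the symbol of $h\mapsto -\Ric$ has the standard kernel generated by $\xi\odot(\cdot)$, and the symbol of $X\mapsto\Div T$ has the matching kernel in the $\Omega^4_7$ direction. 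Modulo this gauge, on the complementary $S^2_0$ and $\Omega^4_7$ pieces the symbol of the right-hand side is (a positive multiple of) $|\xi|^2\,\mathrm{Id}$ — this is precisely the reason we inserted the factor $6$ in front of $\Ric$ so that the induced metric evolves by $-2\Ric$, and why the coefficient $\tfrac12$ in front of $\Div T$ was chosen. Hence the flow is weakly parabolic with degeneracy entirely along the diffeomorphism orbit.

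Then I would run the modified DeTurck trick from \cite[\textsection 4.4]{dwivedi-spin7}: subtract off a term of the form $\cL_{W(\Phi)}\Phi$ for a suitable first order vector field $W(\Phi)$ (the analogue of the DeTurck vector field, adapted so that it also cancels the $\Omega^4_7$ gauge degeneracy, as done there) to obtain a strictly parabolic modified flow $\pt_t\widetilde\Phi = P(\widetilde\Phi)$ whose linearization has symbol $|\xi|^2\,\mathrm{Id}$ on the whole admissible tangent space. Standard parabolic theory (e.g.\ the Banach-space contraction/fixed-point argument, or Hamilton's version of the implicit function theorem for quasilinear parabolic systems) then gives short-time existence and uniqueness for the modified flow starting from $\Phi_0$. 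Finally, pulling back by the flow of $-W$ converts a solution of the modified flow into a solution of \eqref{eq:rhfspin7}, and the usual argument — two solutions of \eqref{eq:rhfspin7} produce, after solving the associated harmonic-map-type ODE/PDE for the gauge diffeomorphisms, two solutions of the strictly parabolic modified flow with the same initial data, hence equal — yields uniqueness. This gives the desired $\Phi(t)$ on some $[0,\varepsilon]$ with $\varepsilon$ depending on $\Phi_0$.

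The main obstacle is purely bookkeeping rather than conceptual: one must verify carefully that the $T*T$ correction and all the other lower-order terms genuinely do not affect the principal symbol and that the DeTurck modification from \cite{dwivedi-spin7}, which was designed for the $L^2$-gradient flow there, still applies verbatim to our slightly different combination $-\Ric+\tfrac12\Div T$ with its particular constants — i.e.\ that the gauge-fixing vector field can still be chosen to cancel both the $S^2_0$-gauge and the $\Omega^4_7$-gauge degeneracies simultaneously. Since the leading-order structure $(-\Ric,\Div T)$ is identical to that of the flow in \cite{dwivedi-spin7} up to harmless positive constants, this verification is routine, and the theorem follows.
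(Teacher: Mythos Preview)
Your proposal is correct and follows essentially the same approach as the paper: the paper likewise observes that the highest-order terms are $-\Ric$ and $\Div T$, invokes the principal symbol computations from \cite[Lemma~4.4]{dwivedi-spin7} and the weak parabolicity discussion in \cite[\textsection 4.3]{dwivedi-spin7}, and then appeals to the modified DeTurck trick of \cite[\textsection 4.4]{dwivedi-spin7} to conclude. Your write-up is in fact more detailed than the paper's own sketch, which simply records the theorem after outlining these ingredients.
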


We remark that all the results about the regularity of solutions, compactness theorem and long-time existence results obtained in \textsection~\ref{sec:evoleqns}-\ref{sec:ltecompact} go through for the Ricci-harmonic flow of Spin(7)-structures \eqref{eq:rhfspin7} with similar proofs.

\section{Questions for future}\label{sec:questions}
We compile a list of interesting future problems related to the Ricci-harmonic flow. 
\begin{question}
Can one use an Uhlenbeck-type trick and maybe some modified connection as in the Ricci flow case \cite{hamilton-4manifolds} or in the isometric flow of $\G2$-structures case \cite[\textsection 4 ]{dgk-isometric} to simplify the evolution equation \eqref{eq:Riemevol1} of the Riemann curvature tensor and the evolution equation \eqref{eq:torevol} of the torsion tensor along the flow? Such a simplification will shed light on possible applications of Hamilton's tensor maximum principle to prove preserved curvature and torsion conditions along the flow?
\end{question}

\begin{question}
Do we get interesting flows when we do dimensional reduction of the flow? For instance, what flows of $\mathrm{SU}(3)$-structures does the Ricci-harmonic flow induce when $M^7$ is of the form $M^6\times S^1$ with $M^6$ being endowed with a $\mathrm{SU}(3)$-structure. Similar questions can be asked for $\mathrm{SU}(2)$-structures on $M^4$ with $M^7=M^4\times T^3$. For instance, it would be interesting to study dimensional reduction of the flows considered in this paper following the approach of Picard--Suan \cite{picard-suan}.
\end{question}

\begin{question}
Are there other explicit solutions, for instance, shrinking solitons in the compact case and any type of soliton in the noncompact case, of the Ricci-harmonic flow?
\end{question}

\begin{question}
What is the behaviour of torsion-free $\G2$-structures as stationary points of the Ricci-harmonic flow? Are they stable stationary points or are there unstable directions in the variation? 
\end{question}

\appendix
\section{Taylor series expansion of the dual $4$-form and associated heat equation}\label{sec:appendix}
In this appendix, we compute the Taylor series expansion of the Hodge dual $\psi$ of a $\G2$-structure $\g2$ on a given oriented $7$-manifold $M$ and use the procedure described in the paper to write the "heat equation" for $4$-forms. Let $M^7$ be a smooth oriented manifold with a \emph{given orientation}. In this case, a non-degenerate $4$-form $\psi$ induces a metric $g_{\psi}$ and the prescribed orientation with $*_{\psi} \psi = \g2$. For $x\in M$, we choose our local orthonormal frame $\{e_1,\ldots,e_7\}$ of $T_xM$ to be $\G2$-\emph{adapted} which means that at the point $x$, the components $\psi_{ijkl}$ agree with those of the standard flat model on $\bR^7$. 

\begin{theorem} \label{thm:taylor-psi}
Let $(x^1, \ldots, x^7)$ be $\G2$-adapted Riemannian normal coordinates centred at $x \in M$. The components $\psi_{ijkl}$ of $\psi$ have Taylor expansions about $0$, which is the point in $\bR^7$ corresponding to $x \in M$, given by
	\begin{equation} \label{eq:taylor-psi}
		\psi_{ijkl} (x^1, \ldots, x^7) = \psi_{ijkl} + (-T_{qi}\g2_{jkl}+T_{qj}\g2_{ikl}-T_{qk}\g2_{ijl}+T_{ql}\g2_{ijk}) x^q + \prescript{\psi}{}{\! \mathcal Q_{pq \, ijkl}} x^p x^q + O(\|x\|^3),
	\end{equation}
	where
	\begin{align} \label{eq:taylor-psi-B}
\prescript{\psi}{}{\! \mathcal Q_{pq \, ijkl}} &=\frac 12(-\del_pT_{qi}\g2_{jkl}+\del_pT_{qj}\g2_{ikl}-\del_pT_{qk}\g2_{ijl}+\del_pT_{ql}\g2_{ijk}) \nonumber \\
& \quad +\frac 12(-T_{qi}T_{pm}\psi_{mjkl}+T_{qj}T_{pm}\psi_{mikl}-T_{qk}T_{pm}\psi_{mijl}+T_{ql}T_{pm}\psi_{mijk}) \nonumber \\
& \quad  +  \frac{1}{6}(R_{piqm}\psi_{mjkl}+R_{pjqm}\psi_{imkl}+R_{pkqm}\psi_{ijml}+R_{plqm}\psi_{ijkm}).
	\end{align}
Here all coefficient tensors on the right hand side are evaluated at $0$.
\end{theorem}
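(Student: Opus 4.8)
The plan is to mirror the proof of Theorem~\ref{thm:taylor-g2} (i.e.\ \cite[Thm.~2.25]{dgk-flows}) and of Theorem~\ref{thm:taylorspin7}, working directly with the components of $\psi$ in $\G2$-adapted Riemannian normal coordinates centred at $x$. The constant term $\psi_{ijkl}$ is immediate from the choice of $\G2$-adapted orthonormal frame at $x$. For the higher-order terms I would start from the identity relating partial and covariant derivatives,
\begin{align*}
\pt_q\psi_{ijkl} = \del_q\psi_{ijkl} + \Gamma^m_{qi}\psi_{mjkl} + \Gamma^m_{qj}\psi_{imkl} + \Gamma^m_{qk}\psi_{ijml} + \Gamma^m_{ql}\psi_{ijkm},
\end{align*}
and use that at $x$ we have $g_{ij}=\delta_{ij}$ and $\Gamma^i_{jk}=0$. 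Hence $\pt_q\psi_{ijkl}\big|_0 = \del_q\psi_{ijkl}$, and substituting \eqref{eq:delpsi} gives the linear term $-T_{qi}\g2_{jkl}+T_{qj}\g2_{ikl}-T_{qk}\g2_{ijl}+T_{ql}\g2_{ijk}$ as claimed.

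For the quadratic term I would differentiate the displayed identity once more,
\begin{align*}
\pt_p\pt_q\psi_{ijkl} = \pt_p\del_q\psi_{ijkl} + (\pt_p\Gamma^m_{qi})\psi_{mjkl} + (\pt_p\Gamma^m_{qj})\psi_{imkl} + (\pt_p\Gamma^m_{qk})\psi_{ijml} + (\pt_p\Gamma^m_{ql})\psi_{ijkm} + (\text{terms with }\Gamma\text{'s}),
\end{align*}
and evaluate at $x$. The explicit $\Gamma$-quadratic terms vanish there; the terms $\pt_p\Gamma^m_{q\bullet}$ are replaced by curvature via the normal-coordinate identity $\pt_i\Gamma^l_{jk}\big|_0 = \tfrac12(R^l_{\phantom{l}ijk}+R^l_{\phantom{l}ikj})$, and $\pt_p\del_q\psi_{ijkl}\big|_0 = \del_p\del_q\psi_{ijkl}$. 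The latter I would compute by differentiating \eqref{eq:delpsi}, using the product rule and $\del_p\g2_{jkl}=T_{pm}\psi_{mjkl}$ from \eqref{eq:delphi} on each of the four $\g2$-factors: this produces the $\tfrac12(-\del_pT_{qi}\g2_{jkl}+\cdots)$ block and the $\tfrac12(-T_{qi}T_{pm}\psi_{mjkl}+\cdots)$ block. Finally, multiplying by $\tfrac12 x^px^q$ and summing over $p,q$ symmetrises in $p,q$, which kills the part of $\pt_p\Gamma$ antisymmetric in $p,q$ and leaves exactly $\tfrac16(R_{piqm}\psi_{mjkl}+R_{pjqm}\psi_{imkl}+R_{pkqm}\psi_{ijml}+R_{plqm}\psi_{ijkm})$, giving \eqref{eq:taylor-psi-B}.

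The computation is routine but the main point requiring care is the index and sign bookkeeping: unlike $\g2$, the $4$-form $\psi$ is totally antisymmetric in four indices, so the alternating sign pattern $(-,+,-,+)$ must be tracked consistently through each of the four Christoffel and four $\del\g2$ substitutions, and one must check that the curvature contributions reorganise into the stated symmetric form after the $\tfrac12 x^px^q$ symmetrisation. I would also remark that an alternative route --- expanding $\psi=*_\g2\g2$ using Theorem~\ref{thm:taylor-g2} together with the Taylor expansion of the metric and volume form --- is possible but considerably messier, since the Hodge star itself carries a metric-dependent expansion; the direct approach above is cleaner and self-contained.
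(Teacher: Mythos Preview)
Your proposal is correct and follows essentially the same approach as the paper's proof: both start from the identity relating $\pt_q\psi_{ijkl}$ and $\del_q\psi_{ijkl}$, use the vanishing of the Christoffel symbols at the centre together with \eqref{eq:delpsi} for the linear term, then differentiate once more and combine the normal-coordinate identity $\pt_i\Gamma^l_{jk}\big|_0=\tfrac12(R^l_{\phantom{l}ijk}+R^l_{\phantom{l}ikj})$ with the product-rule expansion of $\del_p\del_q\psi_{ijkl}$ (via \eqref{eq:delphi}) before symmetrising in $p,q$. Your remark that the Hodge-star route is messier is a reasonable aside but not part of the paper's argument.
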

\begin{proof}
We follow the same ideas as in the proof of \cite[Thm. 2.25]{dgk-flows} or Theorem~\ref{thm:taylorspin7}. We start with
\begin{align*}
\pt_q\psi_{ijkl}=\del_q\psi_{ijkl}+\Gamma^{m}_{qi}\psi_{mjkl}+\Gamma^{m}_{qj}\psi_{imkl}+\Gamma^{m}_{qk}\psi_{ijml}+\Gamma^{m}_{ql}\psi_{ijkm},
\end{align*}
which on using the fact that the $\Gamma$s vanish at the point $x$ in normal coordinates and \eqref{eq:delpsi} imply
\begin{align}\label{eq:taypsi1}
\pt_q\psi_{ijkl}=-T_{qi}\g2_{jkl}+T_{qj}\g2_{ikl}-T_{qk}\g2_{ijl}+T_{ql}\g2_{ijk},
\end{align}
which gives the first order terms in \eqref{eq:taylor-psi}. In the same way as in the proof of the Taylor series expansion of the Spin(7)-structure, we have, at the point $x$,
\begin{align}
(\pt_p\pt_q \psi)_{ijkl}&=\del_p(-T_{qi}\g2_{jkl}+T_{qj}\g2_{ikl}-T_{qk}\g2_{ijl}+T_{ql}\g2_{ijk}) + \sum_{i, j, k, l\  \text{cyclic}} \frac{1}{3}(R^m_{pqi}+R^m_{piq})\psi_{mjkl}\nonumber \\
&=-\del_pT_{qi}\g2_{jkl}+\del_pT_{qj}\g2_{ikl}-\del_pT_{qk}\g2_{ijl}+\del_pT_{ql}\g2_{ijk} \nonumber \\
& \quad -T_{qi}T_{pm}\psi_{mjkl}+T_{qj}T_{pm}\psi_{mikl}-T_{qk}T_{pm}\psi_{mijl}+T_{ql}T_{pm}\psi_{mijk} + \sum_{i, j, k, l\  \text{cyclic}} \frac{1}{3}(R^m_{pqi}+R^m_{piq})\psi_{mjkl}.\label{eq:taypsi2}
\end{align}
We multiply \eqref{eq:taypsi2} by $\frac 12 x^{p}x^{q}$ and sum over $p, q$ to obtain the expression for $\mathcal{Q}$ in \eqref{eq:taylor-psi-B}.
\end{proof}

As in \textsection~\ref{sec:motivation}, we compute "$(\Delta \psi)_{ijkl}$" at the point $x$ by contracting on $p$ and $q$ in \eqref{eq:taylor-psi-B}. We get
\begin{align}
(\Delta \psi)_{ijkl}&=-\frac 16 (R_{im}\psi_{mjkl}+R_{jm}\psi_{imkl}+R_{km}\psi_{ijml}+R_{lm}\psi_{ijkm}) \nonumber \\
& \quad + \frac 12 (-T_{pi}T_{pm}\psi_{mjkl}-T_{pj}T_{pm}\psi_{imkl}-T_{pk}T_{pm}\psi_{ijml}-T_{pl}T_{pm}\psi_{ijkm}) \nonumber \\
& \quad +\frac 12 (-\Div T_i\g2_{jkl}+\Div T_j\g2_{ikl}-\Div T_k\g2_{ijl}+\Div T_l\g2_{ijk}) \nonumber \\
& = \left(-\frac 16 \Ric -\frac 12 T^tT\right)\diamond \psi -\frac 12 (\Div T \wedge \g2)_{ijkl}. \label{eq:lappsi}
\end{align}

It would be interesting to see if the flow of $4$-forms which one can define using \eqref{eq:lappsi} as the velocity has any nice properties as compared to other flows of $4$-forms. Using the expression \eqref{eq:scalarcurv} for the scalar curvature, it's easy to see that the stationary points of the flow defined using \eqref{eq:lappsi} will be exactly torsion-free $\G2$-structures.



	
	\printbibliography
	
	\noindent
	Fachbereich Mathematik, Universität Hamburg, Bundesstraße 55, 20146 Hamburg, Germany.\\
	\href{mailto:shubham.dwivedi@uni-hamburg.de}{shubham.dwivedi@uni-hamburg.de}

\end{document}